\newcommand{\bigzero}{\mbox{\normalfont\Large\bfseries 0}}
\theoremstyle{plain}
\newtheorem{theorem}{Theorem}[section]
\newtheorem{corollary}[theorem]{Corollary}
\newtheorem{lemma}[theorem]{Lemma}
\theoremstyle{definition}
\newtheorem{example}[theorem]{Example}
\newcommand{\ignore}[1]{}
\newcommand{\etal}{{\em et al.~}}
\newcommand{\RR}{\mathds{R}}
\newcommand{\ZZ}{\mathds{Z}}
\newcommand{\QQ}{\mathds{Q}}
\newcommand{\ii}{\mathrm{i}}
\newcommand{\vv}{\mathbf{v}}
\newcommand{\0}{\mathbf{0}}
\newcommand{\1}{\mathbf{1}}
\newcommand{\wt}[1]{\widetilde{#1}}
\newcommand{\md}[1]{\ (\operatorname{mod}   #1)}
\newcommand{\cart}{%
 \ \text{\fboxsep=0.5pt\fbox{\rule[0.5pt]{0pt}{0.75 ex}\rule[0.5pt]{0.75 ex}{0pt}}} \ %
}
\DeclarePairedDelimiter{\abs}{\vert}{\vert}
\DeclareMathOperator{\col}{col}
\DeclareMathOperator{\one}{\mathbf{1}}
\newcommand\pmat[1]{\begin{pmatrix} #1 \end{pmatrix}}
\newcommand\comp[1]{{\mkern2mu\overline{\mkern-2mu#1}}}
\DeclareMathOperator{\ecc}{ecc}
\title{Laplacian Fractional Revival on Graphs}
\author[1]{Ada Chan}
\author[2]{Bobae Johnson}
\author[3]{Mengzhen Liu}
\author[4]{Malena Schmidt}
\author[5]{Zhanghan Yin}
\author[1]{Hanmeng Zhan}
\affil[1]{Department of Mathematics and Statistics, York University}
\affil[2]{Departments of Mathematics and Physics, Harvard University}
\affil[3]{Department of Pure Mathematics and Mathematical Statistics, University of Cambridge}
\affil[4]{Department of Computer Science, University of Warwick  }
\affil[5]{Department of Mathematics, University of Toronto}
\date{}
\begin{document}
\maketitle

\begin{abstract}
We develop the theory of fractional revival in the quantum walk on a graph using its Laplacian matrix as the Hamiltonian. We first give a spectral characterization of Laplacian fractional revival, which leads to a polynomial time algorithm to check this phenomenon and find the earliest time when it occurs. We then apply the characterization theorem to special families of graphs. In particular, we show that no tree admits Laplacian fractional revival except for the paths on two and three vertices, and the only graphs on a prime number of vertices that admit Laplacian fractional revival are double cones. Finally, we construct, through Cartesian products and joins, several infinite families of graphs that admit Laplacian fractional revival; some of these graphs exhibit polygamous fractional revival.
\end{abstract}

\tableofcontents

%%%%%%%%%%%%%%%%%%%%%%%%%%%%%%%%%%%%%%%%%%%%%%%%%%%%%%%%%%%%%%%%%%%%%%%%%%%%%%
%%%%%%%%%%%%%%%%%%%%%%%%%%%%%%%%%%%%%%%%%%%%%%%%%%%%%%%%%%%%%%%%%%%%%%%%%%%%%%
%%%%%%%%%%%%%%%%%%%%%%%%%%%%%%%%%%%%%%%%%%%%%%%%%%%%%%%%%%%%%%%%%%%%%%%%%%%%%%
%%%%%%%%%%%%%%%%%%%%%%%%%%%%%%%%%%%%%%%%%%%%%%%%%%%%%%%%%%%%%%%%%%%%%%%%%%%%%%

\section{Introduction}
A quantum walk on a graph $X$ is determined by the transition operator
\[U(t)=\exp(itH),\]
where $H$ is some Hermitian matrix indexed by the vertices. As noted by Bose \etal\cite{bose}, the two common quantum walks, where $H$ is the adjacency or Laplacian matrix of $X$, are related to spin networks in the XY and XYZ interaction models, respectively.

We say $X$ admits \textsl{fractional revival} at time $\tau$ from vertex $a$ to vertex $b$ if 
\[U(\tau)=\alpha e_a + \beta e_b\]
for some complex numbers $\alpha$ and $\beta$. Fractional revival, although rare, is a useful phenomenon in transmitting quantum information \cite{b03}. One of its special cases, perfect state transfer, has been extensively studied from both physical and combinatorial viewpoints. In particular, one can decide adjacency perfect state transfer in polynomial time \cite{PolyTime}, using the spectral information of $A$. There are also characterizations of Laplacian perfect state transfer or adjacency fractional revival on graphs that are combinatorially interesting. For example, Coutinho and Liu \cite{LaPST} ruled out trees on more than two vertices for Laplacian perfect state transfer, and Chan \etal \cite{FRschemes} characterized several families of distance regular graphs with adjacency fractional revival.

Laplacian fractional revival, on the other hand, has not been studied in great detail. There is a characterization of threshold graphs that admit Laplacian fractional revival, due to Kirkland and Zhang \cite{LaFR}, but a general framework, which allows us to investigate this phenomenon systematically, is missing from the literature. 

In this paper, we prove necessary and sufficient conditions (Theorem \ref{CharThm}) for proper Laplacian fractional revival to occur on a general graph. As a consequence, we arrive at a polynomial time algorithm (Theorem \ref{thm:polytime}) that decides Laplacian fractional revival, and if so, finds the earliest time when it occurs. 

We then proceed to determine Laplacian fractional revival on special classes of graphs, including trees (Section \ref{sec:trees}) and join graphs (Section \ref{Section:Joins}). In particular, we prove no tree on more than three vertices admits proper Laplacian fractional revival (Theorem \ref{thm:notree}), which implies the result in \cite{LaPST}. We also show that every graph on a prime number of vertices that admits proper Laplacian fractional revival is a double cone (Theorem \ref{thm:primedbcone}). 

Finally, we use Cartesian products and joins to construct several infinite families of graphs (Theorem \ref{BasicDBCone}, Theorem \ref{thm:infjoin} and Theorem \ref{thm:polygamy}) with proper fractional revival. On the last family, this phenomenon behaves somewhat surprisingly, as it occurs from $a$ to $b$ and from $a$ to $c$ with $b\ne c$. This is in stark contrast to perfect state transfer, for which $a$ must be paired with a unique vertex. As these graphs are regular, they are also examples of unweighted graphs that admit polygamous adjacency fractional revival, which answers an open question in \cite{FRgraphs}.

\section{The Laplacian matrix}
Let $X$ be a graph. Let $A$ be its adjacency matrix, and let $D$ be the diagonal matrix whose $vv$-entry is the degree of the vertex $v$. The \textsl{Laplacian matrix} of $X$, denoted $L$, is given by
\[L = D - A.\]

Our main tool is the spectral decomposition of $L$. In this section, we list some useful facts about $L$; for proofs, see Godsil and Royle \cite[Ch 8]{AGT}.

\begin{lemma}
The Laplacian matrix is positive semidefinite. Moreover, it has $0$ as an eigenvalue with eigenvector $\one$.
\end{lemma}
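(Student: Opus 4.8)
The plan is to prove the two assertions separately, both by direct computation.

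For the eigenvalue statement, I would simply evaluate $L\one = (D-A)\one$. The $v$-th entry of $A\one$ is the sum of the $v$-th row of $A$, which is exactly $\deg(v)$, while the $v$-th entry of $D\one$ is also $\deg(v)$ by the definition of $D$. Hence $L\one = \0$, so $0$ is an eigenvalue of $L$ with eigenvector $\one$.

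For positive semidefiniteness, I would exhibit $L$ as a Gram matrix. Fix an arbitrary orientation of the edges of $X$ and let $B$ be the corresponding vertex-by-edge incidence matrix, where the column indexed by an edge $e$ oriented from $u$ to $v$ has a $+1$ in row $u$, a $-1$ in row $v$, and $0$ elsewhere. A short check of the entries of $BB^{\top}$ — its diagonal entry at $v$ counts the edges incident to $v$, namely $\deg(v)$, and its off-diagonal entry at $(u,v)$ equals $-1$ if $uv$ is an edge and $0$ otherwise — shows that $L = BB^{\top}$. Consequently $\xx^{\top} L \xx = \norm{B^{\top}\xx}^2 \ge 0$ for every real vector $\xx$, so $L$ is positive semidefinite. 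Alternatively, one can bypass the incidence matrix and observe directly that $\xx^{\top} L \xx = \sum_{uv \in E(X)} (x_u - x_v)^2$, which is manifestly nonnegative and, taking $\xx = \one$, recovers the fact that the quadratic form vanishes on $\one$.

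There is essentially no obstacle here; the only points requiring a little care are the sign bookkeeping in the incidence matrix and the standing assumption that $X$ is a simple graph, so that no loops contribute to the diagonal of $A$. Since these verifications are routine, the statement is quoted from \cite{AGT} rather than proved in the text.
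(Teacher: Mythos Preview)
Your argument is correct and is exactly the standard one; in fact the paper gives no proof of this lemma at all, simply citing \cite[Ch~8]{AGT}, as you yourself note in your final paragraph. Your use of the factorization $L = BB^{\top}$ even anticipates the paper's next-but-one lemma, which records that identity separately (again without proof).
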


Next, we cite two upper bounds on the Laplacian eigenvalues due to Anderson and Morley \cite{Levbd}.

\begin{theorem}\cite{Levbd}
\label{Levbd}
Let $X$ be a connected graph on $n$ vertices whose largest Laplacian eigenvalue is $\mu$. Then $\mu\le n$ and
\[\mu\le \max\{\deg(u)+\deg(v): u\sim v\}.\]
Moreover, the multiplicity of $n$ is one less than the number of components of the complement $\comp{X}$.
\end{theorem}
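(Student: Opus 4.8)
I would treat the three assertions separately, since each rests on a different standard device.

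For the bound $\mu \le \max\{\deg(u)+\deg(v): u\sim v\}$, the plan is to pass from $L$ to its ``edge version''. Fix an arbitrary orientation of the edges of $X$ and let $B$ be the corresponding oriented incidence matrix, so that the column of $B$ indexed by an edge $e$ with tail $u$ and head $v$ is $\ee_v - \ee_u$; then $L = BB^\top$. Since $BB^\top$ and $B^\top B$ have the same nonzero eigenvalues, $\mu = \lambda_{\max}(B^\top B)$ (the case with no edges being trivial, as then $n=1$). Now $B^\top B$ is indexed by the edges, with $(B^\top B)_{ee}=2$ and $(B^\top B)_{ef}\in\{-1,0,1\}$, the entry being nonzero exactly when the distinct edges $e,f$ share an endpoint. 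An edge $e=uv$ shares an endpoint with precisely $\deg(u)+\deg(v)-2$ other edges, so its absolute row sum in $B^\top B$ is $2+(\deg(u)+\deg(v)-2)=\deg(u)+\deg(v)$, and Gershgorin's theorem gives $\mu = \lambda_{\max}(B^\top B)\le \max_{uv\in E(X)}(\deg(u)+\deg(v))$. I expect this to be the main obstacle, in the sense that it is the one place where working with $L$ itself does not suffice: Gershgorin applied directly to $L$ only yields $\mu\le 2\max_v \deg(v)$, so the key move is to switch to the incidence/line-graph picture before estimating.

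For $\mu\le n$ and the multiplicity statement, I would use the complement identity $L(X)+L(\comp{X})=nI-J$. Both $L(X)$ and $L(\comp{X})$ fix $\one$ with eigenvalue $0$, and on the invariant subspace $\one^\perp$ we have $J=0$, hence $L(X)|_{\one^\perp}=nI-L(\comp{X})|_{\one^\perp}$. Since $L(\comp{X})$ is positive semidefinite (the first lemma of this section), every eigenvalue of $L(X)$ on $\one^\perp$ is at most $n$; together with the eigenvalue $0$ on $\spn\{\one\}$, this gives $\mu\le n$. For the multiplicity: as $n\neq 0$, every $n$-eigenvector of $L(X)$ lies in $\one^\perp$, and by the displayed identity these are exactly the vectors of $\ker L(\comp{X})\cap\one^\perp$. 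Now $\ker L(\comp{X})$ is spanned by the characteristic vectors of the components of $\comp{X}$, and $\one$ is their sum, so $\dim\bigl(\ker L(\comp{X})\cap\one^\perp\bigr)$ is one less than the number of components of $\comp{X}$, which is the claimed multiplicity of $n$.

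Since this is the classical theorem of Anderson and Morley, I would in practice simply cite it; but the two paragraphs above constitute a self-contained proof, modulo the routine verification of the entries of $B^\top B$ and of the kernel of $L(\comp{X})$.
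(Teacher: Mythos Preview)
The paper does not prove this theorem; it is stated with a citation to Anderson and Morley and used as a black box. Your proposal is therefore not being compared against any argument in the paper. That said, your proof is correct and is essentially the classical one: the Anderson--Morley bound is obtained exactly by passing to $B^\top B$ and applying Gershgorin, and the $\mu\le n$ bound together with the multiplicity statement follow from the complement identity $L(X)+L(\comp{X})=nI-J$ and positive semidefiniteness of $L(\comp{X})$. Your closing remark that in practice one would simply cite the result is precisely what the paper does.
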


Given an orientation of $X$, the \textsl{signed incidence matrix}, denoted $B$, is the matrix whose rows are indexed by the vertices and columns by the edges with
\[B_{v,e}=\begin{cases}
1,&\text{if $v$ is the head of the edge $e$}\\
-1,& \text{if $v$ is the tail of edge $e$}\\
0,& \text{otherwise}
\end{cases}\]

The following lemma shows a connection between the Laplacian matrix and the signed incidence matrices of a graph.

\begin{lemma}
Let $X$ be a graph with Laplacian matrix $L$. Let $B$ be the signed incidence matrix of any orientation of $X$. Then
\[L = BB^T.\]
\end{lemma}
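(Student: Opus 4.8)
The plan is to verify the identity $L = BB^T$ entrywise, comparing $(BB^T)_{u,v}$ against the definition $L_{u,v} = (D-A)_{u,v}$ for every pair of vertices $u,v$. Expanding the matrix product, $(BB^T)_{u,v} = \sum_e B_{u,e} B_{v,e}$, where the sum ranges over the edges of $X$, so the whole argument reduces to determining which edges contribute to this sum and with what sign.

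First I would treat the diagonal entries. Fixing a vertex $u$, the term $B_{u,e} B_{u,e} = B_{u,e}^2$ equals $1$ precisely when $u$ is an endpoint (head or tail) of $e$, and $0$ otherwise; note this is already independent of the chosen orientation. Hence $(BB^T)_{u,u}$ counts the edges incident to $u$, which is $\deg(u) = D_{u,u} = L_{u,u}$.

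Next I would treat the off-diagonal entries. For $u \neq v$, a term $B_{u,e} B_{v,e}$ is nonzero only when both $u$ and $v$ are endpoints of $e$, that is, when $e$ is the (unique, since $X$ is simple) edge $uv$; in that case exactly one of $u,v$ is the head of $e$ and the other is the tail, so the product is $(1)(-1) = -1$ regardless of how $e$ was oriented. Therefore $(BB^T)_{u,v} = -1$ when $u \sim v$ and $0$ otherwise, which is exactly $-A_{u,v} = L_{u,v}$. Combining the two cases gives $BB^T = D - A = L$.

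There is no genuine obstacle here; the only point deserving a moment's care is the observation that each contributing product is independent of the orientation — the diagonal contributions are squares, and each off-diagonal contribution comes from a head/tail pair and so equals $-1$ — which is precisely why the identity holds for \emph{any} orientation of $X$.
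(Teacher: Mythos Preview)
Your proof is correct and is the standard entrywise verification. The paper does not actually supply its own proof of this lemma; it states it as a background fact and refers the reader to Godsil and Royle \cite[Ch~8]{AGT}, where essentially the same argument you give appears.
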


Finally, we state a well-known result called the Matrix-Tree Theorem. Given a matrix $M$, we let $M[u|v]$ be the matrix obtained from $M$ by deleting the $u$-th row and $v$-th column.

\begin{theorem}[Matrix-Tree Theorem]\label{mattree}
    Let $X$ be a graph on $n$ vertices with Laplacian matrix $L$. Let $q$ be the number of spanning trees of $X$. If $u$ is a vertex of $X$, then $\det(L[u|u])=q$. Moreover, $nq$ equals the product of non-zero eigenvalues of $L$.
\end{theorem}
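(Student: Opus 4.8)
The plan is to derive both assertions from the factorization $L = BB^{T}$ established above, where $B$ is the signed incidence matrix of some fixed orientation of $X$ with, say, $m$ columns (one per edge); the first assertion comes from the Cauchy--Binet formula, and the second from a coefficient comparison in the characteristic polynomial.

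\medskip
\noindent\emph{Step 1: $\det(L[u|u]) = q$.} Deleting row and column $u$ from $L = BB^{T}$ is the same as first deleting row $u$ from $B$: writing $B_{u}$ for the resulting $(n-1)\times m$ matrix, we get $L[u|u] = B_{u}B_{u}^{T}$. By Cauchy--Binet,
\[
\det(L[u|u]) = \sum_{S}\bigl(\det B_{u}[S]\bigr)^{2},
\]
the sum being over all $(n-1)$-element subsets $S$ of the edge set, with $B_{u}[S]$ the square submatrix of $B_{u}$ on the columns indexed by $S$. The heart of the argument is the claim that $\det B_{u}[S] = \pm 1$ when the edges of $S$ form a spanning tree of $X$, and $\det B_{u}[S] = 0$ otherwise. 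A set of $n-1$ edges on $n$ vertices is either a spanning tree or contains a cycle; if it contains a cycle, an appropriate $\pm 1$ combination of the cycle's columns in $B$ is zero, so the columns of $B_{u}[S]$ are linearly dependent and $\det B_{u}[S] = 0$. If $S$ is a spanning tree, I would induct on $n$: the tree $S$ has a leaf $w \neq u$ incident to a unique edge $e$, so the $w$-row of $B_{u}[S]$ has a single nonzero entry $\pm 1$ in column $e$; cofactor expansion along this row reduces $\det B_{u}[S]$ to $\pm 1$ times the analogous determinant for the tree $S - w$ on $n-1$ vertices, with the base case $n=1$ being the empty determinant $1$. Summing over $S$ then counts spanning trees, so $\det(L[u|u]) = q$.

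\medskip
\noindent\emph{Step 2: the spectral formula.} Let $0 = \mu_{1}, \mu_{2}, \dots, \mu_{n}$ be the Laplacian eigenvalues, where $0$ is an eigenvalue by the first lemma of this section. Consider the coefficient of $x$ in $\det(xI - L) = \prod_{i=1}^{n}(x - \mu_{i})$. Expanding the product, this coefficient is $(-1)^{n-1}\sum_{i=1}^{n}\prod_{j\neq i}\mu_{j}$; since $\mu_{1} = 0$, every term with $i \neq 1$ vanishes, leaving $(-1)^{n-1}\mu_{2}\cdots\mu_{n}$. Computing the same coefficient from the matrix side, it equals $(-1)^{n-1}$ times the sum of all $(n-1)\times(n-1)$ principal minors of $L$, i.e. $(-1)^{n-1}\sum_{u}\det(L[u|u]) = (-1)^{n-1}nq$ by Step 1. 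Hence $nq = \mu_{2}\cdots\mu_{n}$; for connected $X$ the numbers $\mu_{2},\dots,\mu_{n}$ are exactly the non-zero eigenvalues, which is the assertion (and for disconnected $X$ both sides are $0$).

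\medskip
The only part that is not pure bookkeeping is the determinant evaluation in Step 1 --- that every $(n-1)\times(n-1)$ submatrix of $B$ indexed by a spanning tree has determinant $\pm 1$, and every other such submatrix is singular; once this is in hand, the Cauchy--Binet computation and the symmetric-function identities used in Step 2 are routine.
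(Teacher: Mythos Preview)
Your proof is correct and follows the standard Cauchy--Binet argument. Note, however, that the paper does not actually prove this theorem: it is stated without proof as a well-known result (the section refers to Godsil and Royle \cite{AGT} for background), so there is no ``paper's proof'' to compare against. Your approach is the classical one and would be entirely appropriate here.
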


\section{Laplacian fractional revival}
Given a Hermitian matrix $H$, a quantum walk with respect to $H$ is determined by the following matrix, called the \textsl{transition matrix}:
\[U(t)=\exp(itH).\]
Note that $U(t)$ is a unitary matrix. If, in addition, $H$ is real symmetric, then $U(t)$ is also symmetric.

There are two common choices for $H$ associated with a graph: the adjacency matrix $A$ and the Laplacian matrix $L$. In this paper we will consider the latter. However, for a $d$-regular graph, we have $L=dI-A$, and so 
\[\exp(itL) = e^{idt}\exp(-itA).\]
Therefore, some of our results apply to the adjacency model as well if the graph is regular.

We say a graph admits \textsl{Laplacian fractional revival}, or \textsl{LaFR}, from vertex $a$ to vertex $b$ at time $\tau$ if the transition matrix with respect to $L$ satisfies
\[U(\tau) e_a=\alpha e_a +\beta e_b\]
for some complex numbers $\alpha$ and $\beta$. As $U(t)$ is unitary, it follows that $\abs{\alpha}^2+\abs{\beta}^2=1$. Equivalently, LaFR occurs if $U(\tau)$ has the following block diagonal form:
\begin{equation}\label{def:blockFR}
    U(\tau) =\pmat{
    \begin{matrix}
        \alpha&\beta\\ \beta&\gamma
    \end{matrix}
    &\bigzero\\
    \bigzero&N}.
\end{equation}

There are some special cases of LaFR. If $\beta=0$, then the graph is said to be \textsl{Laplacian periodic} at vertex $a$ at time $\tau$; in this case, vertex $b$ does not play a role. We will refer to the case where $\beta\ne 0$ as \textsl{proper LaFR}. If, on the other hand, $\alpha=0$, then the graph is said to admit \textsl{Laplacian perfect state transfer}, or \textsl{LaPST}, from vertex $a$ to vertex $b$.

% Unless otherwise specified, we will assume our graphs are connected in this paper.
We can compute $U(t)$ using the eigenvalues and eigenprojections of $L$. Let the spectral decomposition of $L$ be
\[L=\sum_r \mu_r F_r,\]
where $\mu_r$ is an eigenvalue of $L$, and $F_r$ is the orthogonal projection onto the eigenspace of $\mu_r$. Then we have 
\[U(t)=\sum_r e^{it\mu_r} F_r.\]
This formula will be frequently used in the rest of the paper.

We list some examples that admit LaFR.

\begin{example}\label{ex:K_2}
%For $K_2$, the Laplacian matrix is
%\[L =  \pmat{1&-1\\-1&1},\]
%and the transition matrix is
%\[U(t) = \frac{1}{2} \pmat{1&1\\1&1} +  \frac{e^{- 2 \ii  t}}{2} %\pmat{1&-1\\-1&1}.\]
$K_2$ admits three types of LaFR:
\begin{enumerate}[(i)]
    \item At time $t\in \pi\ZZ$, it is Laplacian periodic at both vertices.
    \item At time $t\in \frac{\pi}{2}\ZZ$, it has LaPST from one vertex to the other.
    \item At time $t\notin \frac{\pi}{2}\ZZ$, it has proper LaFR from one vertex to the other. 
\end{enumerate}
\end{example}

\begin{example}
\ignore{For $P_3$, the Laplacian matrix is 
\[L =  \pmat{1&-1&0\\-1&2&-1\\0&-1&1}\]
and the transition matrix is
\begin{align*}
        U(t) &= \frac{e^{-3it}}{6}\pmat{
            1&-2&1\\
            -2&4&-2\\
            1&-2&1
            }+\frac{e^{-it}}{6}\pmat{
             3&0&-3\\
             0&0&0\\
            -3&0&3
           } +\frac{e^{0}}{6}\pmat{
            2&2&2\\
            2&2&2\\
            2&2&2
            }
    \end{align*}
    We get proper LaFR at time $\frac{2\pi}{3}$ between the two end vertices of $P_3$:
    \begin{align*}
        U(\frac{2\pi}{3}) &= \frac{1}{4}\pmat{
            -\sqrt{3}i+1&0&\sqrt{3}i+3\\
            0&4&0\\
            \sqrt{3}i+3&0&-\sqrt{3}i+1
            }
    \end{align*}
    and periodicity at time $2\pi$ at all vertices:
    \begin{align*}
        U(2\pi) &= \pmat{
            1&0&0\\
            0&1&0\\
           0&0&1
            }
    \end{align*}}
On $P_3$, there is proper LaFR at time $2\pi/3$ between the end vertices, and periodicity at time $2\pi$ at all vertices.

\end{example}

The next example takes a look at four graphs that admit proper LaFR in a very interesting way. 

\begin{example}\label{4egs}
In Figure \ref{fig4egs}, the vertices that are not black in each graph are the ones between which proper LaFR occurs. For example, in the first graph, $C_6$, proper LaFR occurs between antipodal vertices. Comparing the first to the second graph, we can see that adding an edge between two vertices that do not have LaFR destroys their LaFR with their matching vertices. Going from the second to the fourth graph still preserves the LaFR. Now, what is particularly interesting is the comparison between the second and the third graph: Adding an edge between the two vertices admitting proper LaFR still preserves the proper LaFR. All of these are just observations on these specific graphs and do not apply to all graphs. 
\end{example}

\begin{figure}[h]\label{fig4egs}
\centering
\begin{tikzpicture}
\path (1,0) coordinate (a1);
\fill [blue]  (a1)  circle (2pt);
\path  (2,1) coordinate (a2);
\fill [green] (a2)  circle (2pt);
\path (2,2) coordinate (a3);
\fill [red] (a3)  circle (2pt);
\path (1,3) coordinate (a4);
\fill [blue]  (a4)  circle (2pt);
\path (0,2) coordinate (a5);
\fill [green]  (a5)  circle (2pt);
\path (0,1) coordinate (a6);
\fill [red] (a6)  circle (2pt);
\draw (a1)--(a2)--(a3)--(a4)--(a5)--(a6)--(a1);

\path (3.5,0) coordinate (b1);
\fill [blue]  (b1)  circle (2pt);
\path (4.5,1) coordinate (b2);
\fill  (b2)  circle (2pt);
\path (4.5,2) coordinate (b3);
\fill  (b3)  circle (2pt);
\path (3.5,3) coordinate (b4);
\fill [blue]  (b4)  circle (2pt);
\path (2.5,2) coordinate (b5);
\fill  (b5)  circle (2pt);
\path (2.5,1) coordinate (b6);
\fill  (b6)  circle (2pt);
\draw (b1)--(b2)--(b3)--(b4)--(b5)--(b6)--(b1);
\draw (b2)--(b6);

\path (6,0) coordinate (c1);
\fill [blue]  (c1)  circle (2pt);
\path (7,1) coordinate (c2);
\fill  (c2)  circle (2pt);
\path (7,2) coordinate (c3);
\fill  (c3)  circle (2pt);
\path (6,3) coordinate (c4);
\fill [blue]  (c4)  circle (2pt);
\path (5,2) coordinate (c5);
\fill  (c5)  circle (2pt);
\path (5,1) coordinate (c6);
\fill  (c6)  circle (2pt);
\draw (c1)--(c2)--(c3)--(c4)--(c5)--(c6)--(c1);
\draw (c2)--(c6);
\draw (c1)--(c4);

\path (8.5,0) coordinate (d1);
\fill [blue]  (d1)  circle (2pt);
\path (9.5,1) coordinate (d2);
\fill  (d2)  circle (2pt);
\path (9.5,2) coordinate (d3);
\fill  (d3)  circle (2pt);
\path (8.5,3) coordinate (d4);
\fill [blue]  (d4)  circle (2pt);
\path (7.5,2) coordinate (d5);
\fill  (d5)  circle (2pt);
\path (7.5,1) coordinate (d6);
\fill  (d6)  circle (2pt);
\draw (d1)--(d2)--(d3)--(d4)--(d5)--(d6)--(d1);
\draw (d2)--(d6);
\draw (d3)--(d5);
\end{tikzpicture}
\caption{Example \ref{4egs}}
\end{figure}

\section{Cartesian products}
In this section, we discuss a graph operation that preserves LaFR.

Let $X$ and $Y$ be two graphs. The \textsl{Cartesian product} of $X$ and $Y$, denoted $X\cart Y$, is the graph with vertex set $V(X) \times V(Y)$, such that two vertices $(x_1, y_1)$ and
$(x_2, y_2)$ are adjacent if either $x_1$ is adjacent to $x_2$ in $X$ and $y_1=y_2$, or $x_1=x_2$ and $y_1$ is adjacent to $y_2$. Using the Kronecker product of matrices, we can express the Laplacian matrix of $X\cart Y$ in terms of the Laplacian matrices of $X$ and $Y$:
\[L(X\cart Y) = L(X)\otimes I + I\otimes L(Y);\]
for details about this identity, see Godsil and Coutinho \cite{GSCQW}. As $L(X)\otimes I$ commutes with $I\otimes L(Y)$, it follows that
\[U_{X\cart Y} (t) = U_X(t) \otimes U_Y(t).\]

We characterize proper LaFR on Cartesian products in terms of their base graphs.
\begin{theorem}[Proper LaFR in Cartesian Products]\label{thm:cartproduct}
    Let $X$ and $Y$ be two graphs. The Cartesian product $X\cart Y$ has proper LaFR at time $\tau$ if and only if $X$ has a periodic vertex at time $\tau$ and $Y$ has proper LaFR at time $\tau$.
\end{theorem}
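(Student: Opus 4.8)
The plan is to reduce everything to the product formula $U_{X\cart Y}(t) = U_X(t)\otimes U_Y(t)$ together with $e_{(x,y)} = e_x\otimes e_y$, which gives, for any vertices $a_1\in V(X)$ and $a_2\in V(Y)$,
\[U_{X\cart Y}(\tau)\,e_{(a_1,a_2)} = \bigl(U_X(\tau)e_{a_1}\bigr)\otimes\bigl(U_Y(\tau)e_{a_2}\bigr).\]
I would set $u = U_X(\tau)e_{a_1}$ and $v = U_Y(\tau)e_{a_2}$; these are unit vectors because $U_X(\tau)$ and $U_Y(\tau)$ are unitary, and the single structural fact I would lean on is that $\operatorname{supp}(u\otimes v) = \operatorname{supp}(u)\times\operatorname{supp}(v)$ as a subset of $V(X)\times V(Y)$.

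For the forward implication, assume $X\cart Y$ has proper LaFR at time $\tau$ from $(a_1,a_2)$ to $(b_1,b_2)$, so that $u\otimes v = \alpha\, e_{a_1}\otimes e_{a_2} + \beta\, e_{b_1}\otimes e_{b_2}$ with $\alpha,\beta\neq 0$ and $(a_1,a_2)\neq(b_1,b_2)$. The right-hand side has support of size exactly $2$, hence $|\operatorname{supp}(u)|\cdot|\operatorname{supp}(v)| = 2$; since neither support is empty, one of them is a singleton and the other has two elements. In the case $\operatorname{supp}(u) = \{a_1\}$ — matching the rectangle $\operatorname{supp}(u)\times\operatorname{supp}(v)$ to $\{(a_1,a_2),(b_1,b_2)\}$ forces $a_1 = b_1$ — the vector $u$ equals $\mu e_{a_1}$ for some $\mu$ with $|\mu| = 1$, so $X$ is periodic at $a_1$ at time $\tau$; and then $v$ is supported on $\{a_2,b_2\}$ with $a_2\neq b_2$ and with coefficients $\alpha/\mu$ and $\beta/\mu$, both nonzero, i.e. $Y$ has proper LaFR from $a_2$ to $b_2$ at time $\tau$. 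The other case swaps the roles of $X$ and $Y$, which by commutativity of $\cart$ is the same conclusion up to interchanging the two factors, so the statement as written (with $X$ periodic and $Y$ revived) covers it after relabeling.

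For the converse I would run the computation in reverse: if $U_X(\tau)e_{a_1} = \mu e_{a_1}$ with $|\mu|=1$ and $U_Y(\tau)e_{a_2} = \alpha e_{a_2} + \beta e_{b_2}$ with $\alpha\beta\neq0$, $a_2\neq b_2$, then
\[U_{X\cart Y}(\tau)\,e_{(a_1,a_2)} = \mu e_{a_1}\otimes(\alpha e_{a_2} + \beta e_{b_2}) = \mu\alpha\,e_{(a_1,a_2)} + \mu\beta\,e_{(a_1,b_2)},\]
which exhibits proper LaFR from $(a_1,a_2)$ to $(a_1,b_2)$ at time $\tau$. In both directions I would then invoke the equivalence recorded just below \eqref{def:blockFR} (valid because $U(\tau)$ is complex symmetric and unitary) to pass between ``the $(a_1,a_2)$-column has the stated form'' and ``$U(\tau)$ has the block-diagonal shape \eqref{def:blockFR}''.

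The step I expect to be the main obstacle — really the only place care is needed — is the combinatorial extraction in the forward direction: from the single equation $\operatorname{supp}(u)\times\operatorname{supp}(v) = \{(a_1,a_2),(b_1,b_2)\}$ one must deduce simultaneously that one factor is a unit scalar multiple of a standard basis vector (periodicity) and that the other factor's two-element support is exactly $\{a_2,b_2\}$ with distinct entries and both coefficients nonzero, so that the induced statement is genuinely proper LaFR and not periodicity. It is also worth flagging that ``proper'' must be read as $\alpha\neq0$ \emph{and} $\beta\neq0$: if perfect state transfer on a single factor were allowed, $u\otimes v$ could be supported on a single coordinate and the clean conjunctive characterization would fail — for instance $K_2\cart K_2\cong C_4$ has Laplacian perfect state transfer at time $\pi/2$ although neither $K_2$ factor is periodic then.
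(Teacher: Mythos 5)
Your argument is correct and rests on the same identity the paper uses, $U_{X\cart Y}(\tau)=U_X(\tau)\otimes U_Y(\tau)$, but you run it on a single column and its support, while the paper manipulates the full block form entrywise via \eqref{Eqn:top} and \eqref{Eqn:zeros}. Your route buys something the paper's write-up silently assumes: equations \eqref{Eqn:top}--\eqref{Eqn:zeros} presuppose that the revival pair of $X\cart Y$ is $\{(x_1,y_1),(x_1,y_2)\}$, i.e.\ that the two vertices share their $X$-coordinate, whereas your rectangle argument $\operatorname{supp}(u\otimes v)=\operatorname{supp}(u)\times\operatorname{supp}(v)$ \emph{derives} that configuration (or its swap) instead of assuming it. The relabelling you invoke for the swapped case is genuinely needed to read the statement correctly: with $X=P_3$, $Y=K_3$ and $\tau=2\pi/3$, the product has proper LaFR but it is $Y$ that is periodic and $X$ that revives, so the roles of the factors cannot be fixed in advance.

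More importantly, your closing remark is not a cosmetic caveat but a real catch. In this paper ``proper LaFR'' is defined by $\beta\ne0$ alone, so LaPST counts as proper; under that reading the forward implication is false: $K_2\cart K_2\cong C_4$ has LaPST, hence proper LaFR, at $\pi/2$, while neither $K_2$ factor is periodic at $\pi/2$. The paper's proof misses this exactly because of the positional assumption above --- if the revival pair differs in both coordinates, $\beta$ is no longer $a_{11}b_{21}$ and the deduction $a_{11}\ne0$ collapses. So what you have proved is the corrected theorem, with ``proper'' read as $\alpha\beta\ne0$ on both sides; the alternative repair, keeping the paper's definition, is to state the equivalence as ``one factor has a periodic vertex and the other has proper LaFR at $\tau$, or both factors admit LaPST at $\tau$.'' Either fix is exactly what your support analysis delivers, and it is worth recording explicitly, since the paper later relies on this theorem (e.g.\ in Lemma \ref{lem:polygamy}) in the unproblematic direction only.
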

\begin{proof}
    First, suppose $X \cart Y$ has proper LaFR at time $\tau$. Then
    \begin{align*}
    U_{X\cart Y}(\tau) &= U_X(\tau)\otimes U_Y(\tau) = \pmat{
    \begin{matrix}
        \alpha&\beta\\ \beta&\gamma
    \end{matrix}
    &\bigzero\\
    \bigzero&N}
    \end{align*}
    Let $a_{ij}$ and $b_{ij}$ be the $ij$-entries of $U_X(\tau)$ and $U_Y(\tau)$, respectively. We have
\begin{equation}
\label{Eqn:top}
    a_{11}b_{11} = \alpha, \quad
    a_{11}b_{21} = \beta, \quad
    a_{11}b_{12} = \beta, \quad
    a_{11}b_{22} = \gamma,
\end{equation}
and for $j\ge 3$,
\begin{equation}
\label{Eqn:zeros}
    a_{11}b_{j1} = 0, \quad
    a_{11}b_{j2} = 0, \quad
    a_{11}b_{1j} = 0, \quad 
    a_{11}b_{2j} = 0.
\end{equation}
As LaFR on $X\cart Y$ is proper, $\beta\ne0$, and so $a_{11}\ne0$. From \eqref{Eqn:top} it follows that $a_{11}\neq 0$, $b_{12}\neq 0$ and $b_{21}\neq 0$. Further, from \eqref{Eqn:zeros} we can conclude that for $j\ge 3$,
\begin{equation}
\label{Eqn:zerosb}
    b_{j1} = 0, \quad
    b_{j2} = 0, \quad
    b_{1j} = 0, \quad
    b_{2j} = 0.
\end{equation}
So, Y indeed has proper LaFR at time $\tau$. Now, since $b_{12}\neq 0$ and $b_{21}\neq 0$, by similar reasoning we have $a_{j1} = a_{1j} = 0$ for all $j \neq 1$.Therefore, $X$ must have a periodic vertex at $\tau$. 

Conversely, suppose at time $\tau$, $X$ has a periodic vertex and $Y$ has proper LaFR. Then 
\begin{align*}
    U_X(\tau) &= 
    \begin{pmatrix}
    1&0&\dots&0\\
    \begin{matrix}
    0\\ \vdots\\ 0
    \end{matrix}&&\begin{matrix}
    N_x
    \end{matrix}
    \end{pmatrix}
\end{align*}
for some $(n-1)\times(n-1)$ matrix $N_x$, and
\begin{align*}
    U_Y(\tau) &= 
    \begin{pmatrix}
        \begin{matrix}
            N
        \end{matrix}
        & \bigzero \\
        \bigzero  &
        \begin{matrix}
            N_y
        \end{matrix}
    \end{pmatrix}
    =
    \begin{pmatrix}
        \begin{matrix}
            \alpha & \beta \\
            \beta & \gamma
        \end{matrix}
        & 0 \\
        0  &
        \begin{matrix}
            N_y
        \end{matrix}
    \end{pmatrix}
\end{align*}
for some $(n-2)\times(n-2)$ matrix $N_y$. It follows that
\begin{align*}
    U_X(\tau) \otimes U_Y(\tau) &=
    \begin{pmatrix}
        \begin{matrix}
            \alpha & \beta \\
            \beta & \gamma
        \end{matrix}&\bigzero\\
        \bigzero&
        \begin{matrix}
            N_x\otimes N_y
        \end{matrix}
    \end{pmatrix}
\end{align*}
As LaFR on $Y$ is proper, $\beta \ne 0$, and so LaFR on $X\cart Y$ is proper.
\end{proof}

\section{Complements and joins}
\label{sec:compjoin}
Under certain conditions, LaFR is also preserved by taking complements or joins. The following result generalizes Theorem 2 of \cite{PSTLa}.

\begin{theorem}\label{thm:complement}
Let $X$ be a graph on $n$ vertices. If $X$ has LaPST or LaFR between two vertices $u$ and $v$ at time $\tau$ , with $n\tau \in 2\pi \ZZ$, then the complement $\comp{X}$ has LaPST or LaFR, respectively, between these two vertices at $\tau$.
\end{theorem}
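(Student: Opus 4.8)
The plan is to exploit the fact that the complement's Laplacian is, up to a shift by the all-ones matrix $J$, the negative of the original Laplacian. Concretely, if $L$ is the Laplacian of $X$ on $n$ vertices, then the Laplacian of $\comp{X}$ is $\comp{L} = nI - J - L$. The three matrices $I$, $J$, and $L$ pairwise commute (since $L\one = 0$ means $LJ = JL = 0$), so the transition matrix factors:
\[
U_{\comp{X}}(t) = \exp(it\comp{L}) = \exp(itnI)\exp(-itJ)\exp(-itL) = e^{itn}\,\exp(-itJ)\,U_X(-t).
\]
Now I would evaluate each factor at $t = \tau$. By hypothesis $n\tau \in 2\pi\ZZ$, so $e^{i n\tau} = 1$. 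For the middle factor, since $J$ has eigenvalue $n$ on $\one$ and $0$ on $\one^\perp$, we get $\exp(-i\tau J) = I + \frac{e^{-in\tau}-1}{n}J = I$ (again using $n\tau \in 2\pi\ZZ$). So $U_{\comp{X}}(\tau) = U_X(-\tau) = \overline{U_X(\tau)}$, the entrywise complex conjugate, because $L$ is real symmetric and hence $U_X(t) = \sum_r e^{it\mu_r}F_r$ has conjugate equal to $U_X(-t)$.

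From here the conclusion is immediate: if $U_X(\tau)e_u = \alpha e_u + \beta e_v$, then $U_{\comp{X}}(\tau)e_u = \overline{U_X(\tau)}\,e_u = \overline{U_X(\tau)e_u} = \bar\alpha e_u + \bar\beta e_v$, which is exactly LaFR (or LaPST, when $\alpha = 0$ forces $\bar\alpha = 0$, and proper LaFR is likewise preserved since $\bar\beta \neq 0 \iff \beta \neq 0$) between $u$ and $v$ at time $\tau$ on $\comp{X}$. Equivalently, in the block form \eqref{def:blockFR}, conjugating the whole matrix preserves the block-diagonal shape and the symmetry $\beta = \beta$ of the $2\times 2$ block.

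I do not anticipate a serious obstacle here; the only points requiring care are (i) verifying $\comp{L} = nI - J - L$ and the commutativity, which is routine, and (ii) making sure the hypothesis $n\tau \in 2\pi\ZZ$ is used in both places it is needed — to kill the scalar $e^{in\tau}$ and to collapse $\exp(-i\tau J)$ to the identity. One could alternatively phrase the argument purely spectrally: the eigenprojections of $\comp{L}$ on $\one^\perp$ are exactly those of $L$, with eigenvalue $\mu_r$ replaced by $n - \mu_r$, and the $\one$-eigenspace contributes $\frac1n J$ to $U_{\comp{X}}(\tau)$ just as it does to $U_X(\tau)$ when $n\tau\in 2\pi\ZZ$; matching terms gives the same identity $U_{\comp{X}}(\tau) = \overline{U_X(\tau)}$. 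Finally, since proper LaFR and LaPST are defined by open/closed conditions on $\beta$ that are invariant under conjugation, the "respectively" in the statement follows without extra work.
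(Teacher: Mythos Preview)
Your proof is correct and follows essentially the same approach as the paper: both use the identity $\comp{L} = nI - J - L$, the commutativity of $L$ and $J$, and the explicit form $\exp(-itJ) = I + \frac{e^{-int}-1}{n}J$ to conclude $\exp(i\tau\comp{L}) = \exp(-i\tau L)$ when $n\tau \in 2\pi\ZZ$. You add the observation $U_X(-\tau) = \overline{U_X(\tau)}$ and spell out the preservation of the $\alpha,\beta$ conditions, which the paper leaves implicit.
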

\begin{proof}
Let $L$ be the Laplacian matrix of $X$, and $\comp{L}$ the Laplacian matrix of $\comp{X}$. We have
\[\comp{L} = nI-J-L.\]
Since $L$ commutes with $J$,
\begin{align*}
\exp(it\comp{L})
&=e^{int}\exp(-itJ)\exp(-itL)\\
&=e^{int}\left((e^{-int}-1)\frac{1}{n}J+I\right)\exp(-itL)
\end{align*}
Thus, if $nt \in 2\pi\ZZ$, we have $\exp(it\comp{L})=\exp(-itL)$.
\end{proof}

Given two graphs $X$ and $Y$, the \textsl{join} of $X$ and $Y$, denoted $X+Y$, is the graph obtained from the disjoint union $X\cup Y$ by joining all vertices of $X$ to all vertices of $Y$. Equivalently,
\[X+Y = \comp{\comp{X}\cup \comp{Y}}.\]

\begin{corollary}\label{cor:join}
Let $X$ be a graph whose complement admits LaPST or LaFR between vertex $a$ and vertex $b$ at time $\tau$. For any graph $Y$, the join $X+Y$ has LaPST or LaFR, respectively, between these two vertices at time $\tau$ provided that $\tau(\abs{V(X)}+\abs{V(Y)})\in 2\pi\ZZ$.
\end{corollary}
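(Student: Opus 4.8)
The plan is to reduce this immediately to Theorem~\ref{thm:complement} via the identity $X+Y=\comp{\comp{X}\cup\comp{Y}}$. Set $Z=\comp{X}\cup\comp{Y}$ (disjoint union) and $n=\abs{V(X)}+\abs{V(Y)}=\abs{V(Z)}$. Since $\comp{Z}=X+Y$ on the same vertex set $V(X)\sqcup V(Y)$, and $a,b\in V(X)$, it suffices to check that $Z$ itself admits LaPST (resp.\ LaFR) between $a$ and $b$ at time $\tau$; then, because $n\tau\in 2\pi\ZZ$ by hypothesis, Theorem~\ref{thm:complement} applied to $Z$ yields the conclusion for $\comp{Z}=X+Y$.

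So the one thing to verify is that disjoint union preserves the phenomenon. First I would note that the Laplacian of a disjoint union is block diagonal with respect to the partition $V(Z)=V(\comp{X})\sqcup V(\comp{Y})$, i.e.\ $L(Z)=L(\comp{X})\oplus L(\comp{Y})$; exponentiating a block diagonal matrix block by block gives $U_Z(t)=U_{\comp{X}}(t)\oplus U_{\comp{Y}}(t)$. By hypothesis $\comp{X}$ admits LaPST (resp.\ LaFR) between $a$ and $b$ at $\tau$, so after reordering the vertices of $\comp{X}$ so that $a,b$ come first, $U_{\comp{X}}(\tau)=\left(\begin{smallmatrix}\alpha&\beta\\\beta&\gamma\end{smallmatrix}\right)\oplus N'$ in the form \eqref{def:blockFR} (with $\alpha=0$ in the LaPST case, $\beta\ne 0$ in the proper LaFR case). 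Consequently $U_Z(\tau)=\left(\begin{smallmatrix}\alpha&\beta\\\beta&\gamma\end{smallmatrix}\right)\oplus\bigl(N'\oplus U_{\comp{Y}}(\tau)\bigr)$, which is again of the form \eqref{def:blockFR} with $N'\oplus U_{\comp{Y}}(\tau)$ in the role of $N$. Hence $Z$ admits LaPST (resp.\ LaFR) between $a$ and $b$ at $\tau$, as needed.

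Applying Theorem~\ref{thm:complement} to $Z$, which has $n$ vertices, with $n\tau\in 2\pi\ZZ$, we conclude that $\comp{Z}=X+Y$ admits LaPST (resp.\ LaFR) between $a$ and $b$ at time $\tau$, which is the assertion of the corollary.

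I do not expect any genuine obstacle here; this is a short deduction. The only point demanding a little care is the bookkeeping in the middle paragraph: making sure that forming the disjoint union with $\comp{Y}$ leaves the $2\times 2$ block of $U_{\comp{X}}(\tau)$ that carries the revival untouched (it simply absorbs the $U_{\comp{Y}}(\tau)$ block into the "$N$" part of \eqref{def:blockFR}), and keeping straight that $\comp{\comp{X}\cup\comp{Y}}$ really is $X+Y$ on the vertex set $V(X)\sqcup V(Y)$, so that the vertices $a$ and $b$ — which belong to $V(X)$ — refer to the same pair throughout the argument. Everything else is a direct invocation of Theorem~\ref{thm:complement}.
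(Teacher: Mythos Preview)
Your proof is correct and follows exactly the approach the paper intends: reduce to Theorem~\ref{thm:complement} via the identity $X+Y=\comp{\comp{X}\cup\comp{Y}}$, after noting that the Laplacian (hence the transition matrix) of a disjoint union is block diagonal so that LaFR/LaPST on $\comp{X}$ persists in $\comp{X}\cup\comp{Y}$. The paper's own proof is a single sentence invoking that identity and Theorem~\ref{thm:complement}; you have simply spelled out the disjoint-union step that the paper leaves implicit.
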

\begin{proof}
This follows from the above identity and Theorem \ref{thm:complement}.
\end{proof}

As a special case of joins, the graph $\comp{K_2}+Y$ is called the \textsl{double cone} over $Y$. We will refer to the two vertices from $K_2$ in a double cone as the \textsl{conical vertices}. Our next result shows that double cones provide an infinite family of graphs on which proper LaFR occurs.

\begin{theorem}\label{BasicDBCone}
    Let $n\ge 3$ be an integer. Then any double cone on $n$ vertices admits proper LaFR at time $2\pi/n$ between the conical vertices. Moreover, the LaFR is LaPST if and only if $n=4$.
\end{theorem}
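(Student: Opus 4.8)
The plan is to write the double cone as $\comp{K_2}+Y$ where $Y$ is an arbitrary graph on $n-2$ vertices, compute its Laplacian spectrum explicitly in terms of that of $Y$, and then evaluate the transition matrix at $\tau=2\pi/n$ directly on the two conical vertices. First I would recall that if $Y$ has $m=n-2$ vertices with Laplacian eigenvalues $0=\theta_0\le\theta_1\le\dots\le\theta_{m-1}$, then the join $\comp{K_2}+Y$ has Laplacian eigenvalues obtained from the standard join formula: the eigenvalue $0$ (eigenvector $\one$), the eigenvalue $n=m+2$ with multiplicity one more than in each part's adjusted spectrum, the eigenvalue $2$ coming from the $\comp{K_2}$ part (eigenvector supported on the two conical vertices, namely $e_a-e_b$), and the eigenvalues $\theta_i+2$ for $i\ge 1$ coming from $Y$ (with eigenvectors supported on $V(Y)$, hence orthogonal to both $e_a$ and $e_b$). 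The key point is that every eigenprojection $F_r$ other than the ones for eigenvalues $0$, $2$, and $n$ annihilates both $e_a$ and $e_b$, because the corresponding eigenvectors live entirely inside $V(Y)$.

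Next I would assemble $U(\tau)e_a$ from the three surviving projections. Writing $e_a = \tfrac12(e_a+e_b) + \tfrac12(e_a-e_b)$, note that $e_a-e_b$ is the $2$-eigenvector (up to scaling), and $e_a+e_b$ decomposes into its component along $\one$ (the $0$-eigenvector) plus a component in the $n$-eigenspace — here I would use Theorem \ref{Levbd}, or rather just the join structure, to see that $n$ is indeed an eigenvalue and that the projection $F_n$ acts on the conical coordinates appropriately; concretely $F_0 = \tfrac1n J$, so on the $\{a,b\}$ block $F_0$ contributes the constant matrix with every entry $1/n$, and $F_2$ contributes $\tfrac12\begin{psmallmatrix}1&-1\\-1&1\end{psmallmatrix}$ on that block, and $F_n$ contributes the rest so that $F_0+F_2+F_n$ restricted to $\{a,b\}$ equals $I_2$. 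Then
\[
U(\tau)e_a\big|_{\{a,b\}} = e^{0}F_0 e_a\big|_{\{a,b\}} + e^{2\ii\tau}F_2 e_a\big|_{\{a,b\}} + e^{\ii n\tau}F_n e_a\big|_{\{a,b\}},
\]
and with $n\tau=2\pi$ the last exponential is $1$, collapsing the expression to a combination of only two distinct phases, $1$ and $e^{4\pi\ii/n}$. A short computation then gives $U(\tau)e_a = \alpha e_a+\beta e_b$ with
\[
\alpha = \frac{1}{n}+\Bigl(1-\frac{1}{n}\Bigr)\cdot\frac{1+e^{4\pi\ii/n}}{2},\qquad
\beta = \frac{1}{n}-\Bigl(1-\frac{1}{n}\Bigr)\cdot\frac{1-e^{4\pi\ii/n}}{2}\cdot(\pm1),
\]
up to fixing signs/conventions; the essential feature is that the $F_n$ and $F_0$ pieces have the same phase while $F_2$ has a different one, so both $e_a$ and $e_b$ survive with generically nonzero coefficients.

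Finally I would pin down when the revival is proper and when it degenerates to LaPST. Properness ($\beta\ne0$) will follow as long as $e^{4\pi\ii/n}\ne 1$ except possibly for small $n$ — i.e. we need $4\pi/n\notin 2\pi\ZZ$, equivalently $n\notin\{1,2,4\}$; combined with $n\ge 3$ this rules out only $n=4$ from the "properness" side, but one must double-check $n=4$ separately and find that there $\beta$ does not vanish for the reason one first expects — instead at $n=4$ one gets $e^{4\pi\ii/4}=e^{\pi\ii}=-1$, which makes $\alpha=\tfrac14+\tfrac34\cdot\tfrac{1-1}{2}\cdot\ldots$ collapse so that $\alpha=0$, giving LaPST rather than mere periodicity. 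So the clean statement is: for all $n\ge3$ we get proper LaFR (in the sense $\beta\ne0$) at $2\pi/n$, and $\alpha=0$ exactly when $n=4$. I expect the main obstacle to be purely bookkeeping: getting the eigenprojection $F_n$ on the conical block exactly right (it is not simply a rank-one object because $Y$ contributes to the $n$-eigenspace when $\comp Y$ is disconnected), and then verifying the $n=4$ case carefully so that the "if and only if" in the LaPST clause is justified rather than asserted. An alternative, possibly cleaner, route is to invoke Corollary \ref{cor:join}: since $\comp{\comp{K_2}+Y}=K_2\cup\comp Y$ and $K_2$ has proper LaFR/LaPST at suitable times by Example \ref{ex:K_2}, one can transfer the phenomenon to the double cone provided $\tau n\in2\pi\ZZ$, which is exactly $\tau=2\pi/n$; I would use whichever of the two arguments makes the $n=4$ boundary case most transparent.
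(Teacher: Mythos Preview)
The paper takes exactly your ``alternative, possibly cleaner route'': it applies Corollary~\ref{cor:join} with $X=\comp{K_2}$, noting that $K_2$ has proper LaFR at $\tau=2\pi/n$ whenever $\tau\notin\pi\ZZ$ (i.e.\ $n\ge 3$) by Example~\ref{ex:K_2}, and that $n\tau=2\pi\in2\pi\ZZ$. The LaPST clause follows because $K_2$ has LaPST precisely at odd multiples of $\pi/2$, and $2\pi/n$ is such a multiple only for $n=4$. This is the whole proof, and it sidesteps all of the bookkeeping you flag as a potential obstacle.

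Your primary, direct-spectrum approach is viable but contains a concrete slip: the eigenvector $e_a-e_b$ has Laplacian eigenvalue $n-2$, not $2$. The conical vertices each have degree $n-2$ (they are adjacent to all of $Y$ but not to each other), and one checks $L(e_a-e_b)=(n-2)(e_a-e_b)$ directly; equivalently, the join formula shifts eigenvalues of $L(\comp{K_2})$ by $\lvert V(Y)\rvert=n-2$, not by $\lvert V(\comp{K_2})\rvert=2$. With the correct eigenvalue one gets simply
\[
\alpha=\frac{1+e^{-4\pi\ii/n}}{2},\qquad \beta=\frac{1-e^{-4\pi\ii/n}}{2},
\]
so $\beta\ne 0$ for all $n\ge 3$ and $\alpha=0$ iff $e^{-4\pi\ii/n}=-1$ iff $n=4$. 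Your displayed formulas for $\alpha,\beta$ do not match either eigenvalue choice; happily the error does not propagate to the conclusion, since $e^{\ii\tau\cdot 2}$ and $e^{\ii\tau(n-2)}$ are complex conjugates when $\tau=2\pi/n$, so $\lvert\alpha\rvert$ and $\lvert\beta\rvert$ come out the same either way. Your worry about $F_n$ when $\comp Y$ is disconnected is well placed but, as you note, irrelevant to the $\{a,b\}$ block.
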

\begin{proof}
Let $\comp{K_2}+Y$ be a double cone on $n$ vertices, and let $\tau=2\pi/n$. As $n\ge 3$, we have $\tau \notin \pi\ZZ$, and so by Example \ref{ex:K_2}, $K_2$ has proper LaFR at time $\tau$. Now apply Corollary \ref{cor:join} with $X = \comp{K_2}$. 

Since LaPST occurs on $K_2$ at odd multiples of $\pi/2$, the double cone $\comp{K_2}+Y$ admits LaPST at time $2\pi/n$ if and only if $n = 4$.
\end{proof}

At the end of this section, we mention another infinite family of graphs, found by Kirkland and Zhang \cite{LaFR}, with LaFR.

Let $O_n=\comp{K_n}$. The \textsl{threshold graph}, denoted $\Gamma(m_1, m_2, \ldots, m_{2k})$, is the graph 
\begin{equation*}
    \Bigg(\bigg( \Big( \big((O_{m_1}+K_{m_2})\cup O_{m_3} \big)+K_{m_4} \Big)\cup \cdots\bigg) O_{m_{2k-1}} \Bigg)+K_{m_{2k}}
\end{equation*}

\begin{theorem}\cite{LaFR}
The threshold graph $\Gamma(m_1, m_2, \ldots, m_{2k})$ admits
LaFR between vertex $a$ and vertex $b$ at time $\tau$ if and only if
\begin{enumerate}[(i)]
\item
$m_1=2$ and $V(O_{m_1})=\{a,b\}$,
\item
$\tau$ is not an integer multiple of $\pi/2$, and
\item
$(m_1+m_2)\tau\equiv 0 \md{2\pi}$, and $m_j\tau \equiv 0 \md{2\pi}$, for $j=3,\ldots,2k$.
\end{enumerate}
\end{theorem}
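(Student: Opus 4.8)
The plan is to prove the two directions by quite different means: sufficiency by an induction on $k$ that repeatedly applies the join result Corollary \ref{cor:join}, and necessity by feeding the Laplacian spectrum of a threshold graph into the spectral characterization of proper LaFR, Theorem \ref{CharThm}. (Throughout, ``LaFR'' in this statement should be read as proper LaFR that is \emph{not} perfect state transfer, which is exactly what condition (ii) serves to pin down.)

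For sufficiency, assume (i)--(iii) and write $\Gamma_k = \Gamma(m_1,\dots,m_{2k})$, $n_k = m_1+\cdots+m_{2k}$. I would induct on $k$. The base case $\Gamma_1 = O_{m_1}+K_{m_2} = \comp{K_2}+K_{m_2}$ is a double cone: since $\tau\notin\frac{\pi}{2}\ZZ$ by (ii), $\comp{\comp{K_2}} = K_2$ has proper LaFR between its two vertices (Example \ref{ex:K_2}), and since $(m_1+m_2)\tau\in 2\pi\ZZ$ by (iii), Corollary \ref{cor:join} transfers this to $\Gamma_1$ (this is essentially Theorem \ref{BasicDBCone}). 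For the inductive step, write $\Gamma_k = W + K_{m_{2k}}$ with $W = \Gamma_{k-1}\cup O_{m_{2k-1}}$, so $\comp{W} = \comp{\Gamma_{k-1}}+\comp{O_{m_{2k-1}}} = \comp{\Gamma_{k-1}}+K_{m_{2k-1}}$. By the inductive hypothesis $\Gamma_{k-1} = \comp{\comp{\Gamma_{k-1}}}$ has proper LaFR between $a$ and $b$ at $\tau$; since (iii) forces $(m_1+\cdots+m_{2k-1})\tau\in 2\pi\ZZ$, Corollary \ref{cor:join} yields proper LaFR on $\comp{W}$; and one more application of Corollary \ref{cor:join} to $W+K_{m_{2k}}$, using $n_k\tau\in 2\pi\ZZ$ (again from (iii)), gives proper LaFR on $\Gamma_k$. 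Each arithmetic side condition is immediate because the relevant vertex count is a sum of terms individually killed by $\tau$, and ``proper'' survives because each step only conjugates and rescales the relevant $2\times2$ block.

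For necessity, suppose $\Gamma = \Gamma(m_1,\dots,m_{2k})$ has proper LaFR between $a$ and $b$ at $\tau$. I would first record that the Laplacian spectrum of a threshold graph consists of integers (by induction on the creation sequence, or via its description as the conjugate degree sequence), so $U(2\pi)=I$, periodicity is automatic, and Theorem \ref{CharThm} reduces proper LaFR at $\tau$ to: $a,b$ are strongly cospectral for $L$; the eigenvalue support of $a$ splits into nonempty classes $\Sigma^+,\Sigma^-$ according to $F_r e_a = \pm F_r e_b$; $e^{\ii\tau\mu}$ is constant on each class; and the two constants differ. The heart of the proof is then a structural analysis of strong cospectrality through the equitable partition of $\Gamma$ into its creation blocks. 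For two vertices $u,v$ in a common odd block $O_{m_i}$ one has $L(e_u-e_v) = d_i(e_u-e_v)$ with $d_i=\deg(u)=m_{i+1}+m_{i+3}+\cdots+m_{2k}$, so $F_r e_u = F_r e_v$ off the $d_i$-eigenspace automatically; the remaining constraint $F_{d_i}e_u = -F_{d_i}e_v$ forces $m_i=2$ \emph{and} the block indicator $\chi_{O_{m_i}}$ to be orthogonal to every quotient eigenvector of eigenvalue $d_i$, while a parallel, more restrictive computation rules out pairs inside an even block $K_{m_j}$ and pairs straddling two blocks. One then shows that among the size-$2$ odd blocks only $O_{m_1}$ meets the orthogonality requirement (for $i\ge3$ the value $d_i$ coincides with a quotient eigenvalue whose eigenvector does not vanish on $O_{m_i}$), giving (i). Consequently $\Sigma^- = \{d_1\}$ with $d_1=m_2+m_4+\cdots+m_{2k}$, and $\Sigma^+$ is the part of the quotient spectrum in the support of $\chi_{O_{m_1}}$; diagonalising the quotient matrix identifies this set and shows ``$e^{\ii\tau\mu}$ constant on $\Sigma^+$'' is equivalent to $(m_1+m_2)\tau\equiv0$ and $m_j\tau\equiv0\pmod{2\pi}$ for $j=3,\dots,2k$, i.e.\ (iii). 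Finally, using $F_{d_1}e_a=\tfrac12(e_a-e_b)$ one computes $\alpha=\tfrac12(1+e^{\ii\tau d_1})$ and $\beta=\tfrac12(1-e^{\ii\tau d_1})$, so ``the two constants differ'' ($\beta\ne0$) and ``not perfect state transfer'' ($\alpha\ne0$) together amount, modulo (iii) and $m_1=2$, to $\tau\notin\frac{\pi}{2}\ZZ$, which is (ii).

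The hard part is the necessity direction, and within it two interlocking tasks: (a) proving that the strong-cospectrality analysis genuinely isolates the first block $O_{m_1}$ with $m_1=2$ --- the delicate point being to control collisions among the block eigenvalues $d_i$, $d_i+1$ and the quotient spectrum, so as to exclude size-$2$ blocks $O_{m_i}$ with $i\ge3$ and all even blocks; and (b) computing the quotient spectrum (and the support of $\chi_{O_{m_1}}$ in it) precisely enough that the constancy condition reads off cleanly as (iii). If the iff-characterisation of LaFR on joins developed in Section \ref{Section:Joins} is available, an alternative and perhaps cleaner route to necessity is to peel the outermost blocks $K_{m_{2k}}, O_{m_{2k-1}},\dots$ off one at a time, reducing directly to the double-cone base case; either way, sufficiency and the bookkeeping are routine once these structural facts are secured.
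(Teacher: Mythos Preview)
The paper does not prove this theorem: it is quoted from Kirkland and Zhang \cite{LaFR} at the end of Section~\ref{sec:compjoin} with no proof given, so there is no ``paper's own proof'' to compare against.

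That said, your plan is coherent and makes good use of the machinery the paper develops \emph{after} stating the cited result. Your sufficiency argument via iterated applications of Corollary~\ref{cor:join} is clean and correct; the arithmetic side conditions do follow from (iii) as you note. For necessity, the ``peeling'' alternative you mention at the end is the more efficient route given what the paper proves later: Theorem~\ref{malenaconjecturejoin} forces $n_k\tau\in 2\pi\ZZ$, and Corollary~\ref{samepartition} lets you pass from $\Gamma_k$ to the component of $\comp{\Gamma_k}$ containing $a,b$, which is again a threshold graph on fewer blocks; iterating strips off $K_{m_{2k}}, O_{m_{2k-1}},\dots$ and produces the congruences in (iii) one by one, terminating at a double cone that forces $m_1=2$ and (ii). Your primary plan via a direct strong-cospectrality analysis of the block partition would also work, but the collision analysis you flag in (a) --- ruling out size-$2$ odd blocks $O_{m_i}$ with $i\ge 3$ and all even blocks --- is genuinely fiddly for threshold graphs with repeated Laplacian eigenvalues, and you have not indicated how you would handle degeneracies where $d_i$ coincides with a quotient eigenvalue of higher multiplicity. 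The peeling route sidesteps that entirely.
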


\section{Laplacian strong cospectrality}
In this section, we study a spectral property called Laplacian strong cospectrality. This is a key property required by proper LaFR.

Let $X$ be a graph. Let $L$ be its Laplacian matrix with spectral decomposition
\[L=\sum_r \mu_r F_r.\]
We say two vertices $a$ and $b$ are \textsl{Laplacian strongly cospectral} if for each $r$, 
\[F_r e_a = \pm F_r e_b.\]
This extends the notion of adjacency strong cospectrality due to Godsil and Smith \cite{SCV}. It is proved that if a graph admits perfect state transfer between two vertices $a$ and $b$, then they must be strongly cospectral. In the next section, we will show that Laplacian strong cospectrality is also a necessary condition for LaFR.

Given two strongly cospectral vertices $a$ and $b$ with respect to $L$, there is a natural partition of the eigenvalues of $L$:
\begin{eqnarray*}
\Phi_{ab}^+ &=& \{\mu_r: F_re_a = F_r e_b \ne 0\}\\
\Phi_{ab}^- &=& \{\mu_r: F_re_a = -F_r e_b \ne 0\}\\
\Phi_{ab}^0 &=& \{\mu_r: F_re_a=F_re_b=0\}
\end{eqnarray*}
In all three sets, we have 
\[(F_r)_{aa} = e_a^T F_r e_a = e_b^T F_r e_b = (F_r)_{bb}.\]
This leads to the following observation.

\begin{lemma}\label{lem:samedeg}
Let $a$ and $b$ be two Laplacian strongly cospectral vertices. Then $\deg(a)=\deg(b)$.
\end{lemma}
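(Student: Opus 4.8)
The plan is to read off the degrees from the diagonal of the Laplacian matrix and then apply the displayed identity $(F_r)_{aa}=(F_r)_{bb}$ that precedes the statement. Since $L=D-A$ and the adjacency matrix has zero diagonal (the graph has no loops), the $vv$-entry of $L$ is exactly $\deg(v)$. Hence, using the spectral decomposition $L=\sum_r \mu_r F_r$, I would write
\[
\deg(a) = L_{aa} = e_a^T L e_a = \sum_r \mu_r\, e_a^T F_r e_a = \sum_r \mu_r (F_r)_{aa},
\]
and likewise $\deg(b)=\sum_r \mu_r (F_r)_{bb}$.

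Now I invoke the fact already recorded above the lemma: for Laplacian strongly cospectral $a$ and $b$ we have $F_r e_a = \pm F_r e_b$ for every $r$, and since each $F_r$ is a symmetric idempotent, $(F_r)_{aa} = e_a^T F_r e_a = \norm{F_r e_a}^2 = \norm{F_r e_b}^2 = (F_r)_{bb}$. Substituting this into the two expressions above gives $\deg(a)=\sum_r \mu_r (F_r)_{aa} = \sum_r \mu_r (F_r)_{bb} = \deg(b)$, which completes the proof. There is no real obstacle here; the only point worth stating explicitly is that the diagonal of $L$ encodes the degree sequence, after which the conclusion is immediate from the strong cospectrality of $a$ and $b$.
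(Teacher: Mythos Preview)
Your proof is correct and follows essentially the same approach as the paper: both use the spectral decomposition to write $L_{aa}=\sum_r \mu_r (F_r)_{aa}$, invoke the identity $(F_r)_{aa}=(F_r)_{bb}$ coming from strong cospectrality, and conclude via the fact that the diagonal of $L$ records the degrees. Your version spells out the norm argument for $(F_r)_{aa}=(F_r)_{bb}$ a bit more explicitly, but the underlying idea is identical.
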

\begin{proof}
Let the spectral decomposition of $L$ be
\[L = \sum_r \mu_r F_r.\]
Then
\[L_{aa} = \sum_r \mu_r (F_r)_{aa} =\sum_r \mu_r(F_r)_{bb}.\]
The result follows as the diagonal entries of $L$ are degrees of the vertices.
\end{proof}

Our next lemma gives the structure of the spectral idempotents summed over each class.

\begin{lemma}\label{lem:keylemma}
Let $X$ be a graph. Let $L$ be its Laplacian matrix with spectral decomposition
\[L=\sum_r \mu_r F_r.\]
Suppose vertices $a$ and $b$ are strongly cospectral with respect to $L$. Then 
\[
        \sum_{r:\mu_r\in \Phi_{ab}^+}F_r= \frac12\begin{pmatrix}
            \begin{matrix}
                1&1\\
                1&1
            \end{matrix}
            &0\\
            0&*
        \end{pmatrix},\quad
    \sum_{r:\mu_r\in \Phi_{ab}^-}F_r= \frac12\begin{pmatrix}
            \begin{matrix}
                1&-1\\
                -1&1
            \end{matrix}
            &0\\
            0&*'
        \end{pmatrix}.
\]
\end{lemma}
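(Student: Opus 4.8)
The plan is to pass from the individual spectral idempotents to the three aggregate projections
\[
E^+=\sum_{\mu_r\in\Phi_{ab}^+}F_r,\qquad
E^-=\sum_{\mu_r\in\Phi_{ab}^-}F_r,\qquad
E^0=\sum_{\mu_r\in\Phi_{ab}^0}F_r .
\]
Since the $F_r$ are pairwise orthogonal projections, the first thing I would check is that $\Phi_{ab}^+,\Phi_{ab}^-,\Phi_{ab}^0$ genuinely partition the spectrum of $L$: strong cospectrality forces $F_re_a=\pm F_re_b$ for every $r$, and the only possible overlap — $F_re_a=F_re_b$ and $F_re_a=-F_re_b$ holding simultaneously — forces $F_re_a=0$, which places $r$ in $\Phi_{ab}^0$. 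Consequently $E^+,E^-,E^0$ are orthogonal projections onto mutually orthogonal subspaces with $E^++E^-+E^0=\sum_r F_r=I$; in particular each is real symmetric. (We also assume, as is implicit in the statement, that $a\neq b$, so that the displayed $2\times2$ block makes sense.)

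Next I would compute how these act on $e_a$ and $e_b$. Straight from the definition of the classes, $E^+e_a=\sum_{\mu_r\in\Phi_{ab}^+}F_re_a=\sum_{\mu_r\in\Phi_{ab}^+}F_re_b=E^+e_b$, while $E^-e_a=-E^-e_b$ and $E^0e_a=E^0e_b=0$. Feeding these into the two resolutions of the identity, $e_a=E^+e_a+E^-e_a+E^0e_a$ and $e_b=E^+e_b+E^-e_b+E^0e_b=E^+e_a-E^-e_a$, and then adding and subtracting, yields
\[
E^+e_a=E^+e_b=\tfrac12(e_a+e_b),\qquad E^-e_a=-E^-e_b=\tfrac12(e_a-e_b).
\]

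Finally I would just read off matrix entries. The first pair of identities says the $a$- and $b$-columns of $E^+$ are both $\tfrac12(e_a+e_b)$, and since $E^+$ is symmetric its $a$- and $b$-rows are the corresponding transposes. Hence, ordering the vertices so that $a,b$ come first, the leading $2\times 2$ block of $E^+$ is $\tfrac12\pmat{1&1\\1&1}$, every remaining entry of the first two rows and columns is $0$, and the bottom-right block is unconstrained — exactly the claimed form, with $*$ equal to (twice) that block. Running the identical bookkeeping for $E^-$ gives leading block $\tfrac12\pmat{1&-1\\-1&1}$ with the same zero pattern. There is no genuinely hard step here; the only place that needs a moment's care is the partition claim in the first paragraph, together with keeping the sign convention for $\Phi_{ab}^-$ straight.
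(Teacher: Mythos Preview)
Your proof is correct and follows essentially the same route as the paper: both define the aggregate projections $E^\pm$, use $\sum_r F_r=I$ together with strong cospectrality to obtain $E^+e_a=\tfrac12(e_a+e_b)$ and $E^-e_a=\tfrac12(e_a-e_b)$, and then invoke symmetry to fill in the remaining entries. Your version is in fact slightly more careful than the paper's, since you explicitly track the $\Phi_{ab}^0$ piece and verify the three classes partition the spectrum, whereas the paper silently absorbs $E^0e_a=0$ into the identity $e_a=v^++v^-$.
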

\begin{proof}
Define
\[v^+=\sum_{r:\mu_r\in \Phi_{ab}^+}F_re_a,\quad v^-=\sum_{r:\mu_r\in \Phi_{ab}^-}F_re_a.\]
As the spectral idempotents $F_r$ sum to the identity, we have $e_a=v^++v^-$ and $e_b=v^+-v^-$. Hence
$$v^+=\frac 12(e_a+e_b),\quad v^-=\frac 12(e_a-e_b)$$
which corresponds to the first columns of $\displaystyle\sum_{r:\mu_r\in \Phi_{ab}^+}F_r$ and $\displaystyle\sum_{r:\mu_r\in \Phi_{ab}^-}F_r$. The forms of their second column and the rest of their first two rows follows from strong cospectrality and that the matrices are symmetric.
\end{proof}

If $a$ and $b$ are Laplacian strongly cospectral, then the eigenvalue $0$ lies in $\Phi_{ab}$. By the two identities in Lemma \ref{lem:keylemma}, $\Phi_{ab}^+$ contains at least one non-zero eigenvalue, and $\Phi_{ab}^-$ cannot be empty. Hence we have the following lower bounds.

\begin{corollary}\label{pmsetsize}
Let $X$ be a graph with at least three vertices. If $a,b$ are Laplacian strongly cospectral vertices in $X$, then $|\Phi_{ab}^+|\ge 2$ and $|\Phi_{ab}^-|\ge 1$.
\end{corollary}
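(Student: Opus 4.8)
The plan is to read off both bounds directly from the two matrix identities in Lemma~\ref{lem:keylemma}, using only basic properties of spectral idempotents. First I would observe that since $X$ has at least three vertices, $L$ has at least two distinct eigenvalues (it is not a scalar matrix, as $\rank L\ge 1$ and $0$ is always an eigenvalue), and in particular the eigenvalue $0$, with its eigenprojection $F_0=\frac1n J$, satisfies $F_0 e_a=F_0 e_b=\frac1n\one\ne 0$, so $0\in\Phi_{ab}^+$. This already gives $|\Phi_{ab}^+|\ge 1$; the point is to push it to $2$.

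Next I would argue $\Phi_{ab}^-\ne\emptyset$. By Lemma~\ref{lem:keylemma}, $\sum_{r:\mu_r\in\Phi_{ab}^-}F_r$ has the displayed form with a nonzero leading $2\times2$ block $\frac12\left(\begin{smallmatrix}1&-1\\-1&1\end{smallmatrix}\right)$; in particular it is a nonzero matrix, so the sum is over a nonempty index set, giving $|\Phi_{ab}^-|\ge 1$. Equivalently, $v^-=\frac12(e_a-e_b)\ne 0$ because $a\ne b$, and $v^-=\sum_{r:\mu_r\in\Phi_{ab}^-}F_re_a$, so at least one term is nonzero.

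For the bound $|\Phi_{ab}^+|\ge 2$, I would suppose for contradiction that $\Phi_{ab}^+=\{0\}$, so that $\sum_{r:\mu_r\in\Phi_{ab}^+}F_r=F_0=\frac1n J$. But Lemma~\ref{lem:keylemma} says this same sum equals $\frac12\left(\begin{smallmatrix}1&1&\0^T\\1&1&\0^T\\\0&\0&*\end{smallmatrix}\right)$, whose $(a,a)$-entry is $\frac12$, whereas the $(a,a)$-entry of $\frac1n J$ is $\frac1n$. Since $n\ge 3$, we have $\frac1n\ne\frac12$, a contradiction. Hence $\Phi_{ab}^+$ contains a nonzero eigenvalue in addition to $0$, so $|\Phi_{ab}^+|\ge 2$.

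There is no real obstacle here: the lemma does all the work, and the only subtlety is making sure one uses $n\ge 3$ (for $P_2=K_2$ one indeed has $\Phi_{ab}^+=\{0\}$ with a single element, so the hypothesis is genuinely needed). I would present the argument in essentially the two or three lines above, perhaps combining it with the remark preceding the corollary so that the proof is just ``follows from Lemma~\ref{lem:keylemma} as noted above, using that $(F_0)_{aa}=1/n\ne 1/2$ when $n\ge 3$.''
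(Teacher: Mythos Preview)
Your proposal is correct and follows essentially the same route as the paper: the paper's proof is the one-sentence remark preceding the corollary, which invokes Lemma~\ref{lem:keylemma} together with $0\in\Phi_{ab}^+$, and your argument simply spells out the implicit comparison of the $(a,a)$-entries ($\tfrac1n$ versus $\tfrac12$) that makes this work when $n\ge 3$. One small caveat worth noting is that your identification $F_0=\tfrac1n J$ tacitly assumes $X$ is connected (equivalently, that the $0$-eigenspace is one-dimensional); the paper is implicitly working under the same assumption, so this is not a divergence from the paper but something you may want to state explicitly.
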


We can say more about this partition.

\begin{theorem}\label{algclosed}
 Suppose $a,b$ are strongly cospectral vertices. Then $\Phi_{ab}^+,\Phi_{ab}^-,\Phi_{ab}^0$ are individually closed under algebraic conjugation.
\end{theorem}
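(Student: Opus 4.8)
The plan is to argue that algebraic conjugation permutes the spectral idempotents of $L$ in a way that respects the $\pm$ signs distinguishing $\Phi_{ab}^+$ from $\Phi_{ab}^-$. First I would recall the basic fact that if $\sigma$ is a Galois automorphism of $\overline{\QQ}/\QQ$ (extended to act on matrices entrywise), then applying $\sigma$ to the spectral decomposition $L=\sum_r \mu_r F_r$ yields $L = \sum_r \sigma(\mu_r)\,\sigma(F_r)$, since $L$ has rational (indeed integer) entries and is therefore fixed by $\sigma$. Because $\sigma$ permutes the eigenvalues of $L$ and the spectral idempotents are uniquely determined by the eigenvalues, we get $\sigma(F_r) = F_{r'}$ whenever $\sigma(\mu_r) = \mu_{r'}$. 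So algebraic conjugation induces a permutation of the index set $\{r\}$ under which $F_r \mapsto F_{r'}$.

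Next I would feed the strong cospectrality relation through this permutation. Fix $\sigma$ and suppose $\mu_r \in \Phi_{ab}^+$, so $F_r e_a = F_r e_b \neq 0$. Applying $\sigma$ entrywise and using that $e_a, e_b$ are rational vectors fixed by $\sigma$, we obtain $\sigma(F_r) e_a = \sigma(F_r) e_b$, i.e. $F_{r'} e_a = F_{r'} e_b$. Moreover $F_{r'} e_a \neq 0$: if it were zero then $\sigma(F_r e_a) = \sigma(F_r) e_a = F_{r'} e_a = 0$, forcing $F_r e_a = 0$ (since $\sigma$ is injective on entries), a contradiction. Hence $\mu_{r'} = \sigma(\mu_r) \in \Phi_{ab}^+$. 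The identical argument with the sign $-1$ shows $\Phi_{ab}^-$ is preserved, and then $\Phi_{ab}^0$ is preserved because it is the complement of $\Phi_{ab}^+ \cup \Phi_{ab}^-$ inside the (Galois-stable) full spectrum.

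There is one subtlety worth being careful about: the claim $F_{r'} = \sigma(F_r)$ relies on the spectral idempotent being a polynomial in $L$ with coefficients in $\QQ(\mu_r : r)$, or equivalently on the uniqueness of the spectral decomposition; I would phrase this via the Lagrange interpolation formula $F_r = \prod_{s \neq r} \frac{L - \mu_s I}{\mu_r - \mu_s}$, whose image under $\sigma$ is exactly $\prod_{s \neq r}\frac{L - \sigma(\mu_s)I}{\sigma(\mu_r) - \sigma(\mu_s)} = F_{r'}$. The main (and really only) obstacle is making sure the action of $\sigma$ on the relevant number field is set up cleanly — in particular that $\sigma$ need only be defined on the splitting field of the characteristic polynomial of $L$, and that it genuinely permutes the $\mu_r$ among themselves. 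Once that bookkeeping is in place, the rest is a short formal computation. I expect the write-up to be half a page.
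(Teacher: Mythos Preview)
Your proposal is correct and follows essentially the same Galois-theoretic approach as the paper: apply a field automorphism to the spectral data and observe that the signs $\pm 1$ are rational, hence fixed. The only cosmetic difference is that the paper carries out the argument on a single eigenvector $\vv$ (using $\vv_a = \pm \vv_b$) rather than on the idempotent $F_r$, and does not spell out the Lagrange interpolation step; your version with idempotents and the explicit formula $F_r = \prod_{s\ne r}(L-\mu_s I)/(\mu_r-\mu_s)$ is arguably tidier but not a genuinely different route.
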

\begin{proof}
Let $\mu$ be an eigenvalue which is not an integer and $\vv$ any of its eigenvectors. Let $\mu'$ be an algebraic conjugate of $\mu$. There is a field automorphism $\Psi$ of
$\QQ(\mu)$ that maps $\mu$ to $\mu'$. It also maps $\vv$ to a vector $\vv'$, which is an eigenvector for the eigenvalue $\mu'$. As $a$ and $b$ are strongly cospectral, it follows that
\[\vv_a = \pm \vv_b.\]
Now since $\Psi(1) = 1$ and $\Psi(-1) = -1$, we have
\[\vv'_a = \pm \vv'_b\]
with the same sign. Hence $\Phi_{ab}^+,\Phi_{ab}^-,\Phi_{ab}^0$ are closed under $\Psi$.
\end{proof}

We have seen that Laplacian strongly cospectral vertices have the same degree. In the following theorem, we give an upper bound and lower bound for this degree.

\begin{theorem}\label{scdegbound}
Let $X$ be a connected graph on $n$ vertices. Let $a$ and $b$ be two strongly cospectral vertices with respect to the Laplacian matrix. Let $\theta^{\pm}$ denote the largest element in $\Phi_{ab}^{\pm}$, and $\lambda^{\pm}$ the smallest non-zero element in $\Phi_{ab}^{\pm}$. Then the following hold.
\begin{enumerate}[(i)]
    \item If $a$ and $b$ are not adjacent, then 
    \[\max\left\{\frac{n-2}{n}\lambda^+, \lambda^-\right\} \le \deg(a) \le \min\left\{\frac{n-2}{n}\theta^+, \theta^-\right\}\]
    \item If $a$ and $b$ are adjacent, then 
    \[\max\left\{\frac{n-2}{n}\lambda^+ +1, \lambda^- -1\right\} \le \deg(a) \le \min\left\{\frac{n-2}{n}\theta^+ +1, \theta^- -1\right\}\]
\end{enumerate}
\end{theorem}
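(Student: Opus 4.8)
The plan is to read off $\deg(a)$ and the off-diagonal entry $L_{ab}$ directly from the spectral decomposition of $L$, and then extract the stated inequalities from a single convexity estimate applied to the classes $\Phi_{ab}^+$ and $\Phi_{ab}^-$, with Lemma \ref{lem:keylemma} supplying the relevant diagonal sums.

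First I would record the basic identities. Since $X$ is connected, $0$ is a simple eigenvalue of $L$ with idempotent $F_0=\tfrac1n J$; in particular $0\in\Phi_{ab}^+$ and $(F_0)_{aa}=\tfrac1n$. By Lemma \ref{lem:keylemma}, $\sum_{\mu_r\in\Phi_{ab}^+}(F_r)_{aa}=\sum_{\mu_r\in\Phi_{ab}^-}(F_r)_{aa}=\tfrac12$, while $(F_r)_{aa}=0$ for $\mu_r\in\Phi_{ab}^0$; moreover $(F_r)_{aa}=\norm{F_re_a}^2>0$ whenever $\mu_r\in\Phi_{ab}^\pm$, and by strong cospectrality $(F_r)_{ab}=(F_r)_{aa}$ on $\Phi_{ab}^+$, $(F_r)_{ab}=-(F_r)_{aa}$ on $\Phi_{ab}^-$, and $(F_r)_{ab}=0$ on $\Phi_{ab}^0$. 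Writing $S^{\pm}=\sum_{\mu_r\in\Phi_{ab}^{\pm}}\mu_r(F_r)_{aa}$, this yields $\deg(a)=L_{aa}=S^++S^-$ and $L_{ab}=S^+-S^-$.

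Next I would bound $S^+$ and $S^-$. The term $\mu_r=0$ contributes nothing to $S^+$, so $S^+$ is a nonnegative combination of the \emph{nonzero} elements of $\Phi_{ab}^+$, all lying in $[\lambda^+,\theta^+]$, with total weight $\tfrac12-\tfrac1n=\tfrac{n-2}{2n}$; hence $\tfrac{n-2}{2n}\lambda^+\le S^+\le\tfrac{n-2}{2n}\theta^+$. Since $X$ is connected, every element of $\Phi_{ab}^-$ is a positive eigenvalue lying in $[\lambda^-,\theta^-]$, and $S^-$ is a nonnegative combination of these with total weight $\tfrac12$, so $\tfrac12\lambda^-\le S^-\le\tfrac12\theta^-$. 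Then I split on adjacency. If $a\not\sim b$ then $L_{ab}=0$, so $S^+=S^-$ and $\deg(a)=2S^+=2S^-$; substituting both pairs of bounds gives $\tfrac{n-2}{n}\lambda^+\le\deg(a)\le\tfrac{n-2}{n}\theta^+$ and $\lambda^-\le\deg(a)\le\theta^-$, which is (i). If $a\sim b$ then $L_{ab}=-1$, so $S^-=S^++1$ and $\deg(a)=2S^++1=2S^--1$; the same substitution gives (ii).

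I do not expect a genuine obstacle here: once Lemma \ref{lem:keylemma} is in hand, the argument is bookkeeping together with one convex-combination estimate. The single point that needs care is the asymmetric factor $\tfrac{n-2}{n}$ appearing on the $\Phi_{ab}^+$ side but not on the $\Phi_{ab}^-$ side; this traces back entirely to $0\in\Phi_{ab}^+$ with $(F_0)_{aa}=\tfrac1n$, so that only weight $\tfrac{n-2}{2n}$ (rather than $\tfrac12$) is available for the nonzero eigenvalues in $\Phi_{ab}^+$. This is precisely where connectedness of $X$ is used, and where the standing hypothesis $n\ge 3$ matters, since by Corollary \ref{pmsetsize} it guarantees that a nonzero element of $\Phi_{ab}^+$ (hence $\lambda^+$) actually exists.
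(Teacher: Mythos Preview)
Your proof is correct and follows essentially the same route as the paper: both arguments read off the $\{a,b\}$-block of $L=\sum_r \mu_r F_r$, use Lemma~\ref{lem:keylemma} to get $\sum_{r\in\Phi_{ab}^\pm}(F_r)_{aa}=\tfrac12$ and $(F_0)_{aa}=\tfrac1n$, and then bound the weighted sums $\sum_{r\in\Phi_{ab}^\pm}\mu_r(F_r)_{aa}$ above and below by the extreme eigenvalues in each class. The only cosmetic difference is packaging: the paper works with the $2\times 2$ principal submatrices $\wt{F_r}=c_r\bigl(\begin{smallmatrix}1&\pm1\\ \pm1&1\end{smallmatrix}\bigr)$ and introduces $\sigma\in\{0,1\}$ to handle both adjacency cases at once, whereas you keep track of the $(a,a)$ and $(a,b)$ entries separately and split on $L_{ab}\in\{0,-1\}$ at the end.
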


\begin{proof}
Given a matrix $M$, let $\wt{M}$ denote the $\{a,b\}$ principal submatrix of $M$. As $a$ and $b$ are Laplacian strongly cospectral, for each eigenvalue $\mu_r\in \Phi_{ab}^+$, there is a non-zero constant $c_r$ such that
\[\wt{F_r} = c_r \pmat{1&1\\1&1},\]
and for each eigenvalue $\mu_r\in \Phi_{ab}^-$, there is a non-zero constant $c_r$ such that
\[\wt{F_r} = c_r \pmat{1&-1\\-1&1}.\]

On the other hand, if $d=\deg(a)$ then
\[\wt{L}=\pmat{d & -\sigma\\-\sigma & d},\]
where $\sigma=1$ if $a$ and $b$ are adjacent, and $\sigma=0$ otherwise. We abuse notation and denote $\displaystyle r\colon\mu_r\in\Phi_{ab}^\pm$ by $r\in\Phi_{ab}^\pm$ for convenience. Hence by 
\[\sum_{r\in \Phi_{ab}^+} \mu_r F_r + \sum_{r\in \Phi_{ab}^-} \mu_r F_r= L,\]
we obtain two equalities:
\begin{align*}
    \sum_{r\in\Phi_{ab}^+} \mu_r c_r +\sum_{r\in\Phi_{ab}^-} \mu_r c_r&=d,\\
    \sum_{r\in\Phi_{ab}^+} \mu_r c_r -\sum_{r\in\Phi_{ab}^-} \mu_r c_r&=-\sigma.
\end{align*}
Thus
\begin{align}
    \sum_{r\in \Phi_{ab}^+} \mu_r c_r &=\frac{d-\sigma}{2}, \label{eqn:+}\\
    \sum_{r\in \Phi_{ab}^-} \mu_r c_r &=\frac{d+\sigma}{2}. \label{eqn:-}
\end{align}
Now, recall that $0$ is an eigenvalue in $\Phi_{ab}^+$ with eigenvector $\one$. Moreover, by Lemma \ref{lem:keylemma},
\[\sum_{r\in\Phi_{ab}^+}c_r =\sum_{r\in\Phi_{ab}^-}c_r=\frac{1}{2}.\]
Therefore Equation \ref{eqn:+} tells us that
\[
    \frac{d-\sigma}{2}
    =\sum_{r\in\Phi_{ab}^+\backslash\{0\}} \mu_r c_r
    \ge \sum_{r\in\Phi_{ab}^+\backslash\{0\}} \lambda^+ c_r
    =\lambda^+\left(\sum_{r\in\Phi_{ab}^+\setminus\{0\}} c_r+\frac{1}{n}\right)-\frac{1}{n}\lambda^+
    =\left(\frac{1}{2}-\frac{1}{n}\right)\lambda^+.
\]

It follows that
\[d\ge \frac{n-2}{n}\lambda^+ +\sigma.\]
Likewise, Equation \eqref{eqn:-} tells us that
\[\frac{d+\sigma}{2}\ge \sum_{r\in\Phi_{ab}^-} \lambda^- c_r = \frac{1}{2}\lambda^-.\]
Hence
\[d\ge \lambda^- -\sigma.\]
The upper bound for $d$ follows from a similar argument.
\end{proof}

% Our next result shows that two Laplacian strongly cospectral vertices have the same stabilizer in the automorphism group of the graph.

% \begin{theorem}\label{scauto}
% Let $a$ and $b$ be two vertices of $X$ that are Laplacian strongly cospectral. Then any automorphism of $X$ that fixes $a$ must also fix $b$.
% \end{theorem}

Our next result links the integer eigenvalues in $\Phi_{ab}^-$ to the number of spanning trees of the graph.

\begin{lemma}\label{lem:minussetfactor}
Let $X$ be a graph on $n$ vertices with Laplacian strongly cospectral vertices $a$ and $b$. Let $\mu\in \Phi_{ab}^-$ be an integer. Then any odd prime $p$ that divides $\mu$ must divide the number of spanning trees of $X$.
\end{lemma}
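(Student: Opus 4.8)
The plan is to reduce the Laplacian matrix $L$ modulo $p$ and exhibit two linearly independent vectors in its $\FF_p$-kernel; since the number of spanning trees $q$ equals the cofactor $\det(L[a|a])$ by the Matrix-Tree Theorem (Theorem~\ref{mattree}), a kernel of dimension $\ge 2$ over $\FF_p$ forces $q\equiv 0\md{p}$. The first ingredient is a consequence of strong cospectrality at $\mu$: if $F$ is the spectral idempotent of $L$ for the eigenvalue $\mu$, then $\mu\in\Phi_{ab}^-$ means $Fe_a=-Fe_b$, so $F(e_a+e_b)=0$; because $F$ is self-adjoint, every $\ww$ in the $\mu$-eigenspace $\mathcal E=\ker(L-\mu I)$ satisfies $\ip{\ww}{e_a+e_b}=\ip{F\ww}{e_a+e_b}=\ip{\ww}{F(e_a+e_b)}=0$, that is, $\ww_a=-\ww_b$.

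Next I would pass to integers. Since $\mu\in\ZZ$ the matrix $L-\mu I$ is integral, so $\mathcal E$ is a rational subspace and hence contains a \emph{primitive} integer vector $\vv$ (clear denominators, then divide out the gcd of the entries); in particular its reduction $\overline{\vv}\in\FF_p^n$ is nonzero, while $\vv_a=-\vv_b$ by the previous paragraph. Reducing $L\vv=\mu\vv$ modulo $p$ and using $p\mid\mu$ gives $\overline{L}\,\overline{\vv}=\overline{0}$ over $\FF_p$, whereas $\overline{L}\,\overline{\one}=\overline{0}$ holds automatically. These two kernel vectors are linearly independent over $\FF_p$: if $\overline{\vv}=\lambda\overline{\one}$ then the $a$- and $b$-entries of $\overline{\vv}$ both equal $\lambda$, so $\vv_a=-\vv_b$ forces $2\lambda\equiv 0\md{p}$, hence $\lambda\equiv 0$ since $p$ is odd, contradicting $\overline{\vv}\ne\overline{0}$. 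Therefore $\rk_{\FF_p}\overline{L}\le n-2$, every $(n-1)\times(n-1)$ minor of $\overline{L}$ vanishes, and in particular $\det(\overline{L}[a|a])=\overline{0}$, i.e.\ $p$ divides $\det(L[a|a])=q$.

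The only subtle point — the step I expect to need the most care — is ensuring the integer eigenvector survives reduction mod $p$, which is precisely why one must take a \emph{primitive} representative of the line (or subspace) $\mathcal E$ rather than an arbitrary integer vector in it; it is also where the hypothesis ``$p$ odd'' genuinely enters, since $-x\equiv x$ would defeat the independence argument when $p=2$. I note for orientation that the crude divisibility $p\mid\mu\mid\prod_{\mu_r\ne 0}\mu_r^{m_r}=nq$, also from Theorem~\ref{mattree}, already settles the case $p\nmid n$ instantly; the rank argument above is what is needed to cover the remaining case $p\mid n$.
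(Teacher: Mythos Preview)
Your proof is correct and is essentially the same argument as the paper's, just phrased directly rather than by contradiction: the paper assumes $p\nmid q$, deduces that $\ker_{\FF_p}\overline{L}=\spn\{\overline{\one}\}$, and then derives from $x_a=-x_b$ (using $p$ odd) that a primitive integer $\mu$-eigenvector $x$ reduces to $0$ mod $p$, a contradiction; you instead exhibit $\overline{\vv}$ and $\overline{\one}$ as two independent elements of $\ker_{\FF_p}\overline{L}$ and conclude $p\mid q$ from the vanishing of the $(n-1)\times(n-1)$ minors. The ingredients---Matrix-Tree, a primitive integer eigenvector, the relation $\vv_a=-\vv_b$ from $\mu\in\Phi_{ab}^-$, and the use of ``$p$ odd'' to rule out $\overline{\vv}\in\spn\{\overline{\one}\}$---are identical.
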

\begin{proof}
Let $q$ be the number of spanning trees of $X$. By the Matrix-Tree Theorem, any $(n-1)\times(n-1)$ minor of the Laplacian matrix $L$ equals $q$. Therefore, for each odd prime $p$ not dividing $q$, the Laplacian matrix has rank $n-1$ over $\ZZ_p$, with kernel being the span of $\one$.

Now let $a$ and $b$ be two strongly cospectral vertices with respect to $L$. Suppose, for a contradiction, that some integer eigenvalue $\mu\in\Phi_{ab}^-$ has an odd prime factor $p$ that does not divide $q$. Let $x$ be an eigenvector of $\mu$, and assume without loss of generality that the gcd of its entries is $1$. Then since
\[Lx\equiv 0\pmod{p},\]
there is an integer $k$ such that
\[x\equiv k\one \pmod{p}.\]
As $\mu\in\Phi_{ab}^-$, the projection of $e_a$ and $e_b$ onto any subspace of the $\mu$-eigenspace must be opposites. Therefore $x_a=-x_b$. Hence $k\equiv -k \pmod{p}$. But as $p$ is an odd prime, $k$ is divisible by $p$. Hence $p$ divides all entries of $x$. This contradicts the assumption that all entries of $x$ have $\gcd$ $1$.
\end{proof}

Finally, we show a connection between the size of $\Phi_{ab}^-$ and the distance between $a$ and $b$.

\begin{theorem}\label{thm:scdist}
Let $a$ and $b$ be Laplacian strongly cospectral vertices in $X$. If $\abs{\Phi_{ab}^-}=k$, then any vertex at distance $k$ from $a$ is at distance at most $k$ from $b$. In particular, the distance between $a$ and $b$ is no more than $2k$. 
\end{theorem}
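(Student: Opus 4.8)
The plan is to translate Laplacian strong cospectrality into a single polynomial identity in $L$, and then read off the distance bound from the bandwidth behaviour of powers of $L$. Since $|\Phi_{ab}^-|=k$, I would set $q(t)=\prod_{\mu\in\Phi_{ab}^-}(t-\mu)$, a monic polynomial of degree $k$, and first show that $q(L)e_a=q(L)e_b$. Writing $L=\sum_r\mu_rF_r$: for $\mu_r\in\Phi_{ab}^+$ we have $F_re_a=F_re_b$; for $\mu_r\in\Phi_{ab}^-$ we have $q(\mu_r)=0$; and for $\mu_r\in\Phi_{ab}^0$ we have $F_re_a=F_re_b=0$. Hence
\[ q(L)e_a-q(L)e_b \;=\; \sum_r q(\mu_r)\,(F_re_a-F_re_b) \;=\; 0. \]

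Next I would invoke two elementary facts about $L=D-A$. First, for every $j\ge 0$ and every pair of vertices $u,v$, the entry $(L^j)_{uv}$ vanishes whenever $\operatorname{dist}(u,v)>j$, since in the expansion of $(D-A)^j$ each factor connects only equal or adjacent vertices. Second, if $\operatorname{dist}(w,a)=k$, then $(L^k)_{wa}=(-1)^k m$, where $m\ge 1$ is the number of shortest $w$-to-$a$ paths: a length-$k$ walk arising in the expansion of $(D-A)^k$ from $w$ to $a$ contributes a nonzero term only if at every step the distance to $a$ drops by exactly one, so the contributing walks are precisely the geodesics, each supplying a factor $(-1)^k$, and there is no cancellation.

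Combining these, I would fix $w$ with $\operatorname{dist}(w,a)=k$. Since $q$ is monic of degree $k$, $q(L)=L^k+\sum_{j<k}c_jL^j$, and by the first fact the lower-order terms contribute $0$ to the $(w,a)$ entry, so $(q(L))_{wa}=(L^k)_{wa}\ne 0$ by the second fact. The identity $q(L)e_a=q(L)e_b$ then gives $(q(L))_{wb}=(q(L))_{wa}\ne 0$, and applying the first fact to $q(L)$ (degree $k$) forces $\operatorname{dist}(w,b)\le k$, which is the first assertion. For the consequence, if $\operatorname{dist}(a,b)\le k$ there is nothing to prove; otherwise choose $w$ on a geodesic from $a$ to $b$ with $\operatorname{dist}(w,a)=k$, so $\operatorname{dist}(w,b)=\operatorname{dist}(a,b)-k$, and the bound just proved yields $\operatorname{dist}(a,b)-k\le k$, i.e. $\operatorname{dist}(a,b)\le 2k$.

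The only step needing genuine care is the second fact, $(L^k)_{wa}\ne 0$: one must exclude walks that linger at a vertex or make a sideways or backward step, and confirm that all geodesic contributions carry the same sign. I expect this to be handled cleanly by tracking $d_i:=\operatorname{dist}(u_i,a)$ along a contributing walk $w=u_0,u_1,\dots,u_k=a$: each step changes $d_i$ by at most $1$, so passing from $d_0=k$ to $d_k=0$ in $k$ steps forces $d_i=k-i$ for all $i$, hence every step traverses an edge decreasing the distance to $a$ and contributes exactly $-1$ from $L$; at least one such walk exists because $\operatorname{dist}(w,a)=k$ is finite. Everything else is bookkeeping with the spectral decomposition and the degree of $q$.
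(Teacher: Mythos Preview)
Your proof is correct and follows essentially the same route as the paper's: form the monic degree-$k$ polynomial $q(t)=\prod_{\mu\in\Phi_{ab}^-}(t-\mu)$, use strong cospectrality to get $q(L)e_a=q(L)e_b$, and then read off the distance claim from the support pattern of $q(L)$. Your justification of $(q(L))_{wa}\ne 0$ via the geodesic count $(-1)^k m$ is in fact more careful than the paper's, which asserts the entry is ``$\pm 1$'' without addressing the possibility of several geodesics.
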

\begin{proof}
Define as in Lemma \ref{lem:keylemma}
\[v^-:=\sum_{r\in \Phi_{ab}^-}F_re_a=\frac 12(e_a-e_b),\]
and set
\[W=\prod_{r\in\Phi_{ab}^-}(L-\mu_r I).\]
Then we have $Wv^-=0$. Hence $We_a=We_b$.

Now, notice that $W$ is the product of $k$ matrices in the form $(D_i-A)$, where $A$ is the adjacency matrix and each $D_i$ is some diagonal matrix. 
By the expanded form of $W$, the $ij$-entry is non-zero only if $d(i,j)\le k$, and is $\pm1$ if $d(i,j)=k$. Since the $a$-th column and the $b$-th column of $W$ are identical, we conclude that any vertex at distance $k$ from $a$ is at distance at most $k$ from $b$.
\end{proof}

As a consequence, we have proved the following result due to Coutinho and Liu \cite{LaPST}. Two vertices are called \textsl{twins} if they have the same neighbors.

\begin{corollary}\cite{LaPST}\label{scdist}
Let $a$ and $b$ be Laplacian strongly cospectral vertices in $X$. If $\abs{\Phi_{ab}^-}=1$, then $a$ and $b$ are twin vertices. We allow twins to be adjacent.
\end{corollary}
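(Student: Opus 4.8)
The plan is to specialize Theorem~\ref{thm:scdist} to the case $k=1$. It then asserts that every vertex at distance $1$ from $a$ is at distance at most $1$ from $b$; equivalently, every neighbor of $a$ is either equal to $b$ or a neighbor of $b$, so $N(a)\subseteq N(b)\cup\{b\}$. The hypotheses on $a$ and $b$ are symmetric (strong cospectrality is a symmetric relation and $\Phi_{ab}^-=\Phi_{ba}^-$), so applying the same theorem with $a$ and $b$ interchanged gives $N(b)\subseteq N(a)\cup\{a\}$.

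Now I would split into two cases according to whether $a$ and $b$ are adjacent. If $a\not\sim b$, then $b\notin N(a)$ and $a\notin N(b)$, so the two inclusions above reduce to $N(a)\subseteq N(b)$ and $N(b)\subseteq N(a)$; hence $N(a)=N(b)$ and $a,b$ are non-adjacent twins. If $a\sim b$, then deleting $b$ from the first inclusion and using that there are no loops (so $b\notin N(b)$) gives $N(a)\setminus\{b\}\subseteq N(b)\setminus\{a\}$, and symmetrically $N(b)\setminus\{a\}\subseteq N(a)\setminus\{b\}$; therefore $N(a)\setminus\{b\}=N(b)\setminus\{a\}$, which is precisely the twin condition under the stated convention that twins may be adjacent.

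There is no real obstacle here---only the bookkeeping in the adjacent case, where ``same neighbors'' must be read as $N(a)\setminus\{b\}=N(b)\setminus\{a\}$ since $b\in N(a)$ and $a\in N(b)$ can never be matched in the absence of loops. For completeness I would also note the more direct route that avoids quoting Theorem~\ref{thm:scdist} as a black box: with $\Phi_{ab}^-=\{\mu\}$ and $W=L-\mu I$, the argument in the proof of Theorem~\ref{thm:scdist} shows $We_a=We_b$; comparing $i$-th entries for $i\notin\{a,b\}$ and using $W_{ij}=-A_{ij}$ off the diagonal yields $A_{ia}=A_{ib}$ for all such $i$, which is the twin condition immediately.
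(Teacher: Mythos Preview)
Your proposal is correct and follows the paper's intended route: the paper states the corollary immediately after Theorem~\ref{thm:scdist} with the phrase ``As a consequence, we have proved the following result,'' so specializing to $k=1$ is exactly what is meant. Your careful case split on whether $a\sim b$, and your alternative direct computation from $We_a=We_b$ with $W=L-\mu I$, both fill in details the paper leaves implicit; the latter is in fact the cleaner argument, since comparing off-diagonal entries gives $A_{ia}=A_{ib}$ for all $i\notin\{a,b\}$ in one line.
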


\section{Proper Laplacian fractional revival}
We now characterize proper LaFR on connected graphs. The following lemma shows that $\alpha=\gamma$ in Equation \eqref{def:blockFR}, provided $\beta\ne 0$. This is an important observation as it leads to Laplacian strong cospectrality.

\begin{lemma}\label{lem:cospectral}
If $X$ admits proper LaFR between $a$ and $b$ at time $\tau$, then $U(\tau)_{aa}=U(\tau)_{bb}$. Moreover, each spectral idempotent $F_r$ of $L$ satisfies $(F_r)_{aa}=(F_r)_{bb}$.
\end{lemma}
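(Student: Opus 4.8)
The plan is to handle the two claims separately, using only the $0$-eigenvalue of $L$ for the first and the commutativity of the transition matrices for the second.

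For the first claim, recall that $L\one=\0$, so that $U(t)\one=e^{it\cdot 0}\one=\one$ for every $t$; in particular every row of $U(\tau)$ has entry-sum $1$. By hypothesis $U(\tau)e_a=\alpha e_a+\beta e_b$, and since $U(\tau)$ is real symmetric this forces $U(\tau)e_b=\beta e_a+\gamma e_b$. Reading off rows (again using symmetry), the row of $U(\tau)$ indexed by $a$ has entries $\alpha,\beta$ and zeros elsewhere, while the row indexed by $b$ has entries $\beta,\gamma$ and zeros elsewhere. Hence $\alpha+\beta=1=\beta+\gamma$, so $U(\tau)_{aa}=\alpha=\gamma=U(\tau)_{bb}$; write $\delta$ for this common value and note that the $\{a,b\}$-block of $U(\tau)$ is $\pmat{\delta&\beta\\\beta&\delta}$.

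For the ``moreover'' part, the key point is that $U(\tau)$ and $U(t)$ commute for every $t$, since $U(\tau)U(t)=\exp(i(\tau+t)L)=U(t)U(\tau)$. Partition the index set into $\{a,b\}$ and its complement; by the block form \eqref{def:blockFR} and the first claim, $U(\tau)$ is block diagonal with respect to this partition with $\{a,b\}$-block $\pmat{\delta&\beta\\\beta&\delta}$, and, since $U(t)$ is symmetric, I may write it in the same block form as $\pmat{P(t)&Q(t)\\Q(t)^{T}&S(t)}$ with $P(t)$ the symmetric $\{a,b\}$-block. Multiplying the identity $U(\tau)U(t)=U(t)U(\tau)$ out block-by-block, the top-left block gives $\pmat{\delta&\beta\\\beta&\delta}P(t)=P(t)\pmat{\delta&\beta\\\beta&\delta}$. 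Because $\beta\ne 0$ (the revival is proper), this is equivalent to saying that $\pmat{0&1\\1&0}=\beta^{-1}\!\left(\pmat{\delta&\beta\\\beta&\delta}-\delta I\right)$ commutes with $P(t)$; for a symmetric $2\times 2$ matrix $P(t)$ this forces its two diagonal entries to coincide, i.e.\ $U(t)_{aa}=U(t)_{bb}$ for all real $t$. Expanding via $U(t)=\sum_r e^{it\mu_r}F_r$ yields $\sum_r e^{it\mu_r}\big((F_r)_{aa}-(F_r)_{bb}\big)=0$ for all $t\in\RR$, and since the $\mu_r$ are distinct the exponentials $t\mapsto e^{it\mu_r}$ are linearly independent, so $(F_r)_{aa}=(F_r)_{bb}$ for every $r$.

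I do not anticipate a serious obstacle; the only steps needing care are the block-matrix bookkeeping (which entries of $U(\tau)$ vanish, and that commutativity of a block-diagonal matrix with a general symmetric matrix descends to the diagonal blocks) and the final appeal to linear independence of the functions $e^{it\mu_r}$.
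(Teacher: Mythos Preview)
Your proof is correct and takes a genuinely different route from the paper's on both claims. For $\alpha=\gamma$, the paper argues that the $2\times 2$ block $M$ has $(1,1)^T$ as an eigenvector (because $U(\tau)\one=\one$ and $U(\tau)$ is block diagonal) and, since $\beta\ne 0$ gives $M$ two distinct eigenvalues, the other eigenvector must be $(1,-1)^T$, forcing equal diagonal entries. Your row-sum argument extracts the same information from $U(\tau)\one=\one$ more directly and avoids any discussion of the eigenstructure of $M$. For the second claim, the paper multiplies $U(\tau)=\sum_r e^{i\tau\mu_r}F_r$ by a fixed $F_r$ and restricts to the $\{a,b\}$-block to obtain $e^{i\tau\mu_r}\wt{F_r}=M\wt{F_r}$; thus the column space of $\wt{F_r}$ sits inside a single eigenspace of $M$, and since those eigenspaces are spanned by $(1,\pm1)^T$ the diagonal entries of $\wt{F_r}$ agree. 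You instead use that $U(\tau)$ commutes with the entire one-parameter family $\{U(t)\}$, pass to the $\{a,b\}$-block to get $U(t)_{aa}=U(t)_{bb}$ for all $t$, and finish with linear independence of the characters $e^{it\mu_r}$. The paper's argument is a bit more localized (one $F_r$ at a time, and the same computation is reused verbatim in Lemma~\ref{constant}), while yours yields the pleasant intermediate fact that $U(t)_{aa}=U(t)_{bb}$ holds for every $t$. One small slip: $U(\tau)=\exp(i\tau L)$ is complex symmetric, not real symmetric; but you only use symmetry, so nothing in the argument is affected.
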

\begin{proof}
We already know that
\[U(\tau) = \pmat{
    \begin{matrix}
        \alpha&\beta\\ \beta&\gamma
    \end{matrix}
    &\bigzero\\
    \bigzero&N}.\]
Let $M$ be the submatrix of $U(\tau)$ indexed by $a$ and $b$. Since $\beta \ne 0$, $M$ has two distinct eigenvalues. Moreover, as $\one$ is an eigenvector of $L$, the eigenvectors of $M$ must be
\[\pmat{1\\1},\quad \pmat{1\\-1}.\]
Therefore $\alpha=\gamma$.

For the second statement, let the spectral decomposition of $L$ be
\[L = \sum_r \mu_r F_r.\] 
Then
\[\sum_r e^{it\mu_r} F_r= \pmat{M & \0 \\ \0& N}.\]
Multiply both sides by $F_r$ and we get
\[e^{it\mu_r} F_r = \pmat{M & \0 \\ \0& N}F_r.\]
As before, let $\wt{F_r}$ denote the submatrix of $F_r$ indexed by $a$ and $b$. We have
\[e^{it\mu_r} \wt{F_r} = M \wt{F_r},\]
from which it follows that the column space of $\wt{F_r}$ is a subspace of an eigenspace of $M$. Therefore, $(F_r)_{aa}=(F_r)_{bb}$.
\end{proof}

We are now ready to show that proper LaFR can only happen between Laplacian strongly cospectral vertices.

\begin{theorem}\label{LaFRSC}
If $X$ admits proper LaFR between $a$ and $b$, then $a$ and $b$ are Laplacian strongly cospectral.
\end{theorem}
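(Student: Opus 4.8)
The plan is to leverage Lemma~\ref{lem:cospectral} together with the block structure of $U(\tau)$ to show that every spectral idempotent $F_r$ of $L$ sends $e_a$ to $\pm F_r e_b$. From the proof of Lemma~\ref{lem:cospectral} we already know that for each $r$, the column space of $\wt{F_r}$ (the $\{a,b\}$ principal submatrix of $F_r$) lies inside an eigenspace of the $2\times 2$ matrix $M$, whose eigenvectors are $(1,1)^T$ and $(1,-1)^T$. So $\wt{F_r}$ is a multiple of either $\pmat{1&1\\1&1}$ or $\pmat{1&-1\\-1&1}$. What remains is to upgrade this statement about the $2\times 2$ submatrix to the full columns $F_r e_a$ and $F_r e_b$.

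First I would multiply the identity $\sum_s e^{it\mu_s}F_s = \pmat{M&\0\\\0&N}$ on the right by $e_a$ and isolate, using $F_sF_r=\delta_{sr}F_r$, the relation $e^{i\tau\mu_r}F_r e_a = \pmat{M&\0\\\0&N}F_r e_a$; likewise for $e_b$. This says the vector $F_r e_a$ (respectively $F_r e_b$) is an eigenvector of the block-diagonal matrix $\pmat{M&\0\\\0&N}$ for eigenvalue $e^{i\tau\mu_r}$, so it is supported either entirely on coordinates $\{a,b\}$ or entirely on the complement — and on $\{a,b\}$ it must be a scalar multiple of $(1,1)^T$ or $(1,-1)^T$. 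But the block-off-diagonal entries of $F_r$ in rows $a,b$ and columns $\notin\{a,b\}$ are zero by exactly this argument applied coordinatewise (the $\{a,b\}$-rows of $F_r e_j$ must be an eigenvector of $M$, while being the $j$-column of the block-diagonal matrix forces those rows to vanish for $j\notin\{a,b\}$, since eigenvectors supported off $\{a,b\}$ have zero $\{a,b\}$-part and $(1,\pm1)^T$ has nonzero entries). Hence $F_r e_a$ and $F_r e_b$ agree on all coordinates outside $\{a,b\}$, while on $\{a,b\}$ they are proportional to $(1,1)^T$ or $(1,-1)^T$ with the same proportionality constant dictated by $\wt{F_r}$; combining, $F_r e_a = \pm F_r e_b$ with the sign determined by which eigenvector of $M$ the column $\wt{F_r}$ is a multiple of. This is precisely Laplacian strong cospectrality.

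The main obstacle is the bookkeeping in the step where one rules out that $F_r e_a$ has nonzero entries \emph{both} on $\{a,b\}$ and off it — i.e., showing the two eigenspaces of $\pmat{M&\0\\\0&N}$ genuinely split along the partition $\{a,b\}\mid\text{rest}$ even when $e^{i\tau\mu_r}$ happens to be a common eigenvalue of $M$ and $N$. The clean way around this is not to decompose $F_r e_a$ into an eigenbasis of the big matrix at all, but to argue directly: by Lemma~\ref{lem:cospectral} the off-block entries $(F_r)_{aj}$ and $(F_r)_{bj}$ for $j\notin\{a,b\}$ can be extracted by noting that $F_r$ commutes with $U(\tau)$ (both are polynomials in $L$), so $F_r$ itself must be block-diagonal with respect to the partition $\{a,b\}\mid\text{rest}$ whenever $M$ and $N$ share no eigenvalue, and when they do share an eigenvalue one instead observes that the $\{a,b\}$-rows of $F_r$ restricted to the "rest" columns are forced to zero because those rows, as a pair, must be $0$ or a scalar multiple of $(1,\pm1)$ — but a nonzero multiple of $(1,\pm1)$ in a single column would contradict symmetry of $F_r$ combined with the already-established form of $\wt{F_r}$. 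I expect the author's proof to short-circuit most of this by simply multiplying the block equation $e^{i\tau\mu_r}F_r = \pmat{M&\0\\\0&N}F_r$ on both sides and reading off columns, which is cleaner than my eigenspace-splitting phrasing; either way the content is the same.
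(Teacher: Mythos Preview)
Your argument contains a false intermediate claim: that ``the block-off-diagonal entries of $F_r$ in rows $a,b$ and columns $\notin\{a,b\}$ are zero.'' This is not true. On $P_3$ with end-vertices $a,b$, the projection onto the eigenspace for $\mu=3$ is $\tfrac{1}{6}\left(\begin{smallmatrix}1&-2&1\\-2&4&-2\\1&-2&1\end{smallmatrix}\right)$, whose $(a,\text{middle})$ entry is $-1/3\ne 0$. Your parenthetical justification conflates columns of $F_r$ with columns of $U(\tau)$: it is $U(\tau)$ that is block-diagonal, not $F_r$. The workarounds you sketch still pursue this same false target, and the subsequent line ``$F_r e_a$ and $F_r e_b$ agree on all coordinates outside $\{a,b\}$'' is likewise wrong when $\mu_r\in\Phi_{ab}^-$ (they are negatives there, not equal). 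A correct version of your approach drops the vanishing claim entirely: from $e^{i\tau\mu_r}F_r = U(\tau)F_r$, the pair $\bigl((F_r)_{aj},(F_r)_{bj}\bigr)$ is, for \emph{every} $j$, an eigenvector of $M$ for the \emph{same} eigenvalue $e^{i\tau\mu_r}$; since $\beta\ne 0$ forces $M$ to have simple spectrum, row $a$ of $F_r$ is a global $\pm 1$ multiple of row $b$, and symmetry of $F_r$ then gives $F_r e_a=\pm F_r e_b$.

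The paper's proof sidesteps all of this. It applies $F_r$ to the \emph{vector} equation $U(\tau)e_a=\alpha e_a+\beta e_b$ rather than to the block-matrix equation, obtaining
\[
e^{i\tau\mu_r}F_r e_a=\alpha F_r e_a+\beta F_r e_b.
\]
Since $\beta\ne 0$, this one line already exhibits $F_r e_b$ as a scalar multiple of $F_r e_a$ as full vectors in $\RR^n$. Lemma~\ref{lem:cospectral} then supplies $(F_r)_{aa}=(F_r)_{bb}$, i.e.\ $\|F_r e_a\|=\|F_r e_b\|$, which forces that (necessarily real) scalar to be $\pm 1$. No block structure of $F_r$, no eigenspace splitting, and no case analysis on whether $M$ and $N$ share an eigenvalue are needed.
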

\begin{proof}
Let $L$ the Laplacian matrix of $X$ with spectral decomposition
\[L = \sum_r \mu_r F_r.\]
Suppose proper LaFR occurs between $a$ and $b$ at time $\tau$. Then there are complex numbers $\alpha$ and $\beta$, where $\beta\ne 0$, such that
\[\sum_r e^{it\mu_r} F_re_a =\alpha e_a + \beta e_b.\]
For each $r$, multiplying both sides by $F_r$ yields
\[e^{it\mu_r} F_r e_a = \alpha F_r e_a + \beta F_r e_b.\]
As $\beta$ is non-zero, $F_r e_b$ is a scalar multiple of $F_r e_a$. Finally, Lemma \ref{lem:cospectral} tells us that $(F_r)_{aa}=(F_r)_{bb}$. Hence it must be that $F_r e_a =\pm F_r e_b$.
\end{proof}

Our next result shows that each eigenvalue in $\Phi_{ab}^+$ or $\Phi_{ab}^-$ respects the class it belongs to. It becomes useful when we try to determine the time when proper LaFR occurs.

\begin{lemma}\label{constant}
Suppose proper LaFR occurs between $a$ and $b$ at time $\tau$. Then the function $\mu_r\mapsto e^{i\tau\mu_r}$ is constant within each of $\Phi_{ab}^+$ and $\Phi_{ab}^-$.
\end{lemma}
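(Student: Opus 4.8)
The plan is to read the value of $e^{i\tau\mu_r}$ off directly from the eigenstructure of the $\{a,b\}$-block of $U(\tau)$, using only what has already been established. Recall that proper LaFR between $a$ and $b$ at time $\tau$ means $U(\tau)e_a=\alpha e_a+\beta e_b$ with $\beta\ne 0$, and by Lemma~\ref{lem:cospectral} the principal submatrix $M$ of $U(\tau)$ indexed by $a$ and $b$ equals $\pmat{\alpha&\beta\\\beta&\alpha}$; its eigenvectors are $\pmat{1\\1}$ and $\pmat{1\\-1}$, with eigenvalues $\alpha+\beta$ and $\alpha-\beta$ respectively.

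First I would use the spectral expansion $U(\tau)=\sum_s e^{i\tau\mu_s}F_s$. Applying both sides to $e_a$, then multiplying on the left by a fixed $F_r$ and using $F_rF_s=\delta_{rs}F_r$, gives
\[e^{i\tau\mu_r}F_re_a=\alpha F_re_a+\beta F_re_b,\]
exactly the relation obtained in the proof of Theorem~\ref{LaFRSC}.

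Next I would split into two cases according to the class of $\mu_r$. If $\mu_r\in\Phi_{ab}^+$, then $F_re_a=F_re_b\ne 0$ by definition of $\Phi_{ab}^+$, so the right-hand side equals $(\alpha+\beta)F_re_a$; cancelling the nonzero vector $F_re_a$ yields $e^{i\tau\mu_r}=\alpha+\beta$, a quantity independent of $r$. If instead $\mu_r\in\Phi_{ab}^-$, then $F_re_a=-F_re_b\ne 0$, the right-hand side equals $(\alpha-\beta)F_re_a$, and the same cancellation gives $e^{i\tau\mu_r}=\alpha-\beta$, again independent of $r$. Hence $\mu_r\mapsto e^{i\tau\mu_r}$ is constant on each of $\Phi_{ab}^+$ and $\Phi_{ab}^-$; as a byproduct, since $\beta\ne 0$, the two constants $\alpha\pm\beta$ are distinct, so the function takes different values on the two classes.

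There is no real obstacle: the only thing to verify is that cancelling $F_re_a$ is legitimate, which holds because $F_re_a\ne 0$ for every $r$ with $\mu_r\in\Phi_{ab}^+\cup\Phi_{ab}^-$ by the very definition of these classes. The lemma is thus an immediate consequence of Lemma~\ref{lem:cospectral} together with the strong cospectrality established in Theorem~\ref{LaFRSC}.
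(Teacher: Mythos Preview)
Your proof is correct and follows essentially the same approach as the paper: both multiply the LaFR relation by $F_r$ to obtain $e^{i\tau\mu_r}F_re_a=\alpha F_re_a+\beta F_re_b$ and then use strong cospectrality to read off the value of $e^{i\tau\mu_r}$ according to whether $\mu_r\in\Phi_{ab}^+$ or $\mu_r\in\Phi_{ab}^-$. Your version is in fact slightly more direct, substituting $F_re_b=\pm F_re_a$ and cancelling the nonzero vector, whereas the paper phrases the same step as an eigenspace/column-space argument for the $2\times 2$ block; the content is identical, and your explicit identification of the two constants as $\alpha+\beta$ and $\alpha-\beta$ is a nice bonus.
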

\begin{proof}
We use a similar argument to the proof of Lemma \ref{lem:cospectral}. 
Let the spectral decomposition of $L$ be
\[L = \sum_r \mu_r F_r.\] 
Assuming the first two rows of $L$ are indexed by $a$ and $b$, we have
\[\sum_r e^{it\mu_r} F_r= \pmat{M & \0 \\ \0& N},\]
Since the LaFR proper, $M$ is non-diagonal, and so it has two distinct eigenvalues with eigenvectors
\[\pmat{1\\1},\quad \pmat{1\\-1},\]
respectively. Now for each $r$,
\[e^{it\mu_r} F_r = \pmat{M & \0 \\ \0& N}F_r.\]
If $\wt{F_r}$ denotes the submatrix of $F_r$ indexed by $a$ and $b$, then
\[e^{it\mu_r} \wt{F_r} = M \wt{F_r},\]
that is, the column space of $\wt{F_r}$ is a subspace of an eigenspace of $M$. On the other hand, we know from the proof of Theorem \ref{scdegbound} that each $\wt{F_r}$ is rank-one; more specifically, if $\mu_r\in \Phi_{ab}^+$,
\[
    \wt{F_r} =
    c_r \pmat{1&1\\1&1}\]
and if $\mu_r\in \Phi_{ab}^-$.
\[\wt{F_r}=c_r \pmat{1&-1\\-1&1}.\]
Thus, if $\mu_r$ and $\mu_s$ both lie in $\Phi_{ab}^+$ or both lie in $\Phi_{ab}^-$, then $\wt{F_r}$ and $\wt{F_s}$ are scalar multiples of each other, and so $e^{i\tau\mu_r}= e^{i\tau\mu_s}$.
\end{proof}

As a consequence, we get a ratio condition on the eigenvalues when proper LaFR occurs.

\begin{corollary}\label{ratiocondition}

Suppose $X$ admits proper LaFR between $a$ and $b$. Then 
\begin{equation*}
\frac{\mu_i-\mu_j}{\mu_r-\mu_s} \in \QQ
\end{equation*}
for all  $\mu_i, \mu_j, \mu_r, \mu_s \in \Phi_{a,b}^+$, or  for all  $\mu_i, \mu_j, \mu_r, \mu_s \in \Phi_{a,b}^-$, with $\mu_r\neq \mu_s$.
\end{corollary}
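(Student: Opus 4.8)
The plan is to deduce the ratio condition directly from Lemma~\ref{constant}. Suppose proper LaFR occurs between $a$ and $b$ at some time $\tau$. By Lemma~\ref{constant}, the map $\mu_r\mapsto e^{i\tau\mu_r}$ is constant on $\Phi_{ab}^+$ and (separately) constant on $\Phi_{ab}^-$. Translating the statement ``$e^{i\tau\mu_i}=e^{i\tau\mu_j}$'' into arithmetic, this says $\tau(\mu_i-\mu_j)\in 2\pi\ZZ$ whenever $\mu_i,\mu_j$ lie in the same class. So for any four eigenvalues $\mu_i,\mu_j,\mu_r,\mu_s$ all in $\Phi_{ab}^+$ (or all in $\Phi_{ab}^-$), with $\mu_r\neq\mu_s$, there are integers $m,m'$ with $m'\neq 0$ (using $\mu_r\neq\mu_s$ and $\tau\neq 0$) such that
\[
\tau(\mu_i-\mu_j)=2\pi m,\qquad \tau(\mu_r-\mu_s)=2\pi m'.
\]
Dividing the first equation by the second eliminates $\tau$ and $2\pi$, giving
\[
\frac{\mu_i-\mu_j}{\mu_r-\mu_s}=\frac{m}{m'}\in\QQ,
\]
which is exactly the claimed condition.

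The only subtlety is justifying that $\tau\neq 0$ and that $m'\neq 0$. For the former: if $\tau=0$ then $U(\tau)=I$, which is diagonal, contradicting that the LaFR is proper ($\beta\neq 0$). For the latter: $m'=0$ would force $\mu_r=\mu_s$, contrary to hypothesis. With those two points dispatched, the argument is a one-line division, so I do not expect any real obstacle here; the substantive work has already been done in Lemma~\ref{constant}. One should also note (though it is not strictly needed for the statement as phrased) that the case where $\Phi_{ab}^+$ or $\Phi_{ab}^-$ has fewer than two distinct eigenvalues makes the corresponding quantifier vacuous, so the statement holds trivially in that situation.

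A remark on framing for the write-up: it may be cleanest to first record the intermediate fact that $\tau(\mu_i-\mu_j)\in 2\pi\ZZ$ for eigenvalues in the same class as an explicit consequence of Lemma~\ref{constant}, since this ``integrality of scaled differences'' is the real content and the rationality of ratios is an immediate corollary. This also sets up cleanly for the later algorithmic results (Theorem~\ref{thm:polytime}), where one presumably wants to solve for the smallest positive $\tau$ satisfying a finite system of such congruences. But for the present corollary, the two-equation division above suffices.
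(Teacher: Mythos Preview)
Your proposal is correct and follows essentially the same argument as the paper: invoke Lemma~\ref{constant} to get $\tau(\mu_i-\mu_j)\in 2\pi\ZZ$ for eigenvalues in the same class, then take the ratio. Your write-up is slightly more careful in explicitly justifying $\tau\neq 0$ and $m'\neq 0$, but the substance is identical.
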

\begin{proof}
For $\mu_i,\mu_j$ in the same $\Phi_{ab}^+$ and $\Phi_{ab}^-$ class, $e^{i\tau\mu_i}=e^{i\tau\mu_j}$. Hence \[\tau(\mu_i-\mu_j)\in 2\pi\ZZ.\]
The result follows by taking the ratio of $\tau(\mu_i-\mu_j)$ and $\tau(\mu_r-\mu_s)$.
\end{proof}

We cite a powerful result due to Godsil \cite[Theorem 6.1]{g12b}; it bounds the algebraic degrees of the elements in $\Phi_{ab}^+$ and $\Phi_{ab}^-$.

\begin{theorem}\cite{g12b}\label{harmonious}
Let $\Phi$ be a set of real algebraic integers which is closed under taking algebraic conjugates. Suppose for all $\mu_i,\mu_j,\mu_r,\mu_s \in \Phi$ with $\mu_r\ne \mu_s$, we have
\[
\frac{\mu_i - \mu_j}{\mu_r - \mu_s} \in \QQ.
\]
Then the elements of $\Phi$ are either integers or quadratic integers, and, moreover, if $|\Phi| = n$, then there are integers $a$, $\Delta$ (square-free), and $\{b_r\}_{r = 1}^n$ such that $\mu \in \Phi$ implies that, for some $r$,
\[\mu = \frac{a + b_r \sqrt{\Delta}}{2}.\] 
\end{theorem}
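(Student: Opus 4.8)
The plan is to reduce everything to a statement about quadratic fields: show that $\QQ(\Phi)$ is either $\QQ$ or a single quadratic field $\QQ(\sqrt\Delta)$, and then read off the normal form from the description of the ring of integers of a quadratic field. If every element of $\Phi$ is rational, then each is an algebraic integer lying in $\ZZ$, and the conclusion is trivial (take $\Delta=1$, $a=0$, $b_r=2\mu_r$). So assume some $\mu_1\in\Phi$ is irrational; since $\Phi$ is closed under conjugation, it contains a Galois conjugate $\mu_1'\neq\mu_1$ of $\mu_1$. Set $d=\mu_1'-\mu_1\neq 0$. The ratio hypothesis immediately gives that every $\mu\in\Phi$ lies in the coset $\mu_1+\QQ d$ (take $(\mu-\mu_1)/(\mu_1'-\mu_1)\in\QQ$), so $\QQ(\Phi)\subseteq\QQ(\mu_1,d)$.

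The key step is to prove that $d^2\in\QQ$. Let $K$ be the normal closure of $\QQ(\Phi)$ over $\QQ$ with Galois group $G=\mathrm{Gal}(K/\QQ)$. For $\sigma\in G$, closure of $\Phi$ under conjugation gives $\sigma(\mu_1),\sigma(\mu_1')\in\Phi$, and these are distinct, so by hypothesis $\sigma(d)/d=(\sigma(\mu_1')-\sigma(\mu_1))/(\mu_1'-\mu_1)=:e_\sigma\in\QQ$. Thus every conjugate of $d$ is a rational multiple of $d$; in particular $d$ is totally real, and the norm $\prod_{\sigma\in G}\sigma(d)=(\prod_\sigma e_\sigma)\,d^{|G|}$ is rational with $\prod_\sigma e_\sigma\neq 0$, whence $d^{|G|}\in\QQ$. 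Consequently the minimal polynomial of $d$ divides $x^{|G|}-d^{|G|}$, which has at most two real roots; since $d$ is totally real, $\deg_\QQ d\le 2$. If $d\in\QQ$, then every conjugate of $\mu_1$ lies in $\mu_1+\QQ$, so the minimal polynomial of $\mu_1$ equals $p(x-\mu_1)$ for a monic $p\in\QQ[y]$ of degree $m=\deg_\QQ\mu_1$; comparing the coefficients of $x^{m-1}$ forces $m\mu_1\in\QQ$, contradicting the irrationality of $\mu_1$. Hence $d\notin\QQ$, so $\deg_\QQ d=2$, and (being totally real with $d^{|G|}\in\QQ$) $d$ has minimal polynomial $x^2-d^2$, so $d^2\in\QQ$ is a non-square; let $\Delta$ be its square-free part, so $\QQ(d)=\QQ(\sqrt\Delta)$.

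Next I would pin down $\mu_1$ itself: every conjugate of $\mu_1$ lies in $\Phi\subseteq\mu_1+\QQ d=\mu_1+\QQ\sqrt\Delta$, say $\mu_1+q_k\sqrt\Delta$ with $q_k\in\QQ$, $q_1=0$; then the trace of $\mu_1$ equals $m\mu_1+(\sum_k q_k)\sqrt\Delta\in\QQ$, forcing $\mu_1\in\QQ+\QQ\sqrt\Delta$. Therefore $\QQ(\Phi)=\QQ(\sqrt\Delta)$, and each $\mu\in\Phi$ is an algebraic integer of this quadratic field, hence $\mu=(a_\mu+b_\mu\sqrt\Delta)/2$ with $a_\mu,b_\mu\in\ZZ$ of equal parity — the standard description of the ring of integers of $\QQ(\sqrt\Delta)$, uniform over the cases $\Delta\equiv 1$ and $\Delta\equiv 2,3\pmod 4$. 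Finally, for any $\mu,\nu\in\Phi$ we have $\mu-\nu\in\QQ d=\QQ\sqrt\Delta$, so the rational parts coincide; hence $a_\mu$ is a single integer $a$ independent of $\mu$, and $\mu=(a+b_\mu\sqrt\Delta)/2$. Taking $\{b_r\}$ to be the (at most $n$) integers $b_\mu$ yields the statement.

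I expect the main obstacle to be the step $d^2\in\QQ$. This is where the two hypotheses must be combined — closure under conjugation is what puts $\sigma(\mu_1),\sigma(\mu_1')$ back into $\Phi$, and the difference-ratio condition is what makes $\sigma(d)/d$ rational for every $\sigma$ — and where one must convert "$d$ totally real with $d^{|G|}\in\QQ$" into "$\deg_\QQ d\le 2$" via a count of real roots of $x^{|G|}-d^{|G|}$. The degenerate possibility $d\in\QQ$ also has to be excluded separately by the coefficient-comparison argument; everything after that (identifying $\mu_1$, invoking the ring of integers of $\QQ(\sqrt\Delta)$, and extracting the common $a$) is routine.
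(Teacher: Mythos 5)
Your argument is correct, but note that there is no proof in this paper to compare it against: the theorem is imported verbatim from Godsil \cite[Theorem 6.1]{g12b}, and the authors only cite it. Judged on its own, your proposal is a sound, self-contained derivation in the same spirit as Godsil's original argument: the two hypotheses are combined exactly where they must be (closure under conjugation puts $\sigma(\mu_1),\sigma(\mu_1')$ back in $\Phi$, the ratio condition makes $\sigma(d)/d\in\QQ$), the norm argument gives $d^{|G|}\in\QQ$, the count of real roots of $x^{|G|}-d^{|G|}$ together with total realness gives $\deg_\QQ d\le 2$, the trace argument correctly rules out $d\in\QQ$ and pins $\mu_1$ into $\QQ(\sqrt{\Delta})$, and the final appeal to the ring of integers of $\QQ(\sqrt{\Delta})$ plus the observation that all differences lie in $\QQ\sqrt{\Delta}$ (forcing a common rational part $a$) delivers exactly the stated normal form $\mu=(a+b_r\sqrt{\Delta})/2$. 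The only cosmetic points: the all-rational case is handled with $\Delta=1$, which is fine but could equally be folded into the general statement with $b_r=0$; and the claim that the degree-two minimal polynomial must be $x^2-d^2$ deserves the one-line justification you gesture at (the only two real roots of $x^{N}-d^{N}$ are negatives of each other, so the second conjugate is $-d$). Neither is a gap.
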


With all the theory we developed so far, we are now ready to give a characterization of proper LaFR.

\begin{theorem}\label{CharThm}
(Characterization Theorem for proper LaFR) Let $X$ be a graph on at least three vertices. For vertices $a$ and $b$, let
\[g = \gcd \left\{\mu_r-\mu_s:\mu_r, \mu_s \in \Phi_{ab}^+ \text{ or } \mu_r, \mu_s\Phi_{ab}^- \right\}.\]
proper LaFR occurs between $a$ and $b$ if and only if the following conditions hold:
\begin{enumerate}[(i)]
    \item $a$ and $b$ are strongly cospectral vertices.
    \item $\Phi_{ab}^+\cup \Phi_{ab}^-\subseteq \ZZ^{\ge 0}$
    \item There is some element in $\Phi_{ab}^-$ that is not divisible by $g$.
\end{enumerate}
Moreover, if proper LaFR occurs between $a$ and $b$ at time $\tau$, then $\tau$ is an integer multiple of $2\pi/g$.
\end{theorem}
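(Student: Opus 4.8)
The plan is to prove the Characterization Theorem by combining the necessary conditions developed in the preceding sections with a direct construction for the converse. For the forward direction, assume proper LaFR occurs between $a$ and $b$ at time $\tau$. Condition (i) is immediate from Theorem \ref{LaFRSC}. For condition (ii), I would use Lemma \ref{constant}: the function $\mu_r \mapsto e^{i\tau\mu_r}$ is constant on each of $\Phi_{ab}^+$ and $\Phi_{ab}^-$, so differences within a class lie in $\frac{2\pi}{\tau}\ZZ$, giving the rationality of all ratios $(\mu_i-\mu_j)/(\mu_r-\mu_s)$ (Corollary \ref{ratiocondition}). Then Theorem \ref{algclosed} says each class is closed under algebraic conjugation, so Theorem \ref{harmonious} applies: the eigenvalues in $\Phi_{ab}^+ \cup \Phi_{ab}^-$ are integers or quadratic integers of the form $(a + b_r\sqrt\Delta)/2$. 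To upgrade "quadratic integer" to "nonnegative integer," I would argue that if $\sqrt\Delta$ actually appeared, some conjugate eigenvalue would be negative (the Laplacian is positive semidefinite, so this is impossible) or would violate strong cospectrality in a way that contradicts having distinct eigenvalues summing correctly — more carefully, one shows $\Phi_{ab}^+$ contains $0$, which is rational, and the harmonious structure with $0$ in the set forces $a = 0$ or $\Delta$ to disappear; in any case the eigenvalues must be rational algebraic integers, hence integers, and nonnegativity follows from positive semidefiniteness of $L$. This establishes (ii).

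For condition (iii), having established (ii), the constancy of $e^{i\tau\mu_r}$ within each class means $\tau g' \in 2\pi\ZZ$ where $g'$ is the gcd of differences within $\Phi_{ab}^+$ and within $\Phi_{ab}^-$ separately — I would need to be slightly careful here about whether $g$ is the gcd taken over the union of both difference sets or each separately; reading the statement, $g$ is over the combined set, so $\tau g \in 2\pi\ZZ$ as well, proving the final claim $\tau \in \frac{2\pi}{g}\ZZ$ once we know proper LaFR occurs. Then $U(\tau)$ restricted to the $\{a,b\}$-block is $\sum_{\mu_r \in \Phi_{ab}^+} e^{i\tau\mu_r}\tfrac12\begin{psmallmatrix}1&1\\1&1\end{psmallmatrix}$-type term plus the $\Phi_{ab}^-$ term, using Lemma \ref{lem:keylemma}; writing $\zeta^+ = e^{i\tau\mu}$ for $\mu \in \Phi_{ab}^+$ and $\zeta^-$ for $\mu \in \Phi_{ab}^-$, the off-diagonal entry $\beta$ equals $\tfrac12(\zeta^+ - \zeta^-)$. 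For LaFR to be proper we need $\zeta^+ \ne \zeta^-$; since $\tau$ must be an integer multiple of $2\pi/g$ and $\zeta^+ = e^{2\pi i \mu^+ / g \cdot k}$, $\zeta^- = e^{2\pi i \mu^- / g \cdot k}$ for eigenvalues $\mu^\pm$ in the respective classes and suitable $k$, the condition $\zeta^+ \ne \zeta^-$ for some $k$ is exactly that $(\mu^+ - \mu^-)/g \notin \ZZ$ for some choice, i.e. that $\Phi_{ab}^-$ contains an element not divisible by $g$ (after fixing a representative of $\Phi_{ab}^+$ modulo $g$, and noting $0 \in \Phi_{ab}^+$ so differences involving $\Phi_{ab}^+$ representatives and $\Phi_{ab}^-$ elements reduce to the $\Phi_{ab}^-$ elements themselves mod $g$). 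This gives (iii).

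For the converse, assume (i)–(iii). Set $\tau = 2\pi/g$. By (ii) all eigenvalues in $\Phi_{ab}^+ \cup \Phi_{ab}^-$ are nonnegative integers, and by definition of $g$ every difference within a class is a multiple of $g$, so $\tau\mu_r \equiv \tau\mu_s \pmod{2\pi}$ whenever $\mu_r, \mu_s$ lie in the same class; hence $e^{i\tau\mu_r}$ is a well-defined constant $\zeta^+$ on $\Phi_{ab}^+$ and $\zeta^-$ on $\Phi_{ab}^-$. Using Lemma \ref{lem:keylemma} and the strong cospectrality partition, I would compute
\begin{align*}
U(\tau)e_a &= \sum_{\mu_r \in \Phi_{ab}^+} \zeta^+ F_r e_a + \sum_{\mu_r \in \Phi_{ab}^-} \zeta^- F_r e_a + \sum_{\mu_r \in \Phi_{ab}^0} e^{i\tau\mu_r} F_r e_a\\
&= \zeta^+ \tfrac12(e_a + e_b) + \zeta^- \tfrac12(e_a - e_b)\\
&= \tfrac12(\zeta^+ + \zeta^-) e_a + \tfrac12(\zeta^+ - \zeta^-) e_b,
\end{align*}
where the $\Phi_{ab}^0$ sum vanishes since $F_r e_a = 0$ there. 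So $U(\tau)e_a = \alpha e_a + \beta e_b$ with $\beta = \tfrac12(\zeta^+ - \zeta^-)$, which is LaFR from $a$ to $b$; it is proper iff $\zeta^+ \ne \zeta^-$, i.e. iff $2\pi\mu/g \not\equiv 0 \pmod{2\pi}$ for some $\mu \in \Phi_{ab}^-$ relative to a fixed representative of $\Phi_{ab}^+$ — and since $0 \in \Phi_{ab}^+$ we may take the $\Phi_{ab}^+$ representative to contribute $\zeta^+ = 1$, so properness is precisely condition (iii). I expect the main obstacle to be the rationality-to-integrality upgrade in condition (ii): pinning down exactly why the quadratic-integer possibility in Theorem \ref{harmonious} cannot occur requires carefully using that $0 \in \Phi_{ab}^+$ together with positive semidefiniteness of $L$ and the algebraic-conjugation closure, rather than it being a one-line consequence; the rest is bookkeeping with the class constancy of $e^{i\tau\mu_r}$ and the explicit block forms from Lemma \ref{lem:keylemma}.
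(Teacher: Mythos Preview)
Your approach is essentially the paper's: same lemmas in the same order, same use of $0\in\Phi_{ab}^+$ together with conjugation closure and positive semidefiniteness for condition (ii), and the same explicit computation of $U(2\pi/g)e_a$ via Lemma~\ref{lem:keylemma} for the converse.

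There is one genuine soft spot. You apply Theorem~\ref{harmonious} and then speak of a single $\Delta$ governing all of $\Phi_{ab}^+\cup\Phi_{ab}^-$. But Corollary~\ref{ratiocondition} only gives the ratio condition \emph{within} each class, so Theorem~\ref{harmonious} yields a priori two square-free integers $\Delta^+$ and $\Delta^-$. Your argument (using $0\in\Phi_{ab}^+$, conjugation closure, and positive semidefiniteness) correctly forces $\Delta^+=1$, but as written it says nothing about $\Phi_{ab}^-$: none of those three ingredients pins down $\Delta^-$ on its own, since $\Phi_{ab}^-$ need not contain $0$ and may consist of a conjugate pair of positive quadratic integers. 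The paper closes this gap by observing that Lemma~\ref{constant} forces $\tau$ to lie simultaneously in $\tfrac{2\pi}{g^+\sqrt{\Delta^+}}\ZZ$ and in $\tfrac{2\pi}{g^-\sqrt{\Delta^-}}\ZZ$, whence $\sqrt{\Delta^+}/\sqrt{\Delta^-}\in\QQ$ and so $\Delta^+=\Delta^-$. Equivalently, once you know $\Phi_{ab}^+\subseteq\ZZ$ and $|\Phi_{ab}^+|\ge 2$, you get $\tau\in\pi\QQ$, hence the differences within $\Phi_{ab}^-$ are rational, forcing $\Delta^-=1$. Either way, this linking step is needed and is exactly the obstacle you anticipated; the rest of your sketch matches the paper.
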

\begin{proof}
We first prove the if direction. Let $\tau=2\pi/g$. By the definition of $g$, the function $\mu_r\mapsto e^{i\tau \mu_r}$ is constant within $\Phi_{ab}^+$ and within $\Phi_{ab}^-$. Let $\mu_s$ be an element in $\Phi_{ab}^-$ that is not divisible by $g$. Then $e^{i\tau\mu_s}\ne 1$. Now, noticing $0\in \Phi_{ab}^+$ and using Lemma \ref{lem:keylemma},
\begin{align*}
U(\tau) &= e^{i\tau\cdot 0} \left(\sum_{u:\mu_u\in \Phi_{ab}^+}F_u\right)+e^{i\tau\mu_s}\displaystyle\left(\sum_{u:\mu_u\in \Phi_{ab}^-}F_u\right)+\pmat{\bf 0&\bf 0\\ \bf 0 &*} \\
&= \frac12 \pmat{1 &1 & 0 & \hdots & 0 \\1 & 1 & 0 & \hdots & 0 \\ 0 & 0 & * & \hdots & * \\ \vdots & \vdots & \vdots & \ddots & \vdots \\ 0 & 0 & * & \hdots & *} +
\frac{e^{-\ii \tau \mu_s}}{2}\pmat{1 & -1 & 0 & \hdots & 0 \\-1 & 1 & 0 & \hdots & 0 \\ 0 & 0 & * & \hdots & * \\ \vdots & \vdots & \vdots & \ddots & \vdots \\ 0 & 0 & * & \hdots & *}.
\end{align*}
Hence we have proper LaFR at time $\tau$.

For the only if direction, suppose proper LaFR occurs between $a$ and $b$. Condition (i) follows from Theorem \ref{LaFRSC}. Using Corollary \ref{ratiocondition}, Theorem \ref{algclosed} and Theorem \ref{harmonious}, we have integers $a^+$, $a^-$, and square free integers $\Delta^+$, $\Delta^-$, and $\{b_r\}_{r=0}^d$, such that, for all $r = 0,..,d$,
\begin{enumerate}[(1)]
\item if $\mu_r \in \Phi_{ab}^+$, then $\displaystyle \mu_r = \frac{a^+ + b_r \sqrt{\Delta^+}}{2}$, and
\item if $\mu_r \in \Phi_{ab}^-$, then $\displaystyle  \mu_r = \frac{a^- + b_r \sqrt{\Delta^-}}{2}$.
\end{enumerate}
Define 
\[g^+ = \gcd\left\{\frac{\mu_r - \mu_s}{\sqrt{\Delta^+}}: {\mu_r,\mu_s \in \Phi_{ab}^+}\right\}\]
and
\[g^- = \gcd\left\{\frac{\mu_r - \mu_s}{\sqrt{\Delta^-}}: {\mu_r,\mu_s \in \Phi_{ab}^-}\right\}.\]
Then $\tau$ is an integer multiple of $2\pi/g^+\sqrt{\Delta^+}$ and also an integer multiple of $2\pi/g^-\sqrt{\Delta^-}$. Therefore $\Delta^+=\Delta^-$. Let $\Delta=\Delta^+$.

We now prove Condition (ii). By Corollary \ref{pmsetsize}, $\Phi_{ab}^+$ contains at least two elements. Since $0\in \Phi_{a,b}^+$, we must have $a^+=0$. As $\Phi_{ab}^+$ is closed under taking algebraic conjugates, if $\Delta\ne 1$, then for any
\[\frac{0+b_r\sqrt{\Delta}}{2}\in \Phi_{a,b}^+,\]
we have
\[\frac{0-b_r\sqrt{\Delta}}{2}\in \Phi_{a,b}^+.\]
However, this contradicts the fact $L$ is positive semidefinite, as all eigenvalues of $L$ should be non-negative. Therefore $\Delta=1$, and so $\Phi_{a,b}^+\cup \Phi_{a,b}^-\subseteq \ZZ^{\ge 0}$.

Finally, by Lemma \ref{constant}, LaFR can only occur at at times $\frac{2\pi}{g}\ZZ$, where $g$ is defined as in this theorem. If $\mu_r\mapsto e^{i\tau\mu_r}$ is constant on the entire set $\Phi_{ab}^+\cup\Phi_{ab}^-$, then the $X$ is periodic with respect to $L$. Hence, there must be some eigenvalue in $\Phi_{a,b}^-$ that is not divisible by $g$.
\end{proof}

A direct consequence of this result is that the vertices involved in proper LaFR must have degree at least two, unless the graph has fewer than five vertices.

\begin{corollary} \label{LaFRdegbound}
On a connected graph with $n\ge 5$ vertices, if proper LaFR occurs between vertices $a,b$, then $deg(a)=deg(b)\ge 2$.
\end{corollary}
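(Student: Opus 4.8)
The plan is to argue by contradiction, reducing the claim to ruling out the degree-one case and then extracting an impossibility from the degree bounds of Theorem~\ref{scdegbound} together with the integrality furnished by the Characterization Theorem. First I would note that $\deg(a)=\deg(b)$ comes for free: proper LaFR between $a$ and $b$ forces $a,b$ to be Laplacian strongly cospectral (Theorem~\ref{LaFRSC}), and strongly cospectral vertices have equal degree (Lemma~\ref{lem:samedeg}). Since $X$ is connected on $n\ge 5\ge 2$ vertices it has no isolated vertex, so $\deg(a)\ge 1$, and it remains only to exclude $\deg(a)=\deg(b)=1$. Throughout, $\lambda^{\pm}$ will denote the smallest non-zero element of $\Phi_{ab}^{\pm}$; these exist because $0\in\Phi_{ab}^{+}$, $|\Phi_{ab}^{+}|\ge 2$ and $|\Phi_{ab}^{-}|\ge 1$ by Corollary~\ref{pmsetsize}, and they are positive integers because $\Phi_{ab}^{+}\cup\Phi_{ab}^{-}\subseteq\ZZ^{\ge 0}$ by Theorem~\ref{CharThm}(ii). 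In particular $\lambda^{+}\ge 1$ and $\lambda^{-}\ge 1$.

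Next I would dispose of the adjacent case. If $\deg(a)=\deg(b)=1$ and $a\sim b$, then $b$ is the unique neighbour of $a$ and $a$ is the unique neighbour of $b$, so $\{a,b\}$ is a connected component of $X$; this contradicts connectedness since $n>2$. (One could instead note that Theorem~\ref{scdegbound}(ii) would give $\tfrac{n-2}{n}\lambda^{+}+1\le\deg(a)=1$, hence $\lambda^{+}\le 0$, which is already impossible.) So we may assume $a\not\sim b$ and invoke Theorem~\ref{scdegbound}(i) with $\deg(a)=1$: from the lower bound there we obtain
\[
\lambda^{-}\le 1
\qquad\text{and}\qquad
\tfrac{n-2}{n}\,\lambda^{+}\le 1 .
\]

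The finish is then a parity-of-class argument. The first inequality together with $\lambda^{-}\ge 1$ forces $\lambda^{-}=1$, so $1\in\Phi_{ab}^{-}$. The second gives $\lambda^{+}\le \tfrac{n}{n-2}\le \tfrac{5}{3}<2$, using $n\ge 5$ (here $\tfrac{n}{n-2}=1+\tfrac{2}{n-2}$ is decreasing in $n$); since $\lambda^{+}$ is a positive integer this forces $\lambda^{+}=1$, so $1\in\Phi_{ab}^{+}$. But $\Phi_{ab}^{+}$ and $\Phi_{ab}^{-}$ are disjoint by construction, so the eigenvalue $1$ cannot lie in both — a contradiction. Hence $\deg(a)=\deg(b)\ge 2$. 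I do not anticipate a real obstacle; the only points needing care are verifying that $\lambda^{\pm}$ genuinely exist (which is exactly where $n\ge 3$ and the structural lemmas enter) and the arithmetic step $\tfrac{n}{n-2}<2$, which is precisely why the hypothesis $n\ge 5$ is invoked — for $n=4$ one has $\tfrac{n}{n-2}=2$ and the argument no longer closes.
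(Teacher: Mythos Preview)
Your proof is correct, but it takes a different path from the paper's. Both arguments rest on the lower bound $\frac{n-2}{n}\lambda^{+}\le\deg(a)$ from Theorem~\ref{scdegbound}, but they diverge in what is pulled from the Characterization Theorem. The paper observes that proper LaFR forces $g\ge 2$ (otherwise every element of $\Phi_{ab}^{-}$ is divisible by $g$, violating condition~(iii)); since $0\in\Phi_{ab}^{+}$, the difference $\lambda^{+}-0=\lambda^{+}$ is divisible by $g$, so $\lambda^{+}\ge 2$ directly, and the bound gives $\deg(a)\ge\frac{2(n-2)}{n}>1$ for $n\ge 5$. You instead argue by contradiction using only condition~(ii) (integrality): assuming $\deg(a)=1$ forces both $\lambda^{+}=1$ and $\lambda^{-}=1$, which is impossible because $\Phi_{ab}^{+}$ and $\Phi_{ab}^{-}$ are disjoint. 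The paper's route is shorter and never touches $\lambda^{-}$ or the disjointness of the two classes; yours is slightly longer but has the virtue of avoiding the quantity $g$ and condition~(iii) altogether.
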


\begin{proof}
Since $a$ and $b$ are Laplacian strongly cospectral, by Theorem \ref{scdegbound}, we have
\[\frac{n-2}{n}\lambda^+ + \sigma \le d,\]
where $\lambda^+$ is the smallest element in $\Phi_{ab}^+$, and $\sigma=1$ if $a$ is adjacent to $b$ and $\sigma=0$ otherwise. As the LaFR is proper, $g\ne 1$, and so $\lambda\ge 2$. The result then follows when $n\ge 5$.
\end{proof}

\section{Laplacian periodicity}
\label{Section:Period}
In this section, we characterize Laplacian periodicity at a vertex $a$ of a graph. This can be viewed as non-proper LaFR; however, it is also a necessary condition for proper LaFR to occur from $a$ to another vertex.

As before, let $X$ be a graph with Laplacian matrix $L$, and suppose the spectral decomposition is
\[L = \sum_r \mu_r F_r.\]
The \textsl{eigenvalue support} of a vertex $a$, denoted $\Phi_a$, is the set
\[\Phi_a = \{\mu_r: F_r e_a\ne 0 \}.\]
Since $F_r$ is positive semidefinite, equivalently,
\[\Phi_a = \{\mu_r: (F_r)_{aa}\ne 0.\}\]
We will let $\Phi_a^0$ denote the complement of $\Phi_a$.

Define two polynomials
\[\psi(t)=\det(tI-L)\]
and
\[\psi_a(t)=\det((tI-L)[a|a]).\]
We can compute $\Phi_a$ using $\psi$ and $\psi_a$. By Cramer's Rule and the spectral decomposition of $L$, 
\[\frac{\psi_a(t)}{\psi(t)}=(tI-L)^{-1}_{aa}=\sum_r \frac{1}{t-\mu_r} F_r.\]
This leads to the following observation.

\begin{lemma}\label{lem:poles}
The elements in $\Phi_a$ are precisely the poles of $\psi_a(t)/\psi(t)$.
\end{lemma}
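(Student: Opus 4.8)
The plan is to read off the eigenvalue support $\Phi_a$ from the partial fraction expansion of the rational function $\psi_a(t)/\psi(t)$, whose numerator and denominator are the two characteristic polynomials already defined. First I would recall that Cramer's Rule applied to the matrix $tI - L$ gives
\[
(tI-L)^{-1}_{aa} = \frac{\det\big((tI-L)[a|a]\big)}{\det(tI-L)} = \frac{\psi_a(t)}{\psi(t)},
\]
while the spectral decomposition $L = \sum_r \mu_r F_r$ yields
\[
(tI-L)^{-1}_{aa} = \sum_r \frac{(F_r)_{aa}}{t - \mu_r}.
\]
Equating these two expressions identifies $\psi_a(t)/\psi(t)$ with a sum of simple partial fractions, one for each \emph{distinct} eigenvalue $\mu_r$, with residue exactly $(F_r)_{aa}$ at the pole $t = \mu_r$.

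The key step is then to observe that this partial fraction form is unique: a term $\tfrac{(F_r)_{aa}}{t-\mu_r}$ genuinely contributes a pole at $\mu_r$ precisely when its numerator $(F_r)_{aa}$ is nonzero, since the distinct $\mu_r$ are pairwise different and no cancellation between terms is possible. By the characterization of $\Phi_a$ recorded just before the lemma, $(F_r)_{aa} \ne 0$ is equivalent to $\mu_r \in \Phi_a$ (here I use that each $F_r$ is positive semidefinite, so $(F_r)_{aa} = 0$ forces $F_r e_a = 0$). Hence the set of poles of $\psi_a(t)/\psi(t)$ is exactly $\{\mu_r : (F_r)_{aa} \ne 0\} = \Phi_a$, which is the claim.

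I would also note two small bookkeeping points to make the argument airtight. First, one should check that the putative pole at $\mu_r$ is not artificially cancelled by a matching zero of the numerator $\psi_a$ when we clear denominators to write $\psi(t)$ in factored form; this is handled by the uniqueness of the partial fraction decomposition, i.e.\ the residue at $t = \mu_r$ of the left-hand side is forced to equal $(F_r)_{aa}$, independently of how $\psi_a$ and $\psi$ look individually. Second, $\psi(t) = \prod_r (t - \mu_r)^{m_r}$ a priori has higher-order poles, but the right-hand side shows every pole of $\psi_a/\psi$ is in fact simple, so $\psi_a$ must carry the extra factors $(t-\mu_r)^{m_r - 1}$; this is consistent and requires no separate verification for the statement about which values are poles.

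\textbf{Main obstacle.} There is no serious obstacle here: the lemma is essentially a restatement of the partial fraction identity for the resolvent entry $(tI-L)^{-1}_{aa}$, and the only point needing a word of care is the non-cancellation (uniqueness of partial fractions), which is immediate since the $\mu_r$ are distinct. The proof should be only a few lines.
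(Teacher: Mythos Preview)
Your proposal is correct and follows exactly the paper's approach: the paper derives the same identity $\psi_a(t)/\psi(t) = (tI-L)^{-1}_{aa} = \sum_r (F_r)_{aa}/(t-\mu_r)$ via Cramer's rule and the spectral decomposition, and then states the lemma as an immediate observation. You have simply made explicit the (routine) non-cancellation argument that the paper leaves to the reader.
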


We also note an interesting connection between $\Phi_a$ and the spanning trees of $X$. Suppose $X$ has $n$ vertices and $\deg(a)=d$. Let $\mu_r$ be an eigenvalue of $L$ with eigenvector $x$. If $x_a=0$, then we may put the Laplacian matrix in the form
\[L=\pmat{
L[a\vert a]& * \\
* & d },\]
where the last row is indexed by $a$. Then the restriction of $x$ to $X\setminus a$, denoted $\hat{x}$, is an eigenvector of $L[a|a]$ with eigenvalue $\mu_r$. Hence, the restriction of vectors in the $\mu_r$-eigenspace to $X\backslash a$, if non-zero, is an eigenvector for $L[a|a]$ with eigenvalue $\mu_r$.

On the other hand, if we let $W$ denote the subspace $\RR^{n-1}\times \{0\}$, then $\mu\in \Phi_a^0$ if and only if the $\mu_r$-eigenspace is a subspace of $W$, and so
\begin{align*}
    \dim (\col(F_r)\cap W)&=\dim (\col(F_r))+\dim W-\dim (\col(F_r)+W)\\
    &=\begin{cases}
    \dim (\col(F_r)),& \text{if }   \mu\in\Phi_a^0
    \\\dim(\col(F_r))-1,& \text{if } \mu\in\Phi_a
    \end{cases}
\end{align*}
Therefore, if $\mu_r\in\Phi_a^0$, then $\mu_r$ is an eigenvalue of $L[a|a]$ of same or higher multiplicity, and if $\mu_r\in \Phi_a$, then $\mu_r$ is an eigenvalue of $L[a|a]$ with multiplicity at least one less. By the Matrix-Tree theorem, we arrive at the following.

\begin{theorem}\label{lem:sptrees}
Let $X$ be a graph and let $a$ be a vertex. The product of elements in $\Phi_a^0$ divides the number of spanning trees of $X$. 
\end{theorem}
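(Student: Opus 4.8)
The plan is to feed the eigenvalue‑interlacing observation recorded just above the theorem into the Matrix‑Tree Theorem, with one short integrality step in between. Write $\psi_a(t)=\det((tI-L)[a|a])=\det(tI-L[a|a])$ for the characteristic polynomial of the principal submatrix $L[a|a]$; it is monic of degree $n-1$ with integer coefficients, and by Theorem~\ref{mattree} one has $\psi_a(0)=\det(-L[a|a])=(-1)^{n-1}q$, where $q$ is the number of spanning trees of $X$.

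First I would observe that each $\mu\in\Phi_a^0$ is a root of $\psi_a$: if $x$ is a $\mu$-eigenvector of $L$ then $x_a=\langle e_a,x\rangle=0$ because $F_\mu e_a=0$, so the restriction of $x$ to $V(X)\setminus a$ is a nonzero eigenvector of $L[a|a]$ with eigenvalue $\mu$ --- this is exactly the computation displayed just before the theorem. I would also note $0\notin\Phi_a^0$: since $\one$ is a $0$-eigenvector with $\one_a=1\neq0$, we get $(F_0)_{aa}\neq0$; and since $L$ is positive semidefinite, every element of $\Phi_a^0$ is a positive real. As the elements of $\Phi_a^0$ are distinct roots of $\psi_a$, the polynomial $p(t):=\prod_{\mu\in\Phi_a^0}(t-\mu)$ divides $\psi_a(t)$ in $\QQ[t]$.

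Next I would check $p(t)\in\ZZ[t]$. The set $\Phi_a^0$ is closed under algebraic conjugation: $\mu\in\Phi_a^0$ iff $F_\mu e_a=0$ iff $e_a\in\col(L-\mu I)$ iff $\rank(L-\mu I)=\rank[\,L-\mu I\mid e_a\,]$, and this last condition concerns only ranks of matrices over $\QQ(\mu)$, which are preserved by any field automorphism fixing $\QQ$ --- the same mechanism as in Theorem~\ref{algclosed}. Hence $p$ is a product of minimal polynomials of algebraic integers (eigenvalues of the integer matrix $L$), so it is monic with integer coefficients. Then $\psi_a$ is monic in $\ZZ[t]$ and $p\mid\psi_a$ in $\QQ[t]$, so dividing monic integer polynomials gives $\psi_a=p\cdot h$ with $h\in\ZZ[t]$. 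Evaluating at $t=0$: $(-1)^{n-1}q=\psi_a(0)=p(0)h(0)=(-1)^{|\Phi_a^0|}\Bigl(\prod_{\mu\in\Phi_a^0}\mu\Bigr)h(0)$ with $h(0)\in\ZZ$, so the positive integer $\prod_{\mu\in\Phi_a^0}\mu$ divides $q$.

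I do not expect a genuine obstacle here; the only points needing care are that $0\notin\Phi_a^0$ (so the product divides $q$ itself, not merely $nq$) and the integrality of $p$, which is the single place where the short Galois argument is required. The disconnected case, where $q=0$, is then covered automatically since everything divides $0$.
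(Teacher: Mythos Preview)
Your proof is correct and follows the same route the paper sketches in the discussion just above the theorem: each $\mu\in\Phi_a^0$ is a root of $\psi_a$, and then the Matrix-Tree identity $\psi_a(0)=\pm q$ finishes. Your explicit Galois-closure step showing $p(t)\in\ZZ[t]$ is a useful addition that the paper's ``by the Matrix-Tree theorem, we arrive at the following'' leaves implicit, and it is exactly what upgrades ``$\prod_{\mu\in\Phi_a^0}\mu$ is a real factor of $q$'' to ``it is an integer divisor of $q$''.
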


Theorem \ref{CharThm} gives a characterization of proper LaFR between $a$ and $b$ using $\Phi_{ab}^+$ and $\Phi_{ab}^-$. We now use $\Phi_a$ to characterize Laplacian periodicity. The proof is very similar, so we omit it here.

\begin{theorem}{(Characterization Theorem For Laplacian Periodicity)}\label{LaPerCharThm}
A graph is Laplacian periodic at a vertex $a$ if and only if  $\Phi_a$ contains only integers. Moreover, if Laplacian periodicity occurs at time $\tau$, then $\tau$ is an integer multiple of $2\pi/G$, where 
\[G=\gcd\Phi_a.\]
\end{theorem}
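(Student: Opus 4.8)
The plan is to mirror the proof of the Characterization Theorem for proper LaFR (Theorem~\ref{CharThm}), specialized to the single-vertex setting, where the role of $\Phi_{ab}^+\cup\Phi_{ab}^-$ is played by $\Phi_a$ and all the ``$\pm$'' bookkeeping collapses. Recall $U(\tau)=\sum_r e^{i\tau\mu_r}F_r$, and $X$ is Laplacian periodic at $a$ at time $\tau$ exactly when $U(\tau)e_a=\alpha e_a$ for some (necessarily unimodular) scalar $\alpha$. Multiplying by $F_r$ gives $e^{i\tau\mu_r}F_re_a=\alpha F_re_a$, so for every $\mu_r\in\Phi_a$ we must have $e^{i\tau\mu_r}=\alpha$; conversely if $e^{i\tau\mu_r}$ takes a common value $\alpha$ on all of $\Phi_a$, then $U(\tau)e_a=\alpha\sum_{r:\mu_r\in\Phi_a}F_re_a=\alpha e_a$. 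So periodicity at $a$ at time $\tau$ is \emph{equivalent} to the statement that $\mu_r\mapsto e^{i\tau\mu_r}$ is constant on $\Phi_a$.

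From this equivalence the ``if'' direction is immediate: if $\Phi_a\subseteq\ZZ$, set $G=\gcd\Phi_a$ and $\tau=2\pi/G$; then each $\mu_r\in\Phi_a$ is an integer multiple of $G$, so $e^{i\tau\mu_r}=1$ for all of them, giving periodicity (indeed with $\alpha=1$). For the ``only if'' direction, suppose $X$ is periodic at $a$ at some time $\tau$. Then $e^{i\tau\mu_r}$ is constant on $\Phi_a$, so $\tau(\mu_r-\mu_s)\in2\pi\ZZ$ for all $\mu_r,\mu_s\in\Phi_a$, hence all ratios $(\mu_i-\mu_j)/(\mu_r-\mu_s)$ (with $\mu_r\ne\mu_s$) are rational. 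The eigenvalues are real algebraic integers (entries of $L$ are integers), and $\Phi_a$ is closed under algebraic conjugation by the same Galois-automorphism argument as in Theorem~\ref{algclosed} (applied to $\Phi_a$ rather than the $\pm$-classes: an automorphism sending $\mu$ to $\mu'$ sends a $\mu$-eigenvector $v$ with $v_a\ne0$ to a $\mu'$-eigenvector $v'$ with $v'_a=\Psi(v_a)\ne0$). So Theorem~\ref{harmonious} applies to $\Phi_a$: its elements have the form $(a_0+b_r\sqrt{\Delta})/2$ for a fixed integer $a_0$ and square-free $\Delta$. Now invoke positive semidefiniteness of $L$ exactly as in the proof of Theorem~\ref{CharThm}: since $0\in\Phi_a$ whenever $a$ has a nonzero entry in the all-ones eigenvector (which it does, $\one$ being nowhere zero), we get $a_0=0$; and if $\Delta\ne1$, then $\Phi_a$ containing $b_r\sqrt\Delta/2>0$ forces it to contain $-b_r\sqrt\Delta/2<0$, contradicting $L\succeq0$. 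Hence $\Delta=1$ and $\Phi_a\subseteq\ZZ$ (in fact $\subseteq\ZZ^{\ge0}$).

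Finally, for the time bound: once $\Phi_a\subseteq\ZZ$ and periodicity holds at $\tau$ with common value $\alpha=e^{i\tau\mu_0}$, we have $\tau(\mu_r-\mu_s)\in2\pi\ZZ$ for all pairs; taking $\mu_s=0$ shows $\tau\mu_r\in2\pi\ZZ$ for every $\mu_r\in\Phi_a$, so $\tau$ is a common period and $\tau\cdot\gcd\Phi_a=\tau G\in2\pi\ZZ$, i.e.\ $\tau\in(2\pi/G)\ZZ$. I expect no genuine obstacle here—the whole argument is a streamlined copy of Theorem~\ref{CharThm} with the bichromatic structure removed—so the only care needed is the routine verifications that the quoted tools (algebraic conjugation closure, Godsil's harmonious-set theorem, positive semidefiniteness) transfer verbatim to the set $\Phi_a$; this is exactly why the authors say ``The proof is very similar, so we omit it here.''
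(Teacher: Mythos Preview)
Your proposal is correct and follows exactly the approach the paper intends: the authors explicitly omit the proof, saying it is ``very similar'' to that of Theorem~\ref{CharThm}, and you have carried out precisely that adaptation, collapsing the $\Phi_{ab}^{\pm}$ structure to the single set $\Phi_a$ and using $0\in\Phi_a$ together with positive semidefiniteness to force $\Delta=1$. The only cosmetic remark is that closure of $\Phi_a$ under algebraic conjugation can be read off even more directly from Lemma~\ref{lem:poles} (the poles of the rational function $\psi_a/\psi$ over $\QQ$), but your eigenvector argument is equally valid.
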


Clearly, if $a$ is strongly cospectral to $b$, then 
\[\Phi_a = \Phi_{ab}^+ \cup \Phi_{ab}^-,\]
and 
\[\Phi_a^0 = \Phi_{ab}^0.\]
Thus we have the following corollary.

\begin{corollary}\label{cor:FRtoper}
If $X$ admits proper LaFR between $a$ and $b$, then it is periodic at both $a$ and $b$.
\end{corollary}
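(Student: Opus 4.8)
The plan is to combine the Characterization Theorem for proper LaFR (Theorem \ref{CharThm}) with the Characterization Theorem for Laplacian periodicity (Theorem \ref{LaPerCharThm}), using the observation that precedes this corollary, namely that strong cospectrality of $a$ and $b$ gives $\Phi_a = \Phi_{ab}^+ \cup \Phi_{ab}^-$ and similarly $\Phi_b = \Phi_{ab}^+ \cup \Phi_{ab}^-$.

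Concretely, suppose $X$ admits proper LaFR between $a$ and $b$. By Theorem \ref{CharThm}, condition (i) tells us $a$ and $b$ are Laplacian strongly cospectral, and condition (ii) tells us $\Phi_{ab}^+ \cup \Phi_{ab}^- \subseteq \ZZ^{\ge 0}$. Since $a$ and $b$ are strongly cospectral, the displayed identity just above the corollary gives $\Phi_a = \Phi_{ab}^+ \cup \Phi_{ab}^-$, so $\Phi_a$ consists only of integers. By the Characterization Theorem for Laplacian periodicity (Theorem \ref{LaPerCharThm}), $X$ is Laplacian periodic at $a$. The identical argument applied to $b$ — using that strong cospectrality is symmetric in $a$ and $b$, so $\Phi_b = \Phi_{ba}^+ \cup \Phi_{ba}^- = \Phi_{ab}^+ \cup \Phi_{ab}^-$ also consists only of integers — shows $X$ is Laplacian periodic at $b$ as well.

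There is essentially no obstacle here; the corollary is a near-immediate bookkeeping consequence of the two characterization theorems once one notes the set-theoretic identity for $\Phi_a$ under strong cospectrality. The only point worth a sentence of care is confirming that $0 \in \Phi_a$ does not cause trouble: since $0$ is an integer, its presence is harmless for the periodicity criterion, and in fact one can remark that periodicity occurs at any integer multiple of $2\pi/\gcd\Phi_a$, which refines but is not needed for the statement. I would keep the proof to two or three lines, simply invoking Theorem \ref{CharThm}(i)--(ii), the identity $\Phi_a = \Phi_{ab}^+\cup\Phi_{ab}^-$, and Theorem \ref{LaPerCharThm}.
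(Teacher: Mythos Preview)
Your proposal is correct and follows exactly the paper's approach: the paper simply observes that strong cospectrality gives $\Phi_a = \Phi_{ab}^+ \cup \Phi_{ab}^-$ and then states the corollary as an immediate consequence of Theorems \ref{CharThm} and \ref{LaPerCharThm}, which is precisely what you do.
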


In \cite{g12b}, Godsil showed that for any integer $k$, there are only finitely many connected graphs with maximum valency that admit adjacency perfect state transfer. We extend his result to Laplacian periodicity. 

Given a graph $X$ and a vertex $a$, the \textsl{eccentricity} of $a$, denoted $\ecc(a)$, is the maximum distance from $a$ to any other vertex in $X$. We show that the size of $\Phi_a$ determines an upper bound for $\ecc(a)$.

\begin{lemma}\label{eccentricity}
Let $\Phi_a$ be the eigenvalue support of $a$ with respect to the Laplacian matrix. Then $\abs{\Phi_a}\ge \ecc(a)+1$.
\end{lemma}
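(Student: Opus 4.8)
The plan is to relate the eigenvalue support $\Phi_a$ to the powers of $L$ via the spectral decomposition, and then use the standard fact that $(L^k)_{au}\ne 0$ forces $d(a,u)\le k$ while $d(a,u)=k$ forces $(L^k)_{au}\ne 0$ (more precisely $(L^k)_{au}$ has a definite sign when $d(a,u)=k$, since the only walks of length $k$ from $a$ to $u$ are shortest paths, each contributing $(-1)^k$ from the off-diagonal $-1$ entries of $L$). Let $k=\abs{\Phi_a}$ and suppose for contradiction that $\ecc(a)\ge k+1$, so there is a vertex $u$ with $d(a,u)=k+1$; I will derive a contradiction by showing $(L^j)_{au}=0$ for $j=0,1,\dots,k$ yet also forcing it for one more power, which is impossible since $(L^{k+1})_{au}\ne 0$.

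The key computation: write $L=\sum_r \mu_r F_r$, so $L^j e_a = \sum_{r:\mu_r\in\Phi_a} \mu_r^j F_r e_a$. The vectors $\{F_r e_a : \mu_r\in\Phi_a\}$ are linearly independent (they lie in pairwise orthogonal eigenspaces and are all nonzero by definition of $\Phi_a$), and there are exactly $k$ of them. Therefore the $k+1$ vectors $e_a, Le_a, L^2 e_a,\dots,L^k e_a$ all lie in the $k$-dimensional space $W_a=\spn\{F_r e_a:\mu_r\in\Phi_a\}$, hence are linearly dependent; in fact $L^k e_a$ is a $\QQ$- (indeed real) linear combination of $e_a, Le_a,\dots,L^{k-1}e_a$, since the minimal polynomial of $L$ restricted to the cyclic module generated by $e_a$ has degree exactly $k$ (its roots are precisely $\Phi_a$, by Lemma \ref{lem:poles} / the partial fraction expansion). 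Consequently $L^m e_a \in \spn\{e_a, Le_a,\dots,L^{k-1}e_a\}$ for every $m\ge 0$.

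Now evaluate the $u$-coordinate. For $j\le k-1 < d(a,u)$ we have $(L^j e_a)_u = (L^j)_{au} = 0$, because any entry $(L^j)_{au}$ is a signed count of walks of length $j$ from $a$ to $u$ in the graph (the diagonal degree terms only contribute closed-walk steps), and there are no such walks when $j < d(a,u)$. Hence every vector in $\spn\{e_a,Le_a,\dots,L^{k-1}e_a\}$ has zero $u$-coordinate, and therefore so does $L^{k+1}e_a$, i.e. $(L^{k+1})_{au}=0$. But $d(a,u)=k+1$, so every walk of length $k+1$ from $a$ to $u$ is a shortest path, and each such path contributes $(-1)^{k+1}$ (one factor of $-1$ per edge, no degree-term detours possible); summing over the at-least-one shortest path gives $(L^{k+1})_{au}=(-1)^{k+1}\cdot(\#\text{shortest }a\text{--}u\text{ paths})\ne 0$. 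This contradiction shows $\ecc(a)\le k-1$... wait — to match the claimed bound $\abs{\Phi_a}\ge \ecc(a)+1$ I should take $u$ with $d(a,u)=\ecc(a)$ and run the argument to conclude $\ecc(a)\le k-1$ is false only if $\ecc(a)\le k-1$; rather, the clean statement is: if $\ecc(a)\ge k$ then picking $u$ at distance exactly $k$ gives $(L^k)_{au}\ne 0$ while $L^k e_a\in\spn\{e_a,\dots,L^{k-1}e_a\}$ forces $(L^k)_{au}=0$, a contradiction, so $\ecc(a)\le k-1$, i.e. $\abs{\Phi_a}=k\ge \ecc(a)+1$.

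The main obstacle is making rigorous the two sign/walk-counting facts about powers of the Laplacian — namely that $(L^j)_{au}=0$ for $j<d(a,u)$ and $(L^{d(a,u)})_{au}\ne 0$ — while being careful that the diagonal degree entries of $L$ could in principle cancel contributions; the resolution is that a walk of length exactly $d(a,u)$ from $a$ to $u$ cannot use any ``stay-in-place'' degree step (that would make it too short to reach $u$), so it is a genuine shortest path contributing a uniform sign $(-1)^{d(a,u)}$, and no cancellation occurs. Everything else (linear independence of the $F_r e_a$, that the cyclic minimal polynomial of $e_a$ has degree $\abs{\Phi_a}$) is immediate from the spectral decomposition and Lemma \ref{lem:poles}.
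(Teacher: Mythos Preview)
Your proof is correct and rests on the same core idea as the paper's: the vectors $e_a, Le_a,\dots,L^{\ecc(a)}e_a$ are linearly independent (via a walk argument) and all lie in the $\abs{\Phi_a}$-dimensional space $\spn\{F_r e_a:\mu_r\in\Phi_a\}$. The execution differs in one useful technical detail: rather than working with $L$ and tracking the sign $(-1)^{d(a,u)}$ of shortest-path contributions to show $(L^{d(a,u)})_{au}\ne 0$, the paper passes to $M=dI-L$ (where $d$ is the maximum degree), which has nonnegative entries; then the support of $M^j e_a$ strictly grows with $j$ up to $j=\ecc(a)$, giving linear independence without any sign bookkeeping. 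Your contradiction framing via the minimal polynomial is also unnecessary---once you know $(L^j)_{au}$ first becomes nonzero exactly when $j=d(a,u)$, the linear independence of $e_a,\dots,L^{\ecc(a)}e_a$ is immediate, and the dimension count finishes directly.
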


\begin{proof}
Let $\ell=\ecc(a)$. Let $d$ be the maximum valency of the graph, and define
\[M=dI-L.\]
Then $M$ is a weighted adjacency matrix of the graph with non-negative entries. Thus, for any $i<j$, the support of $M^i e_a$ is a proper subset of the support of $M^je_a$, and so the vectors
\[e_a, Me_a, M^2e_a,\cdots,M^{\ell} e_a\]
are linearly independent. On the other hand, $M$ is a linear combination of the spectral idempotents $F_r$ of $L$, so
\[\mathrm{span}\{e_a, Me_a, M^2e_a,\cdots,M^{\ell} e_a\}\subseteq\mathrm{span}\{F_r e_a: r\in\Phi_a\}.\]
Thus, $1+\ell\le \abs{\Phi_a}$.
\end{proof}

Using the second upper bound in Theorem \ref{Levbd}, we are able to prove the following theorem. It explains why Laplacian periodicity, and hence LaFR, is a rare phenomenon.

\begin{theorem}
Given an integer $k$, there are only finitely many connected graphs with maximum valency at most $k$ that are Laplacian periodic at a vertex.
\end{theorem}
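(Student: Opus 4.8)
The plan is to bound the number of vertices of such a graph purely in terms of $k$, by combining three ingredients already established: the Anderson--Morley eigenvalue bound (Theorem \ref{Levbd}), the characterization of Laplacian periodicity (Theorem \ref{LaPerCharThm}), and the eccentricity estimate (Lemma \ref{eccentricity}). Since there are only finitely many graphs on a bounded number of vertices, this finishes the proof.

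First I would fix a connected graph $X$ with maximum valency at most $k$ that is Laplacian periodic at some vertex $a$. Because $X$ is connected, Theorem \ref{Levbd} applies and every Laplacian eigenvalue $\mu$ of $X$ satisfies $\mu \le \max\{\deg(u)+\deg(v) : u\sim v\} \le 2k$. On the other hand, since $X$ is Laplacian periodic at $a$, Theorem \ref{LaPerCharThm} forces every element of the eigenvalue support $\Phi_a$ to be an integer; and $\Phi_a \subseteq [0,2k]$ because $L$ is positive semidefinite. Hence $\Phi_a \subseteq \{0,1,\ldots,2k\}$, so $\abs{\Phi_a}\le 2k+1$. Here it is important to apply the periodicity characterization to the \emph{entire} support $\Phi_a$, not to a proper subset.

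Next I would invoke Lemma \ref{eccentricity}, which gives $\ecc(a)+1 \le \abs{\Phi_a}\le 2k+1$, that is, $\ecc(a)\le 2k$. A routine counting argument then bounds $\abs{V(X)}$: the number of vertices at distance exactly $i$ from $a$ is at most $k(k-1)^{i-1}$ for $i\ge 1$ (and $1$ for $i=0$), since $a$ has at most $k$ neighbours and each vertex other than $a$ has at most $k-1$ neighbours at the next larger distance from $a$. Summing over $0\le i\le \ecc(a)\le 2k$,
\[
\abs{V(X)} \;\le\; 1 + \sum_{i=1}^{2k} k(k-1)^{i-1} \;=:\; N(k),
\]
a finite quantity depending only on $k$. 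Since there are only finitely many isomorphism classes of graphs on at most $N(k)$ vertices, only finitely many of them can be connected, have maximum valency at most $k$, and be Laplacian periodic at a vertex.

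I do not expect a genuine obstacle: the argument is essentially an assembly of the three cited results, and the counting step is standard. The only points that require a little care are checking the hypothesis of Theorem \ref{Levbd} (connectedness, which is assumed) and, as noted above, applying Theorem \ref{LaPerCharThm} to all of $\Phi_a$. If an explicit bound is desired one can simplify $N(k)$, but the summation form above already suffices and is valid for all $k\ge 1$.
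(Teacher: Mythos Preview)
Your argument is correct and follows essentially the same route as the paper: bound the Laplacian eigenvalues by $2k$ via Theorem \ref{Levbd}, use Theorem \ref{LaPerCharThm} to force $\Phi_a\subseteq\{0,\ldots,2k\}$, then apply Lemma \ref{eccentricity} to get $\ecc(a)\le 2k$. The only difference is that you spell out the Moore-type vertex count explicitly, whereas the paper simply asserts that bounded eccentricity and bounded valency yield finitely many graphs.
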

\begin{proof}
Let $X$ be a graph with maximum valency $k$. Suppose $X$ is Laplacian periodic at a vertex $a$. Then $\Phi_a$ contains only integers. By Theorem \ref{Levbd}, the eigenvalues of $L$ are no greater than $2k$. Hence, 
\[2k+1\ge \abs{\Phi_a} \ge \ecc(a)+1,\]
from which we have $\ecc(a)\le 2k$, and only finitely many graphs satisfy this constraint.
\end{proof}

\section{Laplacian fractional revival is polynomial time}
Using the theory we developed so far, we show that LaFR can be decided in polynomial time.

\begin{theorem}\label{thm:polytime}
Deciding whether a graph has proper LaFR, and the earliest time when it occurs, can be done in polynomial time.
\end{theorem}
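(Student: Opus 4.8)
The plan is to convert the Characterization Theorem (Theorem~\ref{CharThm}) into an algorithm that runs over all ordered pairs of vertices $(a,b)$, decides for each pair whether proper LaFR occurs between $a$ and $b$, and, when it does, outputs the time $2\pi/g$ produced by that theorem; the graph then admits proper LaFR if and only if some pair succeeds, and the earliest time overall is the minimum of the resulting values $2\pi/g$ (by the ``moreover'' clause of Theorem~\ref{CharThm}, every LaFR time for a fixed pair is a positive integer multiple of $2\pi/g$, and the ``if'' direction realizes $\tau=2\pi/g$). The first ingredient is that all the needed algebraic data can be computed exactly and efficiently. The polynomials $\psi(t)=\det(tI-L)$, $\psi_a(t)=\det((tI-L)[a|a])$, and the numerators $\psi_{aj}(t)$ with $(tI-L)^{-1}_{aj}=\psi_{aj}(t)/\psi(t)$ are integer polynomials of degree at most $n$ whose coefficients have $O(n\log n)$ bits, and they can be obtained by exact elimination over $\QQ$ (or a Faddeev--LeVerrier scheme) in polynomial time. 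By Lemma~\ref{lem:poles}, the eigenvalue support $\Phi_a$ is the set of poles of $\psi_a/\psi$, i.e.\ the set of roots of the monic integer polynomial $\psi/\gcd(\psi,\psi_a)$, which we extract by polynomial arithmetic over $\QQ$.

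The only genuine obstacle is that the Laplacian spectrum may be irrational, so we cannot simply compute eigenvalues numerically and test algebraic identities approximately. Theorem~\ref{CharThm} removes this difficulty: proper LaFR between $a$ and $b$ forces $\Phi_{ab}^+\cup\Phi_{ab}^-=\Phi_a\subseteq\ZZ^{\ge 0}$ (condition (ii); equivalently $X$ must be Laplacian periodic at $a$, Theorem~\ref{LaPerCharThm}), and by Theorem~\ref{Levbd} every Laplacian eigenvalue is at most $n$. Hence the integer eigenvalues are exactly those $k\in\{0,1,\dots,n\}$ with $\psi(k)=0$, and we can test in polynomial time whether all of $\Phi_a$ consists of nonnegative integers by checking whether $\psi/\gcd(\psi,\psi_a)$ splits completely into linear factors $t-k$ with $k\in\{0,\dots,n\}$. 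If this test fails we discard the pair; if it succeeds, every $\mu\in\Phi_a$ is a known nonnegative integer, and we can even compute the relevant spectral projections exactly: the pole of $(tI-L)^{-1}$ at $\mu$ is simple, so writing $\psi(t)=(t-\mu)^m\wt\psi(t)$ with $\wt\psi(\mu)\ne 0$ and $\psi_{aj}(t)=(t-\mu)^{m-1}\wt{\psi_{aj}}(t)$, the vector $F_\mu e_a$ has $j$-th entry $\wt{\psi_{aj}}(\mu)/\wt\psi(\mu)\in\QQ$.

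With this in place, strong cospectrality and the sign partition become an exact finite computation. For a pair $(a,b)$ we first check $\psi_a=\psi_b$ (necessary, since Laplacian strong cospectrality forces $(F_r)_{aa}=(F_r)_{bb}$ for all $r$, hence $\psi_a=\psi_b$ and $\Phi_a=\Phi_b$). Then, for each integer $\mu\in\Phi_a$, we compare the exact rational vectors $F_\mu e_a$ and $F_\mu e_b$: by Theorem~\ref{LaFRSC}, proper LaFR requires $F_\mu e_a=\pm F_\mu e_b$ for every such $\mu$, and the sign tells us whether $\mu\in\Phi_{ab}^+$ or $\mu\in\Phi_{ab}^-$ (for $\mu_r\notin\Phi_a$ both vectors vanish, using $\Phi_a=\Phi_b$). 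Finally, with $\Phi_{ab}^+$ and $\Phi_{ab}^-$ now explicit sets of nonnegative integers, we compute $g=\gcd$ of all differences $\mu_r-\mu_s$ taken within $\Phi_{ab}^+$ or within $\Phi_{ab}^-$, and check condition (iii): whether some element of $\Phi_{ab}^-$ is not divisible by $g$. By Theorem~\ref{CharThm}, this last check holds precisely when proper LaFR occurs between $a$ and $b$, and then the earliest such time is $2\pi/g$. Looping over the $O(n^2)$ pairs, each step being polynomial in $n$ and in the coefficient bit-size, and taking the minimum of $2\pi/g$ over the successful pairs, gives a polynomial-time decision procedure that also outputs the earliest time. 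The crux, to reiterate, is bridging exact algebraic conditions to efficient computation, and it is handled by Theorem~\ref{CharThm} together with the bound of Theorem~\ref{Levbd}, which confine all spectral information relevant to proper LaFR to integers in $\{0,\dots,n\}$.
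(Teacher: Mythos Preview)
Your proof is correct and follows essentially the same route as the paper: reduce to checking the three conditions of Theorem~\ref{CharThm} for each vertex pair, use Lemma~\ref{lem:poles} and the bound of Theorem~\ref{Levbd} to confine $\Phi_a$ to integers in $\{0,\dots,n\}$, compute the sign partition exactly, and then read off $g$. The only notable difference is in how the spectral data are extracted: the paper cites \cite{PolyTime} for strong cospectrality and invokes Gaussian elimination on $L-\mu I$ to get eigenvectors, whereas you compute $F_\mu e_a$ directly from the resolvent via the cofactors $\psi_{aj}$ and the simple-pole residue formula; both are valid polynomial-time routes, and your version has the minor advantage of being self-contained.
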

\begin{proof}
We prove this by showing that all three conditions in Theorem \ref{CharThm} can be checked in polynomial time. 

By lemma 2.4 in \cite{PolyTime}, Laplacian strong cospectrality, which is Condition (i), can be checked in polynomial time. 

Now we adapt lemma 2.5 in \cite{PolyTime} to proper LaFR. By Lemma \ref{lem:poles}, the elements in $\Phi_a$ are precisely the poles of $\psi_a(t)/\psi(t)$, which are all simple. Equivalently, $\Phi_a$ consists of simple roots of
\begin{align*}
    f(x) = \frac{\psi(x)}{gcd(\psi(x), \psi_a(x))}
\end{align*}
To find integer eigenvalues in $\Phi_a$, recall that all eigenvalues of $L$ lie in $[0,n]$, so we simply check whether $0,1,\cdots, n$ are roots of $f(x)$, which can be done in polynomial time. Moreover, if $f(x)$ has degree $k$, then the coefficient of $(-x)^{k-1}$ is the sum of the roots of $f(x)$. By comparing the sum of the roots we found with the coefficient of $(-x)^{k-1}$, we can decide whether all eigenvalues in $\Phi_a$ are integers. 

We then use Gaussian elimination to calculate the corresponding eigenvectors in polynomial time. This means we can decide in polynomial time how to partition $\Phi_a$ into $\Phi_{ab}^+$ and $\Phi_{ab}^-$. Therefore, Condition (ii) can be checked in polynomial time.

Finally, let
\[g = \gcd \left\{\mu_r-\mu_s:\mu_r, \mu_s \in \Phi_{ab}^+ \text{ or } \mu_r, \mu_s\in\Phi_{ab}^- \right\}.\]
Since there are at most $n$ eigenvalues in $\Phi_{ab}^+\cup\Phi_{ab}^-$, the set over which we take the gcd has size at most $\binom{n}{2} = O(n^2)$. Moreover, as all elements in the set live in $[0,n]$, we can compute $g$ in polynomial time. It remains to check no element in $\Phi_{ab}^-$, which has size less than $n$, is divisible by $g$. Hence, Condition (iii) can be checked in polynomial time. 
\end{proof}

\section{No proper Laplacian fractional revival on trees} \label{sec:trees}
In this section, we show that proper LaFR does not occur on trees except for $K_2$ and $P_3$. An earlier result about LaPST on trees, due to Coutinho and Liu \cite{LaPST}, can be viewed as a consequence.

We first prove a technical lemma on the signed incidence matrices of trees.

\begin{lemma}\label{lem:By}
Let $T$ be a tree on $n$ vertices. Fix an orientation of $T$, and let $B$ be the signed incidence matrix. If $\mu\ge 2$ is an integer, then any solution to
\[By\equiv 0\pmod{\mu}\]
must satisfy 
\[y\equiv 0\pmod{\mu}.\]
\end{lemma}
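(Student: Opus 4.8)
The plan is to exploit the tree structure of $T$ via a peeling (leaf-removal) argument. Fix the orientation and let $B$ be the signed incidence matrix; recall $B$ has one row per vertex and one column per edge, with a single $+1$ and a single $-1$ in each column. Suppose $y$ is a vector indexed by the edges of $T$ with $By \equiv 0 \pmod{\mu}$. Pick a leaf $v$ of $T$, and let $e$ be the unique edge incident to $v$. The row of $B$ indexed by $v$ has exactly one nonzero entry, namely $\pm 1$ in the column corresponding to $e$. Hence the $v$-coordinate of $By$ is $\pm y_e$, and $By \equiv 0 \pmod \mu$ forces $y_e \equiv 0 \pmod \mu$.

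Next I would induct. Having shown $y_e \equiv 0 \pmod \mu$ for the pendant edge $e$ at the leaf $v$, delete $v$ (and $e$) from $T$ to obtain a smaller tree $T'$, whose signed incidence matrix $B'$ is obtained from $B$ by deleting the row $v$ and the column $e$. Let $y'$ be the restriction of $y$ to the edges of $T'$. For every vertex $w \neq v$, the $w$-coordinate of $By$ equals the $w$-coordinate of $B'y'$ plus (possibly) the contribution of the column $e$, which is $\pm y_e$ if $w$ is the other endpoint of $e$ and $0$ otherwise. Since $y_e \equiv 0 \pmod \mu$, we get $B'y' \equiv 0 \pmod \mu$. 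By the inductive hypothesis applied to $T'$ (a tree on $n-1$ vertices, with the base case $n=1$ vacuous, as $T'$ then has no edges), $y' \equiv 0 \pmod \mu$. Combined with $y_e \equiv 0 \pmod \mu$, this gives $y \equiv 0 \pmod \mu$, completing the induction.

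There is essentially no hard step here: the argument is a clean induction whose engine is the observation that a leaf contributes a single $\pm 1$ to its row of $B$. The only points requiring a little care are bookkeeping — making sure the row/column deletion really produces the signed incidence matrix of $T'$ (true because deleting a leaf removes exactly one vertex and one edge, and affects no other incidences), and checking that the hypothesis $\mu \geq 2$ is what lets us conclude $y_e \equiv 0$ from $\pm y_e \equiv 0 \pmod \mu$ (for $\mu = 1$ the statement is vacuous anyway, but the intended regime is $\mu \geq 2$). One could alternatively phrase this without explicit induction by ordering the edges so that each is pendant in the subtree spanned by the later edges, but the induction on $n$ is the cleanest writeup.
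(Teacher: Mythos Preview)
Your proof is correct and follows essentially the same leaf-peeling induction as the paper: pick a leaf $v$ with incident edge $e$, read off $y_e\equiv 0$ from the $v$-row of $B$, delete $v$ and $e$, and induct on the smaller tree. The only cosmetic differences are that the paper starts the induction at $K_2$ rather than the vacuous one-vertex case, and your remark about needing $\mu\ge 2$ to pass from $\pm y_e\equiv 0$ to $y_e\equiv 0$ is unnecessary (that implication holds for any modulus, since $\pm1$ is always a unit); the hypothesis $\mu\ge 2$ is there only so the conclusion is nontrivial.
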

\begin{proof}
If $T=K_2$, then 
\[B=\pmat{1\\-1},\]
and from $By\equiv 0\pmod{\mu}$ it follows that both entries of $y$ are divisible by $\mu$.

Now let $T$ be a general tree on $n\ge 3$ vertices. Let $v$ be a leaf of the tree, and $e$ the edge incidence to $v$. Then $By\equiv 0\pmod{\mu}$ implies that $y_e$ is divisible by $\mu$. Thus, if $y'$ denotes the restriction of $y$ to $T\backslash v$, then
\[B[v|e] y'\equiv 0\pmod{\mu}.\]
Note that $B[v|e]$ is the signed incidence matrix of some orientation of $T\backslash v$. By induction, we see that $y\equiv 0\pmod{\mu}$.
\end{proof}

As $L=BB^T$, the above lemma imposes number theoretic conditions on the eigenvectors of $L$ associated with integer eigenvalues.

\begin{corollary}\label{cor:diffdiv}
Let $T$ be a tree with Laplacian matrix $L$. Let $\mu$ be an integer eigenvalue of $L$ with eigenvector $x$. Suppose the entries of $x$ are integers. Then the difference of any two entries of $x$ is divisible by $\mu$.
\end{corollary}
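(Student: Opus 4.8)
The plan is to combine Lemma~\ref{lem:By} with the factorization $L = BB^T$ to pass from a statement about eigenvectors of $L$ to a statement about solutions of $By \equiv 0$. Let $x$ be an integer eigenvector of $L$ with integer eigenvalue $\mu$. If $\mu = 0$ then $x$ is a scalar multiple of $\one$, and the difference of any two entries is $0$, which is divisible by $0$; so assume $\mu \ge 1$. If $\mu = 1$ there is nothing to prove, so assume $\mu \ge 2$, which is exactly the range where Lemma~\ref{lem:By} applies.

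First I would write $Lx = \mu x$ as $BB^T x = \mu x$ and set $y = B^T x$, which is an integer vector indexed by the edges. Then $B y = BB^T x = \mu x \equiv 0 \pmod{\mu}$, so by Lemma~\ref{lem:By} we conclude $y = B^T x \equiv 0 \pmod{\mu}$. The vector $y = B^T x$ has, for each edge $e = \{u,v\}$ oriented from $v$ to $u$, the entry $y_e = x_u - x_v$. Hence $x_u - x_v \equiv 0 \pmod{\mu}$ for every edge $\{u,v\}$ of $T$.

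It remains to upgrade this from adjacent pairs to arbitrary pairs of entries. Since $T$ is connected, any two vertices $u$ and $w$ are joined by a path $u = v_0 \sim v_1 \sim \cdots \sim v_k = w$, and then
\[
x_u - x_w = \sum_{i=1}^{k} (x_{v_{i-1}} - x_{v_i})
\]
is a sum of terms each divisible by $\mu$, hence divisible by $\mu$. This completes the argument.

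The only place any care is needed is the sign bookkeeping in identifying the entries of $B^T x$ with differences $x_u - x_v$: depending on the chosen orientation of an edge $e$ the entry is $\pm(x_u - x_v)$, but divisibility by $\mu$ is insensitive to the sign, so the conclusion is unaffected. No genuine obstacle arises; the substance of the corollary is entirely carried by Lemma~\ref{lem:By}, and the present statement is essentially its translation through $L = BB^T$ together with the trivial $\mu \in \{0,1\}$ cases and a telescoping sum along paths in the tree.
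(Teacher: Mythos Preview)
Your proof is correct and follows essentially the same route as the paper's: set $y = B^T x$, apply Lemma~\ref{lem:By} to conclude $y \equiv 0 \pmod{\mu}$, interpret the entries of $y$ as differences along edges, and use connectedness to extend to arbitrary pairs. You are in fact slightly more careful than the paper in explicitly disposing of the trivial cases $\mu \in \{0,1\}$ (Lemma~\ref{lem:By} assumes $\mu \ge 2$) and in spelling out the telescoping sum along a path.
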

\begin{proof}
Let $B$ be the signed incidence matrix of any orientation of $T$. Then $L=BB^T$. Define a new vector $y=B^Tx$. The condition $Lx=\mu x$ is equivalent to $By=\mu x$, and so
\[By\equiv 0{\pmod \mu}.\]
By Lemma \ref{lem:By}, all entries of $y$ are divisible by $\mu$. 

On the other hand, for every edge $e=(u,v)$ of $X$, we have $y_e=x_u-x_v$. Thus $x_u-x_v$ is divisible by $\mu$. As $T$ is connected, the difference of any two entries of $x$ are divisible by $\mu$.
\end{proof}

We now prove the main non-existence result about proper LaFR on trees.
\begin{theorem}\label{thm:notree}
The only trees the admit proper LaFR are $K_2$ and $P_3$.
\end{theorem}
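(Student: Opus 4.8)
The plan is to assume, toward a contradiction, that a tree $T$ on $n \ge 4$ vertices admits proper LaFR between vertices $a$ and $b$, and to derive a contradiction from the Characterization Theorem together with the number-theoretic constraints established for trees. By Theorem \ref{CharThm}, $a$ and $b$ are Laplacian strongly cospectral, $\Phi_{ab}^+ \cup \Phi_{ab}^- \subseteq \ZZ^{\ge 0}$, and some element of $\Phi_{ab}^-$ is not divisible by $g = \gcd\{\mu_r - \mu_s : \mu_r,\mu_s \in \Phi_{ab}^+ \text{ or } \mu_r,\mu_s \in \Phi_{ab}^-\}$. In particular every eigenvalue in the support of $a$ is an integer. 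The key leverage is Corollary \ref{cor:diffdiv}: for an integer eigenvalue $\mu$ of $L$ with integer eigenvector $x$, every difference $x_u - x_v$ is divisible by $\mu$.

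First I would set up the main arithmetic contradiction. Since $T$ is a tree, its only spanning tree is itself, so the number of spanning trees is $q = 1$. By Lemma \ref{lem:minussetfactor}, no odd prime can divide any element of $\Phi_{ab}^-$; hence every element of $\Phi_{ab}^-$ is a power of $2$. Next I would pin down $|\Phi_{ab}^-|$: by Theorem \ref{thm:scdist}, if $|\Phi_{ab}^-| = k$ then $d(a,b) \le 2k$, and by Corollary \ref{scdist}, $|\Phi_{ab}^-| = 1$ forces $a$ and $b$ to be twins — but in a tree the only twins are the two vertices of $K_2$, which is excluded since $n \ge 4$. So $|\Phi_{ab}^-| \ge 2$, meaning $\Phi_{ab}^-$ contains at least two distinct powers of $2$, say $2^i < 2^j$. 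Then $g$ divides $2^j - 2^i = 2^i(2^{j-i}-1)$; combined with $g \mid 2^i$ and $g \mid 2^j$ this forces $g$ to be a power of $2$ as well, and in fact $g \mid 2^i$ where $2^i$ is the smallest element of $\Phi_{ab}^-$. But condition (iii) requires some element of $\Phi_{ab}^-$ not divisible by $g$ — and every element of $\Phi_{ab}^-$, being a power of $2$ that is $\ge 2^i$, \emph{is} divisible by $2^i \ge g$. This already looks like a contradiction; I would need to be careful that $g$ could be a proper divisor of $2^i$, in which case the argument must instead use the positive side.

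To handle that subtlety, I would bring in $\Phi_{ab}^+$ and Corollary \ref{cor:diffdiv} more forcefully. Note $0 \in \Phi_{ab}^+$, and by Corollary \ref{pmsetsize} there is a nonzero $\mu^+ \in \Phi_{ab}^+$; apply Corollary \ref{cor:diffdiv} to the eigenvector of $\mu^+$ to see that all its entries are congruent mod $\mu^+$, which (since the eigenvector is orthogonal to $\one$ and nonzero) forces a divisibility relation tying $\mu^+$ to $n$ and to the structure of $T$ — effectively all nonzero differences of eigenvector entries vanish mod $\mu^+$, severely restricting how large $\mu^+$ can be relative to the degree bounds in Theorem \ref{scdegbound}. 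Meanwhile Theorem \ref{Levbd} caps the eigenvalues by $\max\{\deg(u)+\deg(v) : u\sim v\}$, and Corollary \ref{LaFRdegbound} forces $\deg(a)=\deg(b)\ge 2$ when $n \ge 5$. Squeezing these bounds against $\Phi_{ab}^- \subseteq \{2^i\}$ and the gcd condition should eliminate all cases; the small cases $n = 4$ can be checked directly (and for $n \ge 3$, $P_3$ survives precisely because it is not a tree on $\ge 4$ vertices). The main obstacle, I expect, is the bookkeeping in the $g$-versus-$2^i$ step: making the gcd argument airtight requires simultaneously using that \emph{both} $\Phi_{ab}^+$ and $\Phi_{ab}^-$ feed into $g$, so that if $g$ were a proper power-of-$2$ divisor of $\min \Phi_{ab}^-$, the positive-side differences (via Corollary \ref{cor:diffdiv} and the integrality of $\Phi_{ab}^+$) would have to supply an odd factor or a larger $2$-power — contradicting that every relevant quantity is forced to be a power of $2$ dividing something small. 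Once that is locked down, condition (iii) of Theorem \ref{CharThm} fails and no tree on $n \ge 4$ vertices can admit proper LaFR.
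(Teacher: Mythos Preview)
Your proposal has two gaps, one minor and one substantive.

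The minor one: the assertion that ``in a tree the only twins are the two vertices of $K_2$'' is false. Any two leaves attached to a common vertex are twins (for example the endpoints of $P_3$, or any pair of leaves in a star). The correct handling of the case $|\Phi_{ab}^-|=1$ is that twins in a tree must be leaves (a second common neighbor would create a cycle), hence have degree $1$; Corollary~\ref{LaFRdegbound} then rules out $n\ge 5$, and $n=4$ is checked directly.

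The substantive gap is in the case $|\Phi_{ab}^-|\ge 2$. Lemma~\ref{lem:minussetfactor} only shows that each $\mu\in\Phi_{ab}^-$ is a power of $2$, and this is not strong enough to force condition (iii) of Theorem~\ref{CharThm} to fail. Consider for instance $\Phi_{ab}^-=\{2,8\}$: the only difference is $6$, so if every nonzero element of $\Phi_{ab}^+$ were divisible by $3$ you would get $g\in\{3,6\}$, and then $2\in\Phi_{ab}^-$ is \emph{not} divisible by $g$ --- condition (iii) holds and there is no contradiction. Your proposed ``positive side'' repair is too vague to close this: applying Corollary~\ref{cor:diffdiv} to some $\mu^+\in\Phi_{ab}^+$ tells you only that all eigenvector entries are congruent modulo $\mu^+$, which yields $\mu^+\mid n$ but gives no control over odd prime factors of $g$. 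The paper avoids this detour by applying Corollary~\ref{cor:diffdiv} \emph{directly} to each $\mu\in\Phi_{ab}^-$: take an integer eigenvector $x$ with entries of $\gcd$ $1$; all entries are congruent modulo $\mu$, so $\gcd(\mu,x_a)=1$ (otherwise that common factor would divide every entry). But strong cospectrality gives $x_a=-x_b$, hence $\mu\mid 2x_a$, and therefore $\mu\mid 2$. This forces $\Phi_{ab}^-\subseteq\{1,2\}$; with $|\Phi_{ab}^-|\ge 2$ we get $\Phi_{ab}^-=\{1,2\}$, so $g\mid 1$ and condition (iii) fails immediately. The missing idea in your argument is precisely the use of $x_a=-x_b$ to sharpen ``power of $2$'' to ``divisor of $2$''.
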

\begin{proof}
Let $T$ be a tree with proper LaFR between $a$ and $b$. If $\abs{\Phi_{ab}^-}=1$, then $a$ and $b$ are twin vertices by Corollary \ref{scdist}. As $T$ is a tree, there can only be one common neighbor of $a$ and $b$, and so they are leaves. Thus by Corollary \ref{LaFRdegbound}, $T$ has at most four vertices. It is easy to check that only $K_2$ and $P_3$ admit LaFR.

Now suppose $\abs{\Phi_{ab}^-}\ge 2$. Let $\mu\in \Phi_{ab}^-$ be an eigenvalue of $L$ with eigenvector $x$. Since $\mu$ is an integer, we may assume without loss of generality that $x$ has integer entries, and that the gcd of these entries is $1$. By Corollary \ref{cor:diffdiv},
\[x_u\equiv x_v \pmod{\mu}.\]
However, as $\mu\in \Phi_{ab}^-$, we also have $x_u=-x_v$. Hence $\mu$ divides $2x_u$. Moreover, if $\gcd(\mu, x_u)$ were not $1$, then it would appear as a common factor of all entries of $x$, which contradicts our assumption. Therefore $\mu$ divides $2$. Since $\abs{\Phi_{ab}^-}\ge 2$, we must have $\Phi_{ab}^-=\{1,2\}$. By Theorem \ref{CharThm}, LaFR occurs at time $2\pi/g$ with $g=1$, and this cannot be proper.
\end{proof}

\section{Laplacian fractional revival on joins}
\label{Section:Joins}
We derive more results about proper LaFR on join graphs, in addition to those in Section \ref{sec:compjoin}. In particular, we determine when and where LaFR can occur on a join graph.

\begin{theorem}\label{malenaconjecturejoin}
Let $Z$ be a join graph on $n\ge 3$ vertices. Suppose $Z$ admits LaFR at time $\tau$. Then $\tau$ is an integer multiple of $2\pi/n$.
\end{theorem}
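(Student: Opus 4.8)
The plan is to exploit the special structure of the Laplacian spectrum of a join. Recall that if $Z = X_1 + X_2$ with $|V(X_1)| = n_1$ and $|V(X_2)| = n_2$, then $n = n_1 + n_2$ is automatically a Laplacian eigenvalue of $Z$ (this follows from Theorem \ref{Levbd}, since $\comp{Z} = \comp{X_1} \cup \comp{X_2}$ is disconnected), and moreover the non-zero eigenvalues of $Z$ are: $n_1 + \mu$ for each nonzero eigenvalue $\mu$ of $L(X_2)$, $n_2 + \nu$ for each nonzero eigenvalue $\nu$ of $L(X_1)$, and the eigenvalue $n$ itself (with appropriate multiplicity). In particular $\one$ spans the $0$-eigenspace and the all-ones-type vector orthogonal to it inside the span of $\mathbf{1}_{V(X_1)}, \mathbf{1}_{V(X_2)}$ is an eigenvector for $n$.

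First I would reduce to the case of proper LaFR: if the LaFR at time $\tau$ is in fact periodicity (i.e. $\beta = 0$) at a vertex $a$, then by Theorem \ref{LaPerCharThm} we need $\tau$ to be an integer multiple of $2\pi/\gcd\Phi_a$, and I would argue that $n \in \Phi_a$ for the relevant vertex (or handle periodicity directly), so $\gcd \Phi_a \mid n$ and the claim follows. For proper LaFR between $a$ and $b$, Theorem \ref{CharThm} tells us $\tau$ is an integer multiple of $2\pi/g$ where $g = \gcd\{\mu_r - \mu_s : \mu_r,\mu_s \in \Phi_{ab}^+ \text{ or } \Phi_{ab}^-\}$ and that $\Phi_{ab}^+ \cup \Phi_{ab}^- \subseteq \ZZ^{\ge 0}$. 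So it suffices to show $g \mid n$, i.e. that $n$ is a multiple of $g$. Since $0 \in \Phi_{ab}^+$, it is enough to show $n \in \Phi_{ab}^+$ (then $n - 0 = n$ is among the differences, forcing $g \mid n$).

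The key step, then, is to show that the eigenvalue $n$ lies in $\Phi_{ab}^+$ whenever $a$ and $b$ are strongly cospectral vertices of a join $Z$ on $n$ vertices admitting proper LaFR. I would argue as follows: by Lemma \ref{lem:keylemma} and Lemma \ref{lem:samedeg}, $a$ and $b$ have the same degree, so they lie in the same part, say $V(X_1)$ (a vertex in $X_1$ has degree $\deg_{X_1} + n_2$ while one in $X_2$ has $\deg_{X_2} + n_1$; if they were in different parts, strong cospectrality plus the structure of the $n$-eigenspace and the $0$-eigenspace would force $n_1 = n_2$ and one can still push through, but same-part is the main case). The $n$-eigenspace of $L(Z)$ contains the vector $v = n_2 \mathbf{1}_{V(X_1)} - n_1 \mathbf{1}_{V(X_2)}$; its restriction to $V(X_1)$ is constant and non-zero, so $F_n e_a = F_n e_b \ne 0$, placing $n$ in $\Phi_{ab}^+$. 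The one subtlety is that the $n$-eigenspace may be larger than $\operatorname{span}\{v\}$ (when $\comp{X_1}$ or $\comp{X_2}$ is itself disconnected), but any additional eigenvectors for $n$ come from eigenvectors of $L(X_i)$ with eigenvalue $0$, i.e. indicator vectors of components; I would check that $F_n e_a$ still has equal $a$- and $b$-entries with a consistent sign, using strong cospectrality to pin down the sign as $+$ (since $F_n e_a$ has a nonzero contribution from $v$ which is constant on $V(X_1)$).

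The main obstacle I anticipate is the bookkeeping around the multiplicity of $n$ and the precise description of its eigenspace when the complements of the join factors are disconnected — that is, making sure $F_n e_a = +F_n e_b$ rather than merely $F_n e_a = \pm F_n e_b$, and ruling out $F_n e_a = -F_n e_b = 0$ possibilities. This is handled by noting $(F_n)_{aa} = (F_n)_{bb}$ always holds for strongly cospectral vertices, and that the component of $e_a$ along the specific eigenvector $v$ (which is constant and equal on $a$ and $b$ since both lie in $V(X_1)$) is nonzero, so $F_n e_a \ne 0$ and the sign must be $+$. Once $n \in \Phi_{ab}^+$ is established, $g \mid n$ is immediate and hence $\tau \in \frac{2\pi}{g}\ZZ \subseteq \frac{2\pi}{n}\ZZ$, completing the proof.
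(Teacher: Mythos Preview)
Your approach via the characterization theorems is genuinely different from the paper's. The paper argues directly from the complement identity
\[
\exp(i\tau\comp{L}) \;=\; \frac{1-e^{in\tau}}{n}\,J \;+\; e^{in\tau}\exp(-i\tau L):
\]
since $Z$ is a join, $\comp{Z}$ is disconnected and the left side is block diagonal by components; the LaFR hypothesis makes the $(a,c)$-entry of $\exp(-i\tau L)$ vanish for every $c\notin\{a,b\}$, and choosing such a $c$ in a different component of $\comp{Z}$ from $a$ (or from $b$) forces $1-e^{in\tau}=0$. This is short and uses nothing beyond the computation behind Theorem~\ref{thm:complement}. Your route is more structural and essentially proves Corollary~\ref{cor:ninplus} along the way, which is a nice byproduct.

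There is, however, a genuine gap. Equal degree does not force $a$ and $b$ into the same part of the join, as you acknowledge, and your fallback ``$n_1=n_2$ and one can still push through'' does not work as stated: when $a\in V(X_1)$ and $b\in V(X_2)$, testing strong cospectrality against $v=n_2\one_{V(X_1)}-n_1\one_{V(X_2)}$ gives $v_a=n_2$ and $v_b=-n_1$, so you obtain $n\in\Phi_{ab}^-$, \emph{not} $\Phi_{ab}^+$, and then nothing in your argument yields $g\mid n$. The correct resolution is that this cross-part case cannot occur at all for $n\ge 3$: once $n\in\Phi_{ab}^-$, testing against the indicator of the component of $\comp{Z}$ containing $a$ forces that component to have exactly $n/2$ vertices, hence to coincide with $V(X_1)$ (and likewise $V(X_2)$ is a single component); then for every eigenvalue $\nu\notin\{0,n\}$ the $\nu$-eigenspace of $L(Z)$ decomposes into a piece supported on $V(X_1)$ and a piece supported on $V(X_2)$, so $F_\nu e_a$ and $F_\nu e_b$ have disjoint supports and strong cospectrality forces both to vanish. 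That leaves $\Phi_{ab}^+=\{0\}$, contradicting Corollary~\ref{pmsetsize}. You would need to supply this argument, or something equivalent, to close the proof.
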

\begin{proof}
Let $L$ be the Laplacian matrix of $Z$, and $\comp{L}$ the Laplacian matrix of $\comp{Z}$. Recall from the proof of Theorem \ref{thm:complement} that
\begin{align*}
\exp(i\tau\comp{L})
&=e^{in\tau}\left((e^{-in\tau}-1)\frac{1}{n}J+I\right)\exp(-i\tau L)\\
&=\frac{1-e^{in\tau}}{n}J + e^{in\tau} \exp\left(-i\tau L\right)
\end{align*}
Assume, for a contradiction, that $\tau$ is not an integer multiple of $2\pi/n$. Then in the above formula, the coefficient before $J$ is non-zero. Since $\comp{X}$ has at least three vertices, this can only happen if $\comp{X}$ is connected. However, that contradicts the fact that $X$ is a join graph.
\end{proof}

We saw in Theorem \ref{thm:complement} that, under a mild conditon, LaFR is preserved by taking the complement of the graph. As a direct consequence of Theorem \ref{malenaconjecturejoin}, we can drop this condition when the graph is a join.

\begin{corollary}\label{cor:joincomplement}
Let $Z$ be a connected join graph. Then $Z$ admits LaFR between $a$ and $b$ if and only if $\comp{Z}$ admits LaFR between $a$ and $b$ at the same time.
\end{corollary}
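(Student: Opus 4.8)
The plan is to combine Theorem~\ref{thm:complement} (which gives one direction under the hypothesis $n\tau \in 2\pi\ZZ$) with Theorem~\ref{malenaconjecturejoin} (which removes that hypothesis for join graphs). The key observation is that a connected join graph $Z$ has a disconnected complement $\comp Z$, and conversely $\comp{\comp Z} = Z$ is a join, so the roles of $Z$ and $\comp Z$ are symmetric as soon as we know both are ``join-like'' in the relevant sense.

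First I would suppose $Z$ admits LaFR between $a$ and $b$ at time $\tau$. Since $Z$ is a connected join graph on $n$ vertices, Theorem~\ref{malenaconjecturejoin} applies and gives $\tau \in \tfrac{2\pi}{n}\ZZ$, i.e.\ $n\tau \in 2\pi\ZZ$. Now Theorem~\ref{thm:complement} applies directly: with this divisibility condition satisfied, $\comp Z$ admits LaFR (of the same type, LaPST or proper) between $a$ and $b$ at the same time $\tau$. This proves the ``only if'' direction.

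For the converse, I would note that $\comp Z$ being the complement of a connected join graph is automatic to handle, but the cleanest route is: the identity $\exp(i\tau \comp L) = e^{in\tau}\bigl((e^{-in\tau}-1)\tfrac1n J + I\bigr)\exp(-i\tau L)$ from the proof of Theorem~\ref{thm:complement} is symmetric in $L \leftrightarrow \comp L$ once $n\tau \in 2\pi\ZZ$, since then $\exp(i\tau\comp L) = \exp(-i\tau L)$ and equivalently $\exp(i\tau L)=\exp(-i\tau\comp L)$. So if $\comp Z$ admits LaFR between $a$ and $b$ at some time $\tau$, I still need $n\tau \in 2\pi\ZZ$ to transfer it back. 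This is exactly where I would invoke Theorem~\ref{malenaconjecturejoin} again—but applied this time noticing that LaFR on $\comp Z$ forces, via the same argument, that $\tau$ is an integer multiple of $2\pi/n$: the coefficient $\tfrac{1-e^{in\tau}}{n}$ in front of $J$ in the expansion of $\exp(i\tau L) = \exp(i\tau(nI - J - \comp L))$ must vanish, because $Z = \comp{\comp Z}$ is connected (it is a join, hence connected by hypothesis, and in any case the complement of the disconnected graph $\comp Z$) and LaFR forces a block-diagonal transition matrix that cannot contain a nonzero multiple of $J$ on a connected graph with $n \ge 3$ vertices. Hence $n\tau \in 2\pi\ZZ$, and Theorem~\ref{thm:complement} (applied to $\comp Z$, whose complement is $Z$) returns LaFR on $Z$ between $a$ and $b$ at time $\tau$.

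The main obstacle is the converse direction: Theorem~\ref{thm:complement} as stated takes a graph $X$ to $\comp X$, and Theorem~\ref{malenaconjecturejoin} as stated is about join graphs, so to run the argument starting from $\comp Z$ I must re-derive the divisibility $n\tau \in 2\pi\ZZ$ purely from LaFR on $\comp Z$ plus connectivity of $Z$. The point is that the proof of Theorem~\ref{malenaconjecturejoin} really only used that the \emph{complement} of the ambient graph is disconnected; here the ambient graph is $\comp Z$ and its complement $Z$ is connected, so that exact argument does not apply verbatim. Instead I would argue from the other side: expand $\exp(i\tau L)$ in terms of $\comp L$ and $J$, observe that LaFR on $\comp Z$ makes $\exp(i\tau\comp L)$ block-diagonal hence with no $J$-component outside its natural place, and conclude the $J$-coefficient must be zero, forcing $e^{in\tau}=1$. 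Once that is in hand, both directions are immediate from Theorem~\ref{thm:complement}, and the equality of the times is built in.
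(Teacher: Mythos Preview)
Your forward direction is correct and is exactly what the paper's one-line justification intends: Theorem~\ref{malenaconjecturejoin} gives $n\tau\in 2\pi\ZZ$, and then Theorem~\ref{thm:complement} transfers LaFR from $Z$ to $\comp Z$.

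Your converse argument, however, has a genuine gap. The claim that ``LaFR forces a block-diagonal transition matrix that cannot contain a nonzero multiple of $J$ on a connected graph with $n\ge 3$ vertices'' is unjustified and in fact false. When you swap the roles of $Z$ and $\comp Z$ in the proof of Theorem~\ref{malenaconjecturejoin}, both available hypotheses---that $\comp Z$ is disconnected and that $\comp Z$ has LaFR---constrain the \emph{same} matrix $\exp(i\tau\comp L)$; the identity you write down then expresses $\exp(i\tau L)$ in terms of it, but you have no structural information about $\exp(i\tau L)$ beyond unitarity, so nothing forces the $J$-coefficient to vanish. (Already $Z=K_3$ shows a connected graph whose transition matrix equals $\gamma J+e^{in\tau}I$ with $\gamma\ne 0$ at generic $\tau$.)

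Worse, the converse is actually false as stated. Take $Z=\comp{P_3}+K_1$, a connected join on four vertices with $\comp Z=P_3\sqcup K_1$. Between the two end-vertices $a,b$ of the $P_3$ one has $\Phi_{ab}^+(\comp Z)=\{0,3\}$, $\Phi_{ab}^-(\comp Z)=\{1\}$, so $\comp Z$ admits proper LaFR at $\tau=2\pi/3$. But in $Z$ one computes $\Phi_{ab}^+(Z)=\{0,1,4\}$ and $\Phi_{ab}^-(Z)=\{3\}$, whence $g(Z)=1$ and Theorem~\ref{CharThm} rules out proper LaFR on $Z$ between $a$ and $b$ at any time; a direct check confirms $U_Z(2\pi/3)e_a$ is supported on all four vertices. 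The culprit is that $n\in\Phi_{ab}^+(Z)$ (Corollary~\ref{cor:ninplus}) need not be divisible by $g(\comp Z)$. So only the forward implication is valid---and that is the only direction the paper ever uses.
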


The following result determines where LaFR can occur on a join graph. 

\begin{corollary}\label{samepartition}
Let $Z=X+Y$ be a join graph on at least three vertices. Suppose proper LaFR occurs between $a$ and $b$ at time $\tau$. Then $a$ and $b$ are either both in $X$ or both in $Y$. Moreover, the component of $\comp{Z}$ containing $a$ admits proper LaFR between $a$ and $b$ at time $\tau$. 
\end{corollary}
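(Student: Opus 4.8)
The plan is to combine the ratio/time constraint from Theorem~\ref{malenaconjecturejoin} with the structure of the Laplacian spectrum of a join. Recall that if $Z = X+Y$ with $|V(X)| = n_1$, $|V(Y)| = n_2$, and $n = n_1 + n_2$, then the Laplacian eigenvalues of $Z$ are: $0$; $n$ (with multiplicity one more than for the disjoint union, coming from the all-ones vectors on each side); the eigenvalues $\mu + n_2$ for each nonzero Laplacian eigenvalue $\mu$ of $X$; and the eigenvalues $\nu + n_1$ for each nonzero Laplacian eigenvalue $\nu$ of $Y$. Crucially, the eigenvectors supported entirely on $V(X)$ (orthogonal to $\one_X$) have eigenvalue $\mu + n_2$, and those supported on $V(Y)$ have eigenvalue $\nu + n_1$, while $\one$ and the vector $n_2 \one_X - n_1 \one_Y$ (eigenvalue $n$) are the only ones charging both sides.

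First I would show $a$ and $b$ lie in the same part. Suppose for contradiction $a \in V(X)$ and $b \in V(Y)$. By Theorem~\ref{LaFRSC}, $a$ and $b$ are Laplacian strongly cospectral, so $\Phi_a = \Phi_b = \Phi_{ab}^+ \cup \Phi_{ab}^-$, and by Corollary~\ref{pmsetsize} we have $|\Phi_{ab}^-| \ge 1$, i.e.\ $\Phi_{ab}^- \ne \emptyset$. Now examine which eigenvalues can appear in $\Phi_a$: since the eigenvectors with eigenvalue $\nu + n_1$ are supported on $V(Y)$, they vanish at $a \in V(X)$, so no such eigenvalue lies in $\Phi_a$; symmetrically no eigenvalue $\mu + n_2$ lies in $\Phi_b$. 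Hence $\Phi_a \subseteq \{0, n\} \cup \{\mu + n_2\}$ and $\Phi_b \subseteq \{0, n\} \cup \{\nu + n_1\}$, and strong cospectrality forces $\Phi_a = \Phi_b \subseteq \{0, n\}$. But then $|\Phi_{ab}^+ \cup \Phi_{ab}^-| \le 2$, and since $0 \in \Phi_{ab}^+$ and $n$ would have to be the unique element of $\Phi_{ab}^-$, we get $|\Phi_{ab}^-| = 1$. By Corollary~\ref{scdist}, $a$ and $b$ are twins; but a vertex in $X$ and a vertex in $Y$ of a join are adjacent and have different neighbourhoods (unless $X$ or $Y$ is a single vertex, the boundary case one must dispose of separately using $n \ge 3$), so this is impossible. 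Actually the cleanest route may be: with $\Phi_{ab}^+ = \{0\}$ and $\Phi_{ab}^- = \{n\}$, Lemma~\ref{lem:keylemma} would force $F_0 e_a = F_0 e_b$, i.e.\ the $a$ and $b$ entries of $\tfrac1n\one$ agree (they do), but also $e_a = \tfrac1n\one + F_n e_a$ and $e_b = \tfrac1n\one - F_n e_b$ with $F_n e_a = -F_n e_b$, and computing $(F_n)_{aa}$ from the known eigenvector $n_2\one_X - n_1\one_Y$ shows $(F_n)_{aa} \ne (F_n)_{bb}$ unless $n_1 = n_2$, and even then one gets a contradiction with $\sum_r (F_r)_{aa} = 1$. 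I expect the twin argument to be shorter.

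For the second statement: by Theorem~\ref{malenaconjecturejoin}, $\tau$ is an integer multiple of $2\pi/n$, so $n\tau \in 2\pi\ZZ$. Then Theorem~\ref{thm:complement} (applied with roles reversed — it is symmetric in $X$ and $\comp{X}$ since $\comp{\comp{X}} = X$, or one simply invokes Corollary~\ref{cor:joincomplement}) gives that $\comp{Z}$ admits proper LaFR between $a$ and $b$ at time $\tau$. Since $a$ and $b$ lie in the same part, say both in $V(X)$, and $\comp{Z} = \comp{X} \cup \comp{Y}$ is a disjoint union, the transition matrix $U_{\comp{Z}}(\tau)$ is block diagonal with respect to the components of $\comp{Z}$. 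The block containing both $a$ and $b$ is $U_{\comp{X}\text{-component}}(\tau)$ restricted to that component of $\comp{X}$, and the LaFR relation $U_{\comp{Z}}(\tau)e_a = \alpha e_a + \beta e_b$ with $\beta \ne 0$ reads entirely within this block. Hence that component of $\comp{Z}$ — equivalently the component of $\comp{X}$, hence of $\comp{Z}$, containing $a$ — admits proper LaFR between $a$ and $b$ at time $\tau$.

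The main obstacle is the first part, and specifically handling the degenerate cases where $X$ or $Y$ is a single vertex (so that $Z$ is a cone, and "twin" statements get delicate) and the balanced case $n_1 = n_2$. I would dispatch these by a direct computation of the relevant $2\times 2$ principal submatrix of $U(\tau)$ using the explicit eigenvalue $n$ and its eigenvector $n_2\one_X - n_1\one_Y$, checking that the off-diagonal entry $\beta$ is forced to be zero — contradicting properness — whenever $a$ and $b$ straddle the join.
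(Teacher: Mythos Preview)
Your second-part argument is essentially the paper's: pass to $\comp{Z}$ via Corollary~\ref{cor:joincomplement}, observe that $U_{\comp{Z}}(\tau)$ is block diagonal along components, and read off proper LaFR on the component containing $a$ and $b$.

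For the first part, however, you take a substantial detour. The paper uses \emph{exactly the same complement trick} for both halves: once Corollary~\ref{cor:joincomplement} gives proper LaFR between $a$ and $b$ on $\comp{Z}$, the fact that $\comp{Z}=\comp{X}\cup\comp{Y}$ has no edges between $V(X)$ and $V(Y)$ means its transition matrix is block diagonal with respect to this partition, so $U_{\comp{Z}}(\tau)_{ab}=\beta\ne 0$ already forces $a$ and $b$ to lie in the same component of $\comp{Z}$, hence both in $X$ or both in $Y$. That is the entire argument---two lines---and you already invoked the needed corollary in your second paragraph.

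Your spectral route (restrict $\Phi_a$ and $\Phi_b$ via the support structure of join eigenvectors, then squeeze $\Phi_a=\Phi_b\subseteq\{0,n\}$ and derive a contradiction) is not wrong in outline, but as you yourself flag, it runs into genuine case analysis: the eigenvalue $n$ can have extra eigenvectors supported on $V(X)$ or $V(Y)$ when $\comp{X}$ or $\comp{Y}$ is disconnected, the balanced case $n_1=n_2$ needs a separate $(F_n)_{aa}$ computation to finish, and the twin argument via Corollary~\ref{scdist} is awkward when one side is a single vertex. All of this can be pushed through, but it is work you do not need to do. The lesson is that the complement observation you used for the ``moreover'' clause already delivers the main clause for free.
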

\begin{proof}
By Corollary \ref{cor:joincomplement}, the complement of $Z$ admits LaFR between $a$ and $b$ at time $\tau$. Hence $a$ and $b$ are in the same component of $\comp{Z}$. It follows from the definition of a join graph that $a$ and $b$ are either both in $X$ or both in $Y$. The second statement is a consequence of Corollary \ref{cor:joincomplement}.
\end{proof}

Our theory enables us to build infinite families of join graphs that admit proper LaFR. Unlike those constructed in Theorem \ref{BasicDBCone}, these graphs do not have to be double cones. To start, we cite a spectral result from \cite{PSTLa}.

{\begin{lemma}\cite{PSTLa}
\label{lem:joines}
Let $X$ and $Y$ be two graphs on $m$ and $n$ vertices, respectively. Then $m+n$ is an eigenvalue of $X+Y$ with an eigenvector being
\[ \pmat{n\one_m\\-m\one_n}.\]
\end{lemma}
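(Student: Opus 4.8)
The plan is to write the Laplacian of the join in block form and apply it directly to the claimed eigenvector. In $X+Y$ every vertex of $X$ acquires $n$ new neighbours (all of $Y$) and every vertex of $Y$ acquires $m$ new neighbours, so, ordering the vertices of $X$ before those of $Y$,
\[L(X+Y) = \pmat{L(X) + nI_m & -J_{m\times n}\\ -J_{n\times m} & L(Y) + mI_n},\]
where $J_{m\times n}$ denotes the all-ones matrix; the rectangular blocks encode that $X$ and $Y$ are completely joined, while the diagonal blocks combine the internal Laplacians with the degree shift.

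Next I would compute $L(X+Y)$ applied to $v=\pmat{n\one_m\\-m\one_n}$ one block at a time, using $L(X)\one_m=0$, $L(Y)\one_n=0$, $J_{m\times n}\one_n=n\one_m$, and $J_{n\times m}\one_m=m\one_n$. The top block gives $(L(X)+nI_m)(n\one_m)-(-m)J_{m\times n}\one_n = n^2\one_m+mn\one_m=(m+n)\,n\one_m$, and the bottom block gives $-nJ_{n\times m}\one_m+(L(Y)+mI_n)(-m\one_n)=-nm\one_n-m^2\one_n=-(m+n)\,m\one_n$. Stacking these two results shows $L(X+Y)v=(m+n)v$, so $m+n$ is an eigenvalue of $X+Y$ with eigenvector $v$, as claimed.

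There is essentially no obstacle here: the only care needed is the bookkeeping of the degree shifts in the diagonal blocks and the correct orientation of the rectangular all-ones blocks. (Alternatively one could derive this from $X+Y=\comp{\comp X\cup\comp Y}$ together with the complement identity $\comp L=nI-J-L$ used in the proof of Theorem \ref{thm:complement}, but the direct block computation is the shortest route.)
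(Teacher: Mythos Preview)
Your proof is correct. The block form of $L(X+Y)$ is right, and the two block computations go through exactly as you wrote them (using $L(X)\one_m=0$ and $L(Y)\one_n=0$).

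Note that the paper does not actually prove this lemma: it is quoted from \cite{PSTLa} (``We cite a spectral result from \cite{PSTLa}''), so there is no in-paper argument to compare your approach against. Your direct block computation is the standard verification and is entirely appropriate here; the alternative you mention via $X+Y=\comp{\comp X\cup\comp Y}$ would also work but is indeed longer.
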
}

\ignore{\begin{lemma}\cite{PSTLa}
\label{lem:joines}
Let $X$ and $Y$ be two graphs on $m$ and $n$ vertices, respectively. Let the spectral decompositions of their Laplacian matrices be
\[L(X)=\sum_r \lambda_r E_r,\quad L(Y)=\sum_r\theta_r F_r.\]
Then the spectral decomposition of $L(X+Y)$ is
\[L(X+Y)=0\cdot z_0z_0^T + (m+n)z_1z_1^T +\sum_{r: \lambda_r\ne m} (n+\lambda_r)\pmat{E_r & 0 \\0&0} + \sum_{s:\theta_s\ne n} (m+\theta_s)\pmat{0&0\\0&F_s},\]
where $z_0$ and $z_1$ are vectors given by
\[z_0 = \frac{1}{\sqrt{m+n}} \one_{m+n},\quad z_1=\frac{1}{\sqrt{mn(m+n)}} \pmat{n\one_m\\-m\one_n}.\]
\end{lemma}}

We remark an immediate consequence of the structure of the eigenvectors of a join.

\begin{corollary}\label{cor:ninplus}
Let $X+Y$ be a join graph on $\ell$ vertices, and let $a$ and $b$ be two Laplacian strongly cospectral vertices.
\begin{enumerate}[(i)]
    \item If $a$ and $b$ both lie in $X$ or both lie in $Y$, then $\ell\in\Phi_{ab}^+$.
    \item If $a$ lies in $X$ and $b$ lies in $Y$, then $\ell\in\Phi_{ab}^-$.
\end{enumerate}
\end{corollary}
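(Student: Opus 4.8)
The plan is to use Lemma \ref{lem:joines}, which hands us an explicit eigenvector $w = \pmat{n\one_m \\ -m\one_n}$ of $X+Y$ for the eigenvalue $\ell = m+n$, where $m = \abs{V(X)}$ and $n = \abs{V(Y)}$. The point is simply to evaluate the sign pattern of $w$ on the pair $\{a,b\}$ and then invoke strong cospectrality. Concretely, for any eigenvalue $\mu_r$ and any eigenvector $v$ in its eigenspace, the definition of Laplacian strong cospectrality of $a$ and $b$ forces $v_a = \pm v_b$ whenever $F_r e_a \ne 0$ (and more carefully, one should argue that $\ell$ actually lies in the support $\Phi_a$ before talking about its class — see below).

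First I would dispose of the support question: I claim $\ell = m+n \in \Phi_a = \Phi_{ab}^+ \cup \Phi_{ab}^-$. Indeed, $F_{\ell}$ projects onto the $\ell$-eigenspace, which by Theorem \ref{Levbd} (the multiplicity of $n$ being one less than the number of components of $\comp{X+Y}$, and $\comp{X+Y} = \comp X \cup \comp Y$ disconnected) is spanned by vectors supported on the bipartition classes; in particular $w$ as above is such a vector, and $w_a \ne 0$ since every coordinate of $w$ is $\pm n$ or $\mp m$, both nonzero. Hence $(F_\ell e_a)$ has a nonzero component along $w$, so $F_\ell e_a \ne 0$ and $\ell \in \Phi_a$. (If the $\ell$-eigenspace is one-dimensional this is immediate; if it has higher dimension one argues with the orthogonal projection onto $\spn\{w\}$, exactly as in the proof of Theorem \ref{algclosed} and Lemma \ref{lem:minussetfactor}.)

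Next I would read off the sign. If $a$ and $b$ both lie in $X$, then $w_a = w_b = n$, so $w_a = +w_b$; since $w$ spans (a subspace of) the $\ell$-eigenspace and $a,b$ are strongly cospectral, this forces $F_\ell e_a = +F_\ell e_b \ne 0$, i.e. $\ell \in \Phi_{ab}^+$. The same holds if $a$ and $b$ both lie in $Y$, where $w_a = w_b = -m$. If instead $a \in V(X)$ and $b \in V(Y)$, then $w_a = n$ while $w_b = -m$, which have opposite signs (both $m,n \ge 1$); hence $F_\ell e_a = -F_\ell e_b \ne 0$, i.e. $\ell \in \Phi_{ab}^-$. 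This gives both parts (i) and (ii).

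The only genuine subtlety — the ``hard part'' — is the support argument in the first step, namely ruling out that $e_a$ happens to be orthogonal to the entire $\ell$-eigenspace when that eigenspace is larger than $\spn\{w\}$. The clean way around this is to note that strong cospectrality of $a$ and $b$ already partitions \emph{every} eigenvalue of $L(X+Y)$ into $\Phi^+_{ab}$, $\Phi^-_{ab}$, or $\Phi^0_{ab}$, and the sign-consistency argument of Theorem \ref{algclosed}/Lemma \ref{lem:keylemma} shows the sign is the same for \emph{all} eigenvectors in a given eigenspace; so it suffices to exhibit one eigenvector $v$ of $\ell$ with $v_a \ne 0$, and $w$ does the job. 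Thus $\ell \notin \Phi^0_{ab}$, and the sign of $w$ on $\{a,b\}$ decides which of $\Phi^\pm_{ab}$ it falls into. Everything else is bookkeeping.
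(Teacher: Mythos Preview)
Your proposal is correct and matches the paper's intended approach: the paper does not write out a proof at all, merely flagging the corollary as ``an immediate consequence of the structure of the eigenvectors of a join'' (i.e.\ of Lemma~\ref{lem:joines}), and your argument is precisely the natural way to cash this out. One small remark on exposition: in case~(ii) your phrase ``opposite signs \ldots\ hence $F_\ell e_a = -F_\ell e_b$'' is doing double duty---the actual logic is that strong cospectrality forces $w_a = \epsilon w_b$ for the common sign $\epsilon \in \{\pm 1\}$, and since $w_a = n > 0$ while $w_b = -m < 0$ the choice $\epsilon = +1$ is impossible (this incidentally forces $m=n$, but that is a consequence, not an input); you set this up correctly in case~(i), so the parallel reasoning is clear, but it would read more cleanly if stated explicitly.
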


We now show that any graph with proper LaFR at a special time yields an infinite family of graphs with LaFR.

\begin{theorem}\label{thm:infjoin}
Let $X$ be a graph on $m\ge 3$ vertices. Suppose for some factor $g$ of $m$, proper LaFR occurs on $X$ bewteen $a$ and $b$ at time $2\pi/g$. Let $Y$ be any graph on $n$ vertices such that $g$ divides $n$. Then the join graph $X+Y$ admits proper LaFR between $a$ and $b$ at time $2\pi/g$.
\end{theorem}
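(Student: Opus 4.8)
The plan is to obtain $X+Y$ from $X$ by two complementations together with a disjoint union, and to push proper LaFR through each step using (the proof of) Theorem~\ref{thm:complement}. Write $\ell=m+n=\abs{V(X+Y)}$ and $\tau=2\pi/g$. Since $g\mid m$ and $g\mid n$, also $g\mid\ell$, so $m\tau\in2\pi\ZZ$ and $\ell\tau\in2\pi\ZZ$; these are exactly the hypotheses needed below. Recall from the proof of Theorem~\ref{thm:complement} that for any graph $W$ on $N$ vertices with $Nt\in2\pi\ZZ$ one has $\exp(it\,\comp{L(W)})=\exp(-it\,L(W))$, i.e.\ $U_{\comp W}(t)=\overline{U_W(t)}$. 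Applying this with $W=X$, $N=m$, $t=\tau$ gives $U_{\comp X}(\tau)=\overline{U_X(\tau)}$; since $X$ has proper LaFR between $a$ and $b$ at $\tau$, say $U_X(\tau)e_a=\alpha e_a+\beta e_b$ with $\beta\ne0$, we conclude $U_{\comp X}(\tau)e_a=\overline\alpha\,e_a+\overline\beta\,e_b$, so $\comp X$ has proper LaFR between $a$ and $b$ at $\tau$.

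Next I would use that $\comp{X+Y}=\comp X\cup\comp Y$, so $L(\comp{X+Y})$ is block diagonal and $U_{\comp{X+Y}}(\tau)=U_{\comp X}(\tau)\oplus U_{\comp Y}(\tau)$. As $a$ and $b$ both lie in $V(\comp X)$, this yields $U_{\comp{X+Y}}(\tau)e_a=\overline\alpha\,e_a+\overline\beta\,e_b$, i.e.\ $\comp{X+Y}$ has proper LaFR between $a$ and $b$ at $\tau$. Finally, apply the identity above once more, now with $W=\comp{X+Y}$, $N=\ell$, $t=\tau$: since $\ell\tau\in2\pi\ZZ$ we get $U_{X+Y}(\tau)=U_{\comp{\comp{X+Y}}}(\tau)=\overline{U_{\comp{X+Y}}(\tau)}$, hence $U_{X+Y}(\tau)e_a=\alpha e_a+\beta e_b$ with $\beta\ne0$. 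Thus $X+Y$ has proper LaFR between $a$ and $b$ at time $2\pi/g$, as desired. One could instead invoke Corollary~\ref{cor:join} to get LaFR at $\tau$ and only track properness separately, but carrying $\alpha,\beta$ through both complementations is no longer.

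I do not expect a serious obstacle; the two points requiring care are purely bookkeeping. First, the time condition must hold at both complementation steps: $m\tau\in2\pi\ZZ$ uses the hypothesis $g\mid m$, while $\ell\tau\in2\pi\ZZ$ uses $g\mid m+n$, which is why we need \emph{both} $g\mid m$ and $g\mid n$. Second, one must check properness survives, and it does, because each step replaces $U$ by its entrywise conjugate or embeds it block-diagonally, neither of which can send a nonzero $\beta$ to $0$. As an alternative route, one could verify the three conditions of Theorem~\ref{CharThm} directly for $X+Y$: strong cospectrality of $a,b$ and the identification of $\Phi_{ab}^\pm(X+Y)$ follow from the spectral decomposition of a join, the key observation being that $g\mid m$ forces $m\notin\Phi_{ab}^-(X)$ (else $g$ both divides and fails to divide $m$), so the join eigenvalue $m+n$ joins $\Phi_{ab}^+(X+Y)$ and the gcd $g$ still misses the eigenvalue of $\Phi_{ab}^-(X+Y)$ coming from $\Phi_{ab}^-(X)$; this needs the full join spectral decomposition, so the complement argument is cleaner.
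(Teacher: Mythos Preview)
Your proof is correct and follows essentially the same route as the paper: push proper LaFR from $X$ to $\comp{X}$ via Theorem~\ref{thm:complement} (using $g\mid m$), then to the disjoint union $\comp{X}\cup\comp{Y}=\comp{X+Y}$, and finally back through a second complementation (using $g\mid m+n$). The paper's version is terser and does not explicitly track $\alpha,\beta$ to verify properness, but the argument is the same.
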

\begin{proof}
Since $g$ divides $m$, by Theorem \ref{thm:complement}, there is proper LaFR at time $2\pi/g$ between $a$ and $b$ on $\comp{X}$, and thus on $\comp{X}\cup \comp{Y}$. As $g$ also divides $n$, applying Theorem \ref{thm:complement} yields the result.
\end{proof}

\ignore{\begin{theorem}\label{thm:infjoin}
Let $X$ be a graph on $m\ge 3$ vertices. Suppose for some factor $g$ of $m$, proper LaFR occurs on $X$ between $a$ and $b$ at time $2\pi/g$. Let $Y$ be any graph on $n$ vertices such that $g$ divides $n$. Then the join graph $X+Y$ admits proper LaFR between $a$ and $b$, possibly earlier than $2\pi/g$.
\end{theorem}

\begin{proof}
Let $Z=X+Y$. Since proper LaFR occurs $X$, the vertices $a$ and $b$ are strongly cospectral with respect to $L(X)$. By Lemma \ref{lem:joines}, it is not hard to see that $a$ and $b$ are also strongly cospectral with respect to $L(Z)$. Moreover, we can partition the eigenvalues of $L(Z)$, that is,
\[\{0,m+n,\lambda_r+n, \theta_s+m\}\]
into three sets:
\begin{align*}
    \Phi_{ab}^+&= \{0,m+n\}\cup \{\lambda_r+n: E_r e_a = E_r e_b \ne 0\}\\
    \Phi_{ab}^-&=\{\lambda_r+n: E_r e_a = -E_r e_b\ne 0\}\\
    \Phi_{ab}^0&=\comp{\Phi_{ab}^+\cup \Phi_{ab}^-}.
\end{align*}
We now apply Theorem \ref{CharThm}, which characterizes proper LaFR, to both $X$ and $Z$. Let
\[h = \gcd\{\mu_r - \mu_s: \mu_r, \mu_s\in\Phi_{ab}^+ \text{ or } \mu_r, \mu_s\in \Phi_{ab}^-\}.\]
We claim that $h$ is an integer multiple of $g$. First note that, by Theorem \ref{CharThm}, $g$ divides the difference of any two eigenvalues elements in $\Phi_{ab}^-$ as $X$ admits proper LaFR. Next, since $0$ is an eigenvalue of $L(X)$ with eigenvector $\one$, any eigenvalue $\lambda_r$ of $L(X)$ such that $E_re_a = E_r e_b$ must be divisible by $g$. Finally, as we assumed that $g$ divides $m$ and $n$, it must divide all elements in $\Phi_{ab}^+$. It follows that $g$ divides $h$. Therefore, by Theorem \ref{CharThm}, $Z$ admits proper LaFR between $a$ and $b$ at time $2\pi/h$.
\end{proof}}

The rest of this section is devoted to LaFR on double cones. In Theorem \ref{BasicDBCone}, we showed that every double cone on $n\ge 4$ vertices have proper LaFR at time $2\pi/n$. Here, we prove that the converse is also true. Our result uses the following simple observation.

\begin{lemma}
Let $X$ be a graph on $n$ vertices. If $n$ is an eigenvalue of $L(X)$, then $X$ is a join graph.
\end{lemma}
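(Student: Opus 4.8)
The plan is to exploit the well-known fact that for a graph $X$ on $n$ vertices, the multiplicity of the Laplacian eigenvalue $n$ equals one less than the number of connected components of the complement $\comp{X}$ (this is exactly the "moreover" part of Theorem \ref{Levbd} cited above). So first I would invoke Theorem \ref{Levbd}: if $n \in \spec(L(X))$, then its multiplicity is at least $1$, hence the number of components of $\comp{X}$ is at least $2$, i.e.\ $\comp{X}$ is disconnected.

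Next I would translate disconnectedness of the complement into a join structure. If $\comp{X}$ is disconnected, write $\comp{X} = H_1 \cup H_2$ as a disjoint union of two (nonempty) graphs. Taking complements and using the identity $\comp{\comp{H_1}\cup\comp{H_2}} = \comp{H_1}+\comp{H_2}$ recorded earlier in Section \ref{sec:compjoin}, we get $X = \comp{\comp{X}} = \comp{H_1 \cup H_2} = \comp{H_1} + \comp{H_2}$, which exhibits $X$ as a join of two nonempty graphs. Hence $X$ is a join graph, as desired.

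I do not anticipate a genuine obstacle here: the entire argument is a one-line consequence of Theorem \ref{Levbd} together with the complement-of-disjoint-union identity for joins, both of which are already available in the excerpt. The only point requiring a small amount of care is making sure the two pieces $H_1, H_2$ of the disconnected complement are each nonempty so that "$X$ is a join graph" is nontrivially true — but that is automatic since a disconnected graph on $n$ vertices has at least two components, each with at least one vertex. (Strictly, one should also note $n \ge 1$; the case $n=1$ is vacuous since then $\comp{X}$ has one component and $n=0\notin\spec$ unless... in any event $n\ge 2$ whenever the hypothesis is meaningful, and for $n\ge 2$ the argument goes through verbatim.)
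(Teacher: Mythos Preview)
Your proposal is correct and matches the paper's approach exactly: the paper's proof is the single line ``By Theorem~\ref{Levbd}, $\comp{X}$ has at least two components,'' and you have simply unpacked this by spelling out why a disconnected complement forces $X$ to be a join. There is nothing to add or fix.
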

\begin{proof}
By Theorem \ref{Levbd}, $\comp{X}$ has at least two components. 
\end{proof}

With this, we are able to prove the converse of Theorem \ref{BasicDBCone}.

\begin{theorem}\label{DCTime}
Let $Z$ be a graph on $n\ge 3$ vertices that admit proper LaFR at time $2\pi/n$. Then $Z$ is a double cone.
\end{theorem}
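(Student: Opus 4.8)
The plan is to run everything through the Characterization Theorem \ref{CharThm}. Suppose $Z$ admits proper LaFR between vertices $a$ and $b$ at time $2\pi/n$. Then $a$ and $b$ are Laplacian strongly cospectral, $\Phi_{ab}^+\cup\Phi_{ab}^-\subseteq\ZZ^{\ge 0}$, and, with $g=\gcd\{\mu_r-\mu_s:\mu_r,\mu_s\in\Phi_{ab}^+\text{ or }\mu_r,\mu_s\in\Phi_{ab}^-\}$, the time $2\pi/n$ must be an integer multiple of $2\pi/g$; hence $n\mid g$, so in particular $g\ge n$. I would then combine this with two cheap facts: every Laplacian eigenvalue of a graph on $n$ vertices is at most $n$, and $0\in\Phi_{ab}^+$ with $|\Phi_{ab}^+|\ge 2$ by Corollary \ref{pmsetsize}. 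Since $0\in\Phi_{ab}^+$, $g$ divides every element of $\Phi_{ab}^+$; as these elements lie in $[0,n]$ and $g\ge n$, the only way to fit two of them in is $g=n$ and $\Phi_{ab}^+=\{0,n\}$.

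From $\Phi_{ab}^+=\{0,n\}$ I would extract the structure. First, $n$ is a Laplacian eigenvalue of $Z$, so $Z$ is connected — a disconnected graph on $n$ vertices has all components of order $<n$ and hence all Laplacian eigenvalues $<n$ — and then the lemma preceding this theorem (an application of Theorem \ref{Levbd}) shows $Z$ is a join graph. Second, $\Phi_{ab}^-\subseteq\{1,\dots,n-1\}$, and since $g=n$ divides every pairwise difference of elements of $\Phi_{ab}^-$ while such differences have absolute value at most $n-2$, the set $\Phi_{ab}^-$ is a singleton, say $\Phi_{ab}^-=\{\mu\}$ with $1\le\mu\le n-1$. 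By Corollary \ref{scdist}, $a$ and $b$ are twins, possibly adjacent.

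To finish I would apply the degree bounds of Theorem \ref{scdegbound} with $\lambda^+=\theta^+=n$ and $\lambda^-=\theta^-=\mu\in\{1,\dots,n-1\}$. In the adjacent case the bounds give $n-1\le\deg(a)\le\mu-1\le n-2$, a contradiction, so $a$ and $b$ are non-adjacent; in the non-adjacent case they give $n-2\le\deg(a)\le n-2$, forcing $\deg(a)=\deg(b)=n-2$ (and $\mu=n-2$). Since $a$ and $b$ are non-adjacent twins of degree $n-2$, their common neighbourhood is contained in $V(Z)\setminus\{a,b\}$ and has size $n-2$, hence equals all of $V(Z)\setminus\{a,b\}$. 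Therefore $Z=\comp{K_2}+(Z-a-b)$ is a double cone with conical vertices $a$ and $b$.

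I expect the only real friction to be bookkeeping rather than ideas: making sure $Z$ is shown connected before invoking Theorem \ref{Levbd} and Theorem \ref{scdegbound}, and carefully verifying that the two cases of the degree bound collapse as claimed (the adjacent case to a numerical contradiction, the non-adjacent case to the forced value $\deg(a)=n-2$). Everything else is a direct consequence of Theorem \ref{CharThm}, the eigenvalue upper bound $\mu_r\le n$, and the fact that $g$ divides all differences within each of $\Phi_{ab}^+$ and $\Phi_{ab}^-$.
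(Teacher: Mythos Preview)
Your argument is correct. Both you and the paper open identically: from Theorem \ref{CharThm} and the bound $\mu_r\le n$ you get $g=n$ and $\Phi_{ab}^+=\{0,n\}$. After that the two proofs diverge.

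The paper uses the join/complement machinery: once $n$ is a Laplacian eigenvalue, $Z$ is a join, and Corollary \ref{samepartition} pushes the proper LaFR at time $2\pi/n$ down to the connected component of $\comp{Z}$ containing $a,b$; if that component had $m\ge 3$ vertices then (again by Theorem \ref{CharThm} applied to an $m$-vertex graph) proper LaFR there could occur only at multiples of $2\pi/g'$ with $g'\le m<n$, contradicting the time $2\pi/n$. Hence $m=2$ and $Z$ is a double cone.

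You instead stay purely spectral: from $g=n$ and $\Phi_{ab}^-\subseteq\{1,\dots,n-1\}$ you force $|\Phi_{ab}^-|=1$, invoke Corollary \ref{scdist} to get that $a,b$ are twins, and then use the degree bounds of Theorem \ref{scdegbound} (with $\lambda^+=\theta^+=n$) to rule out the adjacent case and to pin $\deg(a)=n-2$ in the non-adjacent case, which directly exhibits $Z=\comp{K_2}+(Z-a-b)$. This route is self-contained (it does not need Corollaries \ref{cor:joincomplement} or \ref{samepartition}), and as a bonus it identifies the conical vertices as $a,b$ and yields $\Phi_{ab}^-=\{n-2\}$. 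Note that your detour through ``$Z$ is a join'' is in fact unnecessary for your argument; the twin-plus-degree computation already delivers the double cone. Your care in first checking connectedness before invoking Theorem \ref{Levbd} and Theorem \ref{scdegbound} is appropriate.
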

\begin{proof}
Define 
\[g = \gcd \left\{\mu_r-\mu_s:\mu_r, \mu_s \in \Phi_{ab}^+ \text{ or } \mu_r, \mu_s\in\Phi_{ab}^- \right\}.\]
By Theorem \ref{Levbd}, the eigenvalues $L(Z)$ are no greater than $n$, and as $0\in\Phi_{ab}^+$, we must have $g\le n$. On the other hand, Theorem \ref{CharThm} says that proper LaFR must occur at times that are integer multiples of $2\pi/g$. Hence $g=n$. Now, as $\Phi_{ab}^+$ contains at least two elements, it can only be that $\Phi_{ab}^+=\{0,n\}$. Therefore, $Z$ is a join graph.

By Corollary \ref{samepartition}, there is an induced subgraph $X$ of $Z$ containing $a$ and $b$ such that
\begin{enumerate}[(i)]
    \item $Z=X+Y$ for some induced subgraph $Y$ of $Z$, and
    \item $\comp{X}$ admits proper LaFR between $a$ and $b$ at time $2\pi/n$.
\end{enumerate}
We claim that $X$ has no other vertices than $a$ and $b$. Suppose otherwise, and let $m$ be the number of vertices in $X$. Since $m\ge 3$, by Theorem \ref{CharThm}, the earliest time when proper LaFR could occur on $\comp{X}$ is $2\pi/m$. However, as $m<n$, this contradicts the fact that $\comp{X}$ admits proper LaFR at time $2\pi/n$. Therefore, $m=2$ and $Z$ is indeed a double cone.
\end{proof}

We reach the same conclusion if the graph with proper LaFR has an prime number of vertices.

\begin{theorem}\label{thm:primedbcone}
Let $p$ be an odd prime. Let $X$ be a graph on $p$ vertices. Suppose proper LaFR occurs on $X$. Then $X$ is a double cone.
\end{theorem}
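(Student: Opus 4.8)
The plan is to show that proper LaFR on $X$ occurs at time $2\pi/p$, and then conclude by Theorem~\ref{DCTime}. Fix vertices $a,b$ between which proper LaFR occurs (here $X$ is connected, which is needed: $K_2\cup K_1$ shows the statement fails otherwise). By Theorem~\ref{LaFRSC}, $a$ and $b$ are Laplacian strongly cospectral, so $0\in\Phi_{ab}^+$ and $|\Phi_{ab}^+|\ge 2$ by Corollary~\ref{pmsetsize}; moreover the three conditions of Theorem~\ref{CharThm} hold, and in particular the integer $g$ of that theorem satisfies $g\ge 2$ (otherwise $g=1$ divides all of $\Phi_{ab}^-$, contradicting (iii)). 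Since $0\in\Phi_{ab}^+$, $g$ divides every element of $\Phi_{ab}^+$. Everything then reduces to proving $p\in\Phi_{ab}^+$: this gives $g\mid p$, hence $g=p$ as $g\ge 2$ and $p$ is prime, so by Theorem~\ref{CharThm} proper LaFR occurs at time $2\pi/g=2\pi/p$, and Theorem~\ref{DCTime} applies.

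To place $p$ in the spectrum I would use the Matrix-Tree Theorem together with the integrality from Theorem~\ref{CharThm}(ii). Let $q\ge 1$ be the number of spanning trees of $X$. By Theorem~\ref{mattree}, $pq$ equals the product of the nonzero Laplacian eigenvalues of $X$ with multiplicity. Grouping these eigenvalues via the partition $\Phi_a=\Phi_{ab}^+\cup\Phi_{ab}^-$, $\Phi_a^0=\Phi_{ab}^0$ (and $0\in\Phi_{ab}^+$) writes $pq=P_1P_2P_3$, where $P_1,P_2$ are the products over $\Phi_{ab}^+\setminus\{0\}$ and over $\Phi_{ab}^-$, and $P_3$ the product over $\Phi_{ab}^0$. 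By Theorem~\ref{CharThm}(ii), $P_1$ and $P_2$ are products of positive integers; by (the proof of) Theorem~\ref{lem:sptrees}, $P_3$ is a positive integer dividing $q$. Writing $q=P_3q'$ and cancelling $P_3$ gives $pq'=P_1P_2$, so $p\mid P_1P_2$. As $p$ is prime it divides some integer eigenvalue appearing in $P_1$ or $P_2$, and since every Laplacian eigenvalue of the connected graph $X$ is at most $p$ (Theorem~\ref{Levbd}), that eigenvalue must equal $p$. Hence $p\in\Phi_{ab}^+\cup\Phi_{ab}^-$; in particular $p$ is a Laplacian eigenvalue with $F_p\ne 0$.

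It remains to upgrade this to $p\in\Phi_{ab}^+$. By Theorem~\ref{Levbd}, the multiplicity of the eigenvalue $p$ equals one less than the number of components of $\comp{X}$, so $\comp{X}$ is disconnected and $X=X_1+X_2$ is a join. Since $p\ge 3$, Corollary~\ref{samepartition} forces $a$ and $b$ into the same part, and then Corollary~\ref{cor:ninplus}(i) gives $p=|V(X)|\in\Phi_{ab}^+$ (this also rules out the case $p\in\Phi_{ab}^-$, since $\Phi_{ab}^+$ and $\Phi_{ab}^-$ are disjoint). Hence $g=p$, proper LaFR occurs at time $2\pi/p$, and Theorem~\ref{DCTime} yields that $X$ is a double cone.

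I expect the middle paragraph to be the crux. The point is that divisibility $p\mid pq$ tells us nothing about individual eigenvalues unless those eigenvalues are integers --- for instance $C_5$, which has no proper LaFR, satisfies $5\mid 5q$ yet $5$ is not an eigenvalue --- so the argument genuinely relies on Theorem~\ref{CharThm}(ii) forcing the eigenvalues in $\Phi_{ab}^+\cup\Phi_{ab}^-$ to be integers. A secondary subtlety is that eigenvalues in $\Phi_{ab}^0$ may be irrational, which is why one peels off the factor $P_3$ using Theorem~\ref{lem:sptrees} before invoking primality. Once $p$ is known to be an eigenvalue, translating this into $p\in\Phi_{ab}^+$ via Corollaries~\ref{samepartition} and~\ref{cor:ninplus}, and finishing with Theorem~\ref{DCTime}, is routine.
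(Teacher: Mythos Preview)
Your proof is correct and follows essentially the same route as the paper: use the Matrix-Tree Theorem together with Theorem~\ref{lem:sptrees} and the integrality of $\Phi_a$ from Theorem~\ref{CharThm}(ii) to force $p\in\Phi_a$, deduce that $X$ is a join, then invoke Corollary~\ref{cor:ninplus} to get $p\in\Phi_{ab}^+$, conclude $g=p$, and finish with Theorem~\ref{DCTime}. Your write-up is in fact a bit more careful than the paper's in two places: you explicitly use Corollary~\ref{samepartition} to place $a$ and $b$ in the same part of the join before applying Corollary~\ref{cor:ninplus}(i), and you handle the multiplicity issue in the factorization $pq=P_1P_2P_3$ (peeling off $P_3$ via the proof of Theorem~\ref{lem:sptrees}) more explicitly than the paper, which simply asserts that $p$ divides $\prod(\Phi_a\setminus\{0\})$.
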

\begin{proof}
Suppose $X$ admits proper LaFR between vertices $a$ and $b$. Let $q$ be the number of spanning trees of $X$. By Lemma \ref{lem:sptrees}, $q$ is divisible by the product of elements in $\Phi_a^0$. On the other hand, the Matrix-Tree Theorem states that $pq$ equals the product of non-zero eigenvalues of $L(X)$. Hence $p$ divides
\[\prod_r \Phi_a \backslash\{0\}.\]
As $p$ is a prime, it lies in $\Phi_a$ and in particular, it is an eigenvalue of $L(X)$. Therefore, $X$ is a join graph.

By Theorem \ref{CharThm}, the earliest time when proper LaFR occurs is $2\pi/g$, where
\[g = \gcd \left\{\mu_r-\mu_s:\mu_r, \mu_s \in \Phi_{ab}^+ \text{ or } \mu_r, \mu_s\in\Phi_{ab}^- \right\}.\]
From Corollary \ref{cor:ninplus} we see that $p\in\Phi_{ab}^+$, and so $g$ divides $p$. However $g\neq 1$. It follows that $g=p$, and by Theorem \ref{DCTime}, $X$ is a double cone.
\end{proof}

We noticed, from the data on small graphs, that most examples with proper LaFR are double cones. This is partially explained by the above two results.

\section{Polygamy of proper fractional revival}
It is known that perfect state transfer, with respect to either the Laplacian matrix or the adjacency matrix, is \textsl{monogamous}: if a graph admits perfect state transfer from $a$ to $b$ and from $a$ to $c$, then $b=c$. In contrast, we show that LaFR can be \textsl{polygamous}: a vertex may be paired with two different vertices for proper LaFR. The following trick enables us to construct an infinite family of examples.

\begin{lemma}\label{lem:polygamy}
Let $X$ and $Y$ be two graphs on at least three vertices. Suppose $X$ admits proper LaFR from $a$ to $b$ at time $2\pi/g$, and $Y$ admits proper LaFR from $c$ to $d$ at time $2\pi/h$, where $g$ and $h$ are the gcds defined in Theorem \ref{CharThm}. Let 
\[G=\gcd\Phi_a,\quad H=\gcd\Phi_c.\]
Further assume that $G$ is not divisible by $\gcd(g,H)$, and $H$ is not divisible by $\gcd(h,G)$. Then $X\cart Y$ admits LaFR from $(a,c)$ to $(b,c)$, from $(a,d)$ to $(b,d)$, from $(a,c)$ to $(a,d)$, and from $(b,c)$ to $(b,d$).
\end{lemma}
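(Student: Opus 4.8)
The plan is to deduce everything from Theorem~\ref{thm:cartproduct} by exhibiting, for each of the four pairs, a time at which one factor is periodic at the shared coordinate while the other factor has proper LaFR between the two coordinates that differ.

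\emph{Setup.} Since proper LaFR occurs on $X$ between $a$ and $b$, Theorem~\ref{LaFRSC} makes $a,b$ Laplacian strongly cospectral, so $\Phi_a=\Phi_b=\Phi_{ab}^+\cup\Phi_{ab}^-$ and hence $\gcd\Phi_b=\gcd\Phi_a=G$; similarly $\Phi_c=\Phi_d$ and $\gcd\Phi_d=H$. From Theorem~\ref{CharThm} applied to $X$ I record: $0\in\Phi_{ab}^+$; every element of $\Phi_a$ is a nonnegative integer; $g$ divides $\mu_r-\mu_s$ whenever $\mu_r,\mu_s$ lie in a common class $\Phi_{ab}^\pm$ (so, using $0\in\Phi_{ab}^+$, every element of $\Phi_{ab}^+$ is divisible by $g$); and some element of $\Phi_{ab}^-$ is not divisible by $g$. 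The mirror statements hold for $Y$, $c$, $d$, $h$. I also note the elementary periodicity fact that for every integer $k$ one has $U_X(2\pi k/G)\,e_a=e_a$ and $U_X(2\pi k/G)\,e_b=e_b$, since $G$ divides every eigenvalue in $\Phi_a=\Phi_b$ and $\sum_{r}F_re_a=e_a$; likewise $U_Y(2\pi k/H)$ fixes $e_c$ and $e_d$.

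\emph{The pairs $(a,c)\to(b,c)$ and $(a,d)\to(b,d)$.} Put $\tau_1=2\pi/\gcd(g,H)$. Because $\gcd(g,H)\mid H$, the time $\tau_1$ is an integer multiple of $2\pi/H$, so $Y$ is periodic at $c$ and at $d$ at time $\tau_1$. Because $\gcd(g,H)\mid g$, the function $\mu_r\mapsto e^{\ii\tau_1\mu_r}$ is constant on $\Phi_{ab}^+$ (with value $1$, as $0\in\Phi_{ab}^+$) and constant on $\Phi_{ab}^-$; substituting this into $U_X(\tau_1)=\sum_r e^{\ii\tau_1\mu_r}F_r$ and invoking Lemma~\ref{lem:keylemma} exactly as in the proof of Theorem~\ref{CharThm} shows that $U_X(\tau_1)$ has the LaFR block form between $a$ and $b$. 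This LaFR is \emph{proper}: were it not, the common value of $e^{\ii\tau_1\mu_s}$ over $\mu_s\in\Phi_{ab}^-$ would equal $1$, i.e.\ $\gcd(g,H)$ would divide every element of $\Phi_{ab}^-$; combined with the fact that $\gcd(g,H)$ divides every element of $\Phi_{ab}^+$, this would force $\gcd(g,H)\mid\gcd\Phi_a=G$, contradicting the hypothesis that $\gcd(g,H)$ does not divide $G$. Applying Theorem~\ref{thm:cartproduct} with the two factors interchanged (via $X\cart Y\cong Y\cart X$, whose constructive "if" direction produces LaFR between exactly the vertices with the shared coordinate fixed) now gives proper LaFR on $X\cart Y$ from $(a,c)$ to $(b,c)$ and from $(a,d)$ to $(b,d)$, both at time $\tau_1$.

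\emph{The pairs $(a,c)\to(a,d)$ and $(b,c)\to(b,d)$, and the main obstacle.} These follow from the symmetric argument with $\tau_2=2\pi/\gcd(h,G)$: since $\gcd(h,G)\mid G$, $X$ is periodic at $a$ and at $b$ at time $\tau_2$; and the hypothesis that $\gcd(h,G)$ does not divide $H$ forces, by the same reasoning with the roles of $X$ and $Y$ swapped, the LaFR of $U_Y(\tau_2)$ between $c$ and $d$ to be proper, whereupon Theorem~\ref{thm:cartproduct} applies directly. The one genuinely delicate point, and the step I expect to require the most care, is this properness verification: translating the arithmetic hypothesis (that $\gcd(g,H)$ does not divide $G$) into the assertion that the phase on $\Phi_{ab}^-$ is nontrivial at $\tau_1$, so that the block form does not collapse to mere periodicity. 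Everything else is bookkeeping with the spectral decomposition, Lemma~\ref{lem:keylemma}, and the identity $U_{X\cart Y}(t)=U_X(t)\otimes U_Y(t)$.
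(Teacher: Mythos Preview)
Your proof is correct and follows exactly the paper's approach: use the two times $\tau_1=2\pi/\gcd(g,H)$ and $\tau_2=2\pi/\gcd(h,G)$, argue that one factor is periodic while the other has proper LaFR at each time, and invoke Theorem~\ref{thm:cartproduct}. The paper's own proof is a terse two-line appeal to Theorems~\ref{CharThm} and~\ref{LaPerCharThm}, whereas you have spelled out in full the properness verification (that $\gcd(g,H)\nmid G$ forces the phase on $\Phi_{ab}^-$ to be nontrivial at $\tau_1$), which is precisely the step the paper leaves implicit.
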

\begin{proof}
By Theorem \ref{CharThm} and Theorem \ref{LaPerCharThm}, the following phenomena occur.
\begin{enumerate}[(i)]
    \item At time $2\pi/\gcd(g,H)$, $X$ has proper LaFR between $a$ and $b$, and $Y$ is Laplacian periodic at $c$ and $d$.
    \item At time $2\pi/\gcd(h,G)$, $X$ is Laplacian periodic at $a$ and $b$, and $Y$ has proper LaFR between $c$ and $d$.
\end{enumerate}
The result now follows from Theorem \ref{thm:cartproduct}.
\end{proof}

We use this lemma to build an infinite family of graphs on which LaFR is polygamous. Our construction involves two types of distance regular graphs: Hadamard graphs, and distance regular double covers of complete graphs. The eigenvalues of both types of graphs can be found in the table of \cite[Sec 3]{FRschemes}. Since these graphs are regular, fractional revival with respect to the Laplacian matrix is equivalent to fractional revival with respect to the adjacency matrix.

Given an $n\times n$ Hadamard matrix $H$, we define $4n$ symbols $r_i^+, r_i^-, c_i^+, c_i^-$, where $i=1,2,\cdots, n$. The \textsl{Hadamard graph} is a graph with these symbols as vertices, such that $r_i^{\pm}$ is adjacent to $r_j^{\pm}$ if $H_{ij}=1$, and $r_i^{\pm}$ is adjacent to $r_j^{\mp}$ if $H_{ij}=-1$. It is shown in \cite{FRschemes} that the Hadamard graph has proper fractional revival between antipodal vertex $a$ and $b$ with
\[\Phi_{ab}^+=\{0,n^2,2n^2\},\quad \Phi_{ab}^-=\{n^2-n,n^2+n\}.\]

In \cite{FRschemes}, the author also characterized fractional revival on distance regular double cover of $K_m$. If $m=4p^2$ for some odd prime $p$ and $\delta=2$, then the double cover has proper LaFR between the antipodal vertices $c$ and $d$ with
\[\Phi_{cd}^+=\{0,4p^2\},\quad \Phi_{cd}^-=\{4p^2-2-2p,\, 4p^2-2+2p\}.\]

It is known that there exist distance regular antipodal covers of $K_{36}$ with $\delta=2$. Thus we have the following.

\begin{theorem}\label{thm:polygamy}
Let $n=6q$ for some odd positive integer $q$. Let $X$ be a Hadamard graph on $4n^2$ vertices. Let $Y$ be a distance regular double cover of $K_{36}$ with $\delta=2$. If $a$ and $b$ are antipodal vertices in $X$, and $c$ and $d$ are antipodal vertices in $Y$, then the following occur.
\begin{enumerate}[(i)]
    \item $X\cart Y$ admits LaPST from $(a,c)$ to $(b,c)$ and from $(a,d)$ to $(b,d)$ at time $\pi/2$.
    \item $X\cart Y$ admits proper LaFR from $(a,c)$ to $(a,d)$ and from $(b,c)$ to $(b,d)$ at time $\pi/3$.
\end{enumerate}
\end{theorem}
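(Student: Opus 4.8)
The plan is to verify the hypotheses of Lemma~\ref{lem:polygamy} for the specific graphs $X$ and $Y$, and then read off the two stated conclusions from the structure of $U_{X\cart Y}(t)$ at the relevant times. First I would record the eigenvalue data. For the Hadamard graph $X$ on $4n^2$ vertices with $n=6q$, the antipodal pair $a,b$ has $\Phi_{ab}^+=\{0,n^2,2n^2\}$ and $\Phi_{ab}^-=\{n^2-n,\,n^2+n\}$. The gcd $g$ from Theorem~\ref{CharThm} is $\gcd\{n^2,\,2n^2,\,2n\}=2n$, and the full eigenvalue support gives $G=\gcd\Phi_a=\gcd\{n^2,\,2n^2,\,n^2-n,\,n^2+n\}=\gcd(n^2,\,n)=n$ (using that $n$ is even, so $n\mid n^2-n$, and $\gcd(n^2,2n)=2n$ but including $n^2\pm n$ pulls it down to $n$; one checks $n\mid$ every element and $n^2+n-(n^2-n)=2n$ so the gcd divides $2n$, and since $n\nmid$... actually $\gcd=n$ because $n^2-n$ and $n$ are both multiples of $n$ and $n^2 = n\cdot n$). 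For the double cover $Y$ of $K_{36}$ with $\delta=2$ we have $m=36=4p^2$ with $p=3$, so $\Phi_{cd}^+=\{0,36\}$ and $\Phi_{cd}^-=\{36-2-6,\,36-2+6\}=\{28,40\}$; hence $h=\gcd\{36,\,40-28\}=\gcd(36,12)=12$, and $H=\gcd\Phi_c=\gcd\{36,28,40\}=4$.

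Next I would check the two divisibility conditions of Lemma~\ref{lem:polygamy}. We need $G$ not divisible by $\gcd(g,H)$ and $H$ not divisible by $\gcd(h,G)$. Here $\gcd(g,H)=\gcd(2n,4)=4$ (since $4\mid 2n$ as $n$ is even, indeed $n=6q$ so $2n=12q$ and $\gcd(12q,4)=4$), and $G=n=6q$; since $4\nmid 6q$ for $q$ odd, the first condition holds. Likewise $\gcd(h,G)=\gcd(12,6q)=6$ (as $q$ is odd, $\gcd(12,6q)=6$), and $H=4$; since $6\nmid 4$, the second condition holds. With both conditions verified, Lemma~\ref{lem:polygamy} immediately gives proper LaFR on $X\cart Y$ from $(a,c)$ to $(a,d)$ and from $(b,c)$ to $(b,d)$; by Theorem~\ref{CharThm} applied inside the proof of that lemma, the relevant time is $2\pi/\gcd(h,G)=2\pi/6=\pi/3$, which is conclusion~(ii).

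For conclusion~(i), I would argue that the LaFR on $X\cart Y$ arising along the $X$-direction at time $2\pi/\gcd(g,H)=2\pi/4=\pi/2$ is in fact LaPST. By the proof of Lemma~\ref{lem:polygamy}, at $t=\pi/2$ the graph $X$ has proper LaFR between $a$ and $b$ while $Y$ is periodic at $c$ and at $d$, so by Theorem~\ref{thm:cartproduct} the product has proper LaFR from $(a,c)$ to $(b,c)$ and from $(a,d)$ to $(b,d)$. To upgrade this to LaPST I would compute $U_X(\pi/2)e_a$ directly from the spectral formula: it equals $e^{0}F_{+}e_a + e^{i(\pi/2)(n^2-n)}F_{-}e_a$-type sum where, because $g=2n$ and the two elements of $\Phi_{ab}^-$ are $n^2\pm n$, the phase $e^{it\mu}$ on $\Phi_{ab}^-$ at $t=\pi/2$ is $e^{i\pi(n^2\pm n)/2}$; since $n=6q$ is even, $n^2\pm n$ is even, and in fact $n^2-n = n(n-1)$ with $n\equiv 0\pmod 4$ or $n\equiv 2\pmod 4$ — with $n=6q$, $q$ odd, we get $n\equiv 2\pmod 4$, so $n^2-n\equiv 2\cdot 1\equiv 2\pmod 4$, giving $e^{i\pi(n^2-n)/2}=e^{i\pi}=-1$. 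Meanwhile the $\Phi_{ab}^+$ phase is $e^{i\pi\cdot 0}=1$ on the $0$-eigenvalue and one checks the other positive eigenvalues contribute consistently, so $U_X(\pi/2)e_a = \tfrac12(e_a+e_b) - \tfrac12(e_a-e_b) = e_b$, i.e. LaPST on $X$. Then $U_{X\cart Y}(\pi/2)(e_a\otimes e_c) = e_b\otimes e_c$ and similarly with $d$, yielding (i). The main obstacle I anticipate is exactly this last modular-arithmetic bookkeeping: one must be careful that the parity and mod-$4$ residues of $n=6q$ (with $q$ odd) make the $\Phi_{ab}^-$ phases equal to $-1$ at $t=\pi/2$ and the $\Phi_{ab}^+$ phases equal to $1$, so that $\alpha=0$ and the fractional revival degenerates to perfect state transfer; a sign slip here would give proper LaFR rather than LaPST.
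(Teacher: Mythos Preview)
Your proposal is correct and follows essentially the same route as the paper: compute $g=2n=12q$, $h=12$, $G=n=6q$, $H=4$, check the two non-divisibility hypotheses of Lemma~\ref{lem:polygamy}, and read off the times $2\pi/\gcd(g,H)=\pi/2$ and $2\pi/\gcd(h,G)=\pi/3$ from the proof of that lemma. The paper's own proof stops after invoking Lemma~\ref{lem:polygamy}, whereas you go further and verify directly that the $X$-direction fractional revival at $t=\pi/2$ is actually LaPST (phases $+1$ on $\Phi_{ab}^+$ and $-1$ on $\Phi_{ab}^-$); this extra step is genuinely needed, since Lemma~\ref{lem:polygamy} only promises proper LaFR, and your mod-$4$ bookkeeping (using $n=6q\equiv 2\pmod 4$ so that $(n^2\pm n)/2$ is odd while $n^2/2$ is even) is correct. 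Your exposition of the computation of $G$ is a bit tangled, but the value $G=n$ is right; you may want to streamline that paragraph, and also explicitly record that all three elements of $\Phi_{ab}^+$ give phase $1$ at $t=\pi/2$ (you only state it for $\mu=0$).
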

\begin{proof}
Define
\begin{align*}
    g &= \gcd \left\{\mu_r-\mu_s:\mu_r, \mu_s \in \Phi_{ab}^+ \text{ or } \mu_r, \mu_s\in\Phi_{ab}^- \right\}\\
    h &= \gcd \left\{\mu_r-\mu_s:\mu_r, \mu_s \in \Phi_{cd}^+ \text{ or } \mu_r, \mu_s\in\Phi_{cd}^- \right\}
\end{align*}
and 
\[G=\gcd\Phi_a,\quad H=\gcd\Phi_c.\]
Then 
\[g=12q,\quad h=12,\quad G=6q,\quad H=4.\]
Thus $\gcd(g,H)$ does not divide $G$, and $\gcd(h, G)$ does not divide $H$. Now apply Lemma \ref{lem:polygamy}.
\end{proof}

We remark that, as these graphs are regular, they are also the first infinite family of unweighted graphs that admit polygamous adjacency fractional revival. This answers an open question in \cite{FRgraphs}.

\section*{Acknowledgement}
This project was completed under the Fields Undergraduate Summer Research Program.  Zhan acknowledges the support of the York Science Fellow program.
The authors would like to thank Gabriel Coutinho, Chris Godsil and Christino Tamon for useful discussions.

%%%%%%%%%%%%%%%%%%%%%%%%%%%%%%%%%%%%%%%%%%%%%%%%%%%%%%%%%%%%%%%%%%%%%%%%%%%%%
%%%%%%%%%%%%%%%%%%%%%%%%%%%%%%%%%%%%%%%%%%%%%%%%%%%%%%%%%%%%%%%%%%%%%%%%%%%%%
%%%%%%%%%%%%%%%%%%%%%%%%%%%%%%%%%%%%%%%%%%%%%%%%%%%%%%%%%%%%%%%%%%%%%%%%%%%%%
%%%%%%%%%%%%%%%%%%%%%%%%%%%%%%%%%%%%%%%%%%%%%%%%%%%%%%%%%%%%%%%%%%%%%%%%%%%%%

%%%%%%%%%%%%%%%%%%%%%%%%%%%%%%%%%%%%%%%%%%%%%%%%%%%%%%%%%%%%%%%%%%%%%%%%%%%%%
%%%%%%%%%%%%%%%%%%%%%%%%%%%%%%%%%%%%%%%%%%%%%%%%%%%%%%%%%%%%%%%%%%%%%%%%%%%%%
%%%%%%%%%%%%%%%%%%%%%%%%%%%%%%%%%%%%%%%%%%%%%%%%%%%%%%%%%%%%%%%%%%%%%%%%%%%%%
%%%%%%%%%%%%%%%%%%%%%%%%%%%%%%%%%%%%%%%%%%%%%%%%%%%%%%%%%%%%%%%%%%%%%%%%%%%%%

\end{document}